\begin{document}
\input xy
\xyoption{all}

\newcommand{\crl}{{\mathscr L}}
\newcommand{\bbz}{\mathbb{Z}}
\renewcommand{\mod}{\operatorname{mod}\nolimits}
\newcommand{\proj}{\operatorname{proj}\nolimits}
\newcommand{\rad}{\operatorname{rad}\nolimits}
\newcommand{\soc}{\operatorname{soc}\nolimits}
\newcommand{\ind}{\operatorname{ind}\nolimits}
\newcommand{\Top}{\operatorname{top}\nolimits}
\newcommand{\ann}{\operatorname{Ann}\nolimits}
\newcommand{\id}{\operatorname{id}\nolimits}
\newcommand{\Mod}{\operatorname{Mod}\nolimits}
\newcommand{\End}{\operatorname{End}\nolimits}
\newcommand{\Ob}{\operatorname{Ob}\nolimits}
\newcommand{\Ht}{\operatorname{Ht}\nolimits}
\newcommand{\cone}{\operatorname{cone}\nolimits}
\newcommand{\rep}{\operatorname{rep}\nolimits}
\newcommand{\Ext}{\operatorname{Ext}\nolimits}
\newcommand{\Hom}{\operatorname{Hom}\nolimits}
\renewcommand{\Im}{\operatorname{Im}\nolimits}
\newcommand{\Ker}{\operatorname{Ker}\nolimits}
\newcommand{\Coker}{\operatorname{Coker}\nolimits}
\renewcommand{\dim}{\operatorname{dim}\nolimits}
\newcommand{\Ab}{{\operatorname{Ab}\nolimits}}
\newcommand{\Coim}{{\operatorname{Coim}\nolimits}}
\newcommand{\pd}{\operatorname{pd}\nolimits}
\newcommand{\sdim}{\operatorname{sdim}\nolimits}
\newcommand{\add}{\operatorname{add}\nolimits}
\newcommand{\cc}{{\mathcal C}}
\newcommand{\crc}{{\mathscr C}}
\newcommand{\crd}{{\mathscr D}}
\newcommand{\co}{{\mathcal O}}
\newcommand{\cv}{{\mathcal V}}
\newcommand{\cs}{{\mathcal S}}

\newcommand{\coh}{\operatorname{coh}\nolimits}
\newcommand{\vect}{\operatorname{vect}\nolimits}
\newcommand{\can}{\operatorname{can}\nolimits}
\newcommand{\rk}{\operatorname{rk}\nolimits}
\newcommand{\st}{[1]}
\newcommand{\X}{\mathbb{X}}
\newcommand{\Z}{\mathbb{Z}}
\newcommand{\cp}{\mathcal{P}}
\newcommand{\ci}{\mathcal{I}}
\newcommand{\bbl}{\mathbb{L}}
\newcommand{\ul}{\underline}
\newcommand{\vx}{\vec{x}}
\newcommand{\vy}{\vec{y}}
\newcommand{\vz}{\vec{z}}
\newcommand{\vc}{\vec{c}}
\newcommand{\vw}{\vec{\omega}}
\newcommand{\bx}{\bar{x}}
\newcommand{\cub}{\operatorname{cub}\nolimits}

\newtheorem{theorem}{Theorem}[section]
\newtheorem{acknowledgement}[theorem]{Acknowledgement}
\newtheorem{algorithm}[theorem]{Algorithm}
\newtheorem{axiom}[theorem]{Axiom}
\newtheorem{case}[theorem]{Case}
\newtheorem{claim}[theorem]{Claim}
\newtheorem{conclusion}[theorem]{Conclusion}
\newtheorem{condition}[theorem]{Condition}
\newtheorem{conjecture}[theorem]{Conjecture}
\newtheorem{corollary}[theorem]{Corollary}
\newtheorem{criterion}[theorem]{Criterion}
\newtheorem{definition}[theorem]{Definition}
\newtheorem{example}[theorem]{Example}
\newtheorem{exercise}[theorem]{Exercise}
\newtheorem{lemma}[theorem]{Lemma}
\newtheorem{notation}[theorem]{Notation}
\newtheorem{proposition}[theorem]{Proposition}
\newtheorem{remark}[theorem]{Remark}
\newtheorem{solution}[theorem]{Solution}
\newtheorem{summary}[theorem]{Summary}
\newtheorem*{thma}{Theorem}

\newtheorem*{Riemann-Roch Formula}{Riemann-Roch Formula}

\newtheorem*{question}{Question}

\numberwithin{equation}{section}

\def\lra{\longrightarrow}
\def\wtd{\widetilde}
\def\cT{{\mathcal T}}
\def\ca{\mathcal A}

\def\bbX{{\mathbb X}}
\def\bbF{{\mathbb F}}
\def\bbH{{\mathbb H}}
\def\bbZ{{\mathbb Z}}
\def\bbN{{\mathbb N}}
\def\bbL{{\mathbb L}}
\def\bbK{{\mathbb K}}
\def\bbQ{{\mathbb Q}}
\def\bbC{{\mathbb C}}
\def\bbP{{\mathbb P}}
\def\bbO{{\mathbb O}}
\def\bbT{{\mathbb T}}
\def\lz{\lambda}
\def\rmx{{\rm x}}
\def\dz{\delta} \def\bfk{{\mathbf k}}


\title[Hall Polynomials for Weighted projective lines]
{Hall Polynomials for Weighted projective lines}

\author[Jiayi Chen, Bangming Deng and Shiquan Ruan]{Jiayi Chen, Bangming Deng and Shiquan Ruan}
\address{School of Mathematical Sciences, Xiamen University, Xiamen 361005, P.R.China}
\email{jiayichen.xmu@foxmail.com}
\address{Department of Mathematical Sciences, Tsinghua University, Beijing 100084, P.R.China}
\email{bmdeng@tsinghua.edu.cn}
\address{School of Mathematical Sciences, Xiamen University, Xiamen 361005, P.R.China}
\email{sqruan@xmu.edu.cn}

\subjclass[2020]{17B37, 16G20, 18G60} \keywords{Hall polynomial; Hall algebra; coherent sheaf; extension bundle; Weighted projective
line.}

\dedicatory{Dedicated to Professor Claus Michael Ringel on the occasion of his 80th birthday}

\begin{abstract}
This paper deals with the triangle singularity defined by the \linebreak equation
$f=X_1^{p_1}+X_2^{p_2}+X_3^{p_3}$ for weight triple $(p_1,p_2,p_3)$, as well as
the category of coherent sheaves over the weighted projective line $\bbX$ defined by $f$.
We calculate Hall polynomials associated to extensions bundles, line bundles
and torsion sheaves over $\bbX$. By using derived equivalence,
this provides a unified conceptual method for calculating
Hall polynomials for representations of tame quivers obtained by
Sz\'ant\'o and Sz\"oll\H{o}si [J. Pure Appl. Alg. {\bf 228} (2024)].

\end{abstract}

\date{\today}

\maketitle

\section{Introduction}

The classical Hall algebra was first studied by Steinitz \cite{St}
and later by Hall \cite{Ha} in the context of finite abelian $p$-groups. It turns out
that this algebra is isomorphic to the ring of symmetric functions and plays an
important role in the study of representations of symmetric groups and general linear
groups; see \cite{Mac}. In 1990, Ringel \cite{Rin1} introduced the Hall algebra
$H(R)$ of a finitary ring $R$, which is the
free abelian group with basis the set of isomorphism classes of finite
$R$-modules, and the structure constants are given by the so-called Hall numbers
$$F_{M,N}^L:=|\{X\subseteq L\mid X\cong N,\,L/X\cong M\}|.$$
 where $L,M,N$ are finite $R$-modules. Ringel then proved that the Hall algebra of
a finite dimensional representation-finite hereditary algebra over a finite field $\bfk$
is isomorphic to the positive part of the corresponding quantum group.
By defining a bialgebra structure on the Hall algebra $H(R)$ of an arbitrary finite
dimensional hereditary algebra $R$, Green \cite{Gr} proved that the composition subalgebra
of $H(R)$ generated by simple modules gives a realization of the positive part of the
corresponding quantum group.

In \cite{Rin0}, Ringel showed that Hall numbers for representation-directed algebras
over finite fields $\bfk$ are actually integer polynomials in the cardinality $|\bfk|$,
called Hall polynomials, and he further calculated Hall polynomials for three indecomposable
modules over a representation-finite hereditary algebra in \cite{Rin2}. Since then,
much subsequent work is concerned with the existence of Hall polynomials and explicit
formulas of certain Hall polynomials. For example, the existence
of Hall polynomials for representations of tame quivers was studied in
\cite{Rin3, Guo, Zh-GL, Hub, DR17, DH}, and Hall polynomials for some special classes
of representations of tame quivers were calculated in \cite{Zhang, Sz0,Sz1,SS1,SS2}.

We remark that Hall polynomials have significant applications in the study of cluster
algebras, as well as for counting rational points of quiver moduli
and for constructing canonical bases of affine quantum groups;
see, e.g., \cite{CC, CK, Rein, CR, Sz1, DDX, XXuZh, XXu}.

The notions of weighted projective lines and their coherent sheaf categories were introduced by
Geigle and Lenzing \cite{GL} in order to provide geometric counterparts to canonical
algebras and their module categories studied by Ringel \cite{R84}.
The Hall algebra of a weighted projective line was studied by Schiffmann \cite{Sch1} and
further by Dou--Jiang--Xiao in connection with quantum loop algebras. The existence of Hall
polynomials for coherent sheaves over a weighted projective line of domestic type was proved
in \cite{DR17}.

It is well-known that the category of representations of a tame quiver and that of
coherent sheaves over the associated weighted projective line bear many similarities.
For example, they both are hereditary abelian categories and have equivalent bounded
derived categories. It seems to us that it is sometimes convenient
to work with weighted projective lines instead of tame quivers since the category
of coherent sheaves admits a natural torsion pair consisting of
vector bundles and torsion sheaves, respectively.

In this paper we mainly consider weighted projective lines $\X$ of
type $(p_1,p_2,p_3)$ over a finite field $\bfk$ of $q$ elements and aim
to calculate Hall polynomials associated with line bundles,
extension bundles (i.e., indecomposable bundles of rank $2$) and torsion sheaves over $\X$.
As applications, we obtain some Hall polynomials for representations of tame quivers.

We first consider an easy case where the Hall numbers only involve line
bundles and torsion sheaves over an arbitrary weighted projective line $\X$.
Next we treat the main case where $\X$ is a weighted projective line of
type $(p_1,p_2,p_3)$ and an extension bundle arises as an extension of
two line bundles. Namely, assume $L_1,L_2$ are line bundles and $E=E_L\langle \vx\rangle$
is an extension bundle defined by a line bundle $L$ and an element $\vx\in\bbl$.
Then the Hall number $F^E_{L_1 L_2}$ has a close relation with the
automorphism groups of torsion sheaves $S$ which fit into the following
commutative diagram with exact row and column sequences:
\[
\begin{tikzpicture}
\node (-10) at (-1.5,0) {$0$};
\node (-11) at (-1.5,1) {$0$};
\node (40) at (4.5,0) {$0$.};
\node (41) at (4.5,1) {$0$};
\node (1-1) at (1.5,-1) {$0$};
\node (2-1) at (3,-1) {$0$};
\node (13) at (1.5,3) {$0$};
\node (23) at (3,3) {$0$};
\node (0) at (0,0) {$L_1$};
\node (1) at (1.5,0) {$L(\vx)$};
\node (2) at (3,0){$S$};
\node (01) at (0,1) {$L_1$};
\node (11) at (1.5,1) {$W$};
\node (21) at (3,1){$L_2$};
\node (02) at (0,2) {};
\node (12) at (1.5,2) {$L(\vw)$};
\node (22) at (3,2){$L(\vw)$};	
\draw[->] (-10) --node[above ]{} (0);
\draw[->] (-11) --node[above ]{} (01);
\draw[->] (2) --node[above ]{} (40);
\draw[->] (21) --node[above ]{} (41);
\draw[->] (1) --node[above ]{} (1-1);
\draw[->] (2) --node[above ]{} (2-1);
\draw[->] (13) --node[above ]{} (12);
\draw[->] (23) --node[above ]{} (22);

\draw[->] (01) --node[above ]{} (11);
\draw[->] (11) --node[above ]{} (21);
\draw[->] (12) --node[above ]{} (11);
\draw[->] (11) --node[above ]{} (1);	
\draw[->] (0) --node[above ]{} (1);
\draw[->] (1) --node[above ]{} (2);
\draw[double] (12) --node[above ]{} (22);
\draw[double] (0) --node[above ]{} (01);
\draw[->] (21) --node[above ]{} (2);
\draw[->] (22) --node[above ]{} (21);	
\end{tikzpicture} \]
By repeatedly making use of Green's formula, we will prove that
\[F_{L_2L_1}^E=f_{n}(q) {\;\;\text{with}\;\;} n=\dim \Hom(L_1,L_2)-\dim \Ext^1(L_1,L_2),\]
 where $f_{n}(q)$ is defined in \eqref{def-f(q)}; see \cite{SS2}.
Furthermore, we describe Hall polynomials associated with two extension bundles and
an indecomposable torsion sheaf. This requires a bit more detailed analysis and
calculations. Finally, with the help of the embeddings from
both Hall algebra of a tame quiver and that of the associated weighted projective line $\X$ into
the corresponding derived Hall algebra, we obtain Hall polynomials for tame quivers
 which are consistent with the results in \cite{SS2}.

Our calculation is based on a detail description of extension bundles on $\X$ due to
Kussin--Lenzing--Meltzer \cite{KLM}, see also \cite{DR25}.
In comparison with the proofs given in \cite{SS2}, our approach seems to be simpler and more
conceptual since we have a unified way to describe the homomorphisms
among extension bundles, line bundles and torsion sheaves. Moreover, our
results are valid for all weighted projective lines of three weights, which may
be of domestic, tubular or wild type.

The paper is organized as follows. Section 2 recalls the definitions of
coherent sheaves on a weighted projective line and Hall algebra
of a finitary category and states some basic facts. The main calculations of
Hall polynomials are given in Sections 3 and 4, and we obtain Hall polynomials
involving line bundles, extension bundles, and torsion sheaves.
In Section 5, we consider Hall polynomials in bounded derived
categories of weighted projective lines which give rise to Hall polynomials
for representations of tame quivers.

\bigskip

\section{Preliminaries and some preparatory results}

In this section we review the category of coherent sheaves over a weighted projective
line and its basic properties, and introduce the Ringel--Hall algebra of a finitary
category and, in particular, Green's formula in the hereditary case. Orthogonal exceptional
pair associated to an exceptional object in an abelian category is also defined.

\subsection{Coherent sheaves on weighted projective lines}

Let $\mathbf{k}$ be an arbitrary field. A \emph{weighted projective line}
$\X$ over $\mathbf{k}$ is specified by giving a \emph{weight
sequence} ${\bf p}=(p_{1},p_{2},\ldots, p_{t})$ of positive
integers, and a collection
${\boldsymbol\lambda}=(\lambda_{1},\lambda_{2},\ldots, \lambda_{t}) $ of
distinct points in the projective line $\bbP^{1}(\mathbf{k})$ which can be
normalized as $\lambda_{1}=\infty, \lambda_{2}=0, \lambda_{3}=1$.
More precisely, let $\bbL=\bbL(\bf p)$ be the rank one abelian group with
generators $\vec{x}_{1}, \vec{x}_{2}, \ldots, \vec{x}_{t}$ and the
relations
\[ p_{1}\vx_1=p_{2}\vx_2=\cdots=p_{t}\vx_t=:\vec{c},\]
where $\vec{c}$ is called the \emph{canonical element} of
$\mathbb{L}$. Denote by $S$ the commutative algebra
\[S=\mathbf{k}[X_{1},X_{2},\cdots,
X_{t}]/{\frak a}:= \mathbf{k}[{\rm x}_{1},{\rm x}_{2}, \ldots, {\rm x}_{t}],\]
where ${\frak a}=(f_{3},\ldots,f_{t})$ is the ideal generated by
$f_{i}=X_{i}^{p_{i}}-X_{2}^{p_{2}}+\lambda_{i}X_{1}^{p_{1}},
i=3,\ldots, t$. Put $I=\{1,2,\ldots,t\}$. Then $S$ is $\mathbb{L}$-graded by setting
$$\mbox{deg}(\rmx_{i})=\vx_i\; \text{ for each $i\in I$.}$$
Moreover, each element $\vx\in\bbL$ has the normal form $\vx=\sum_{i\in I} l_i\vx_i+l\vc$
with $0\leq l_i<p_i$ and $l\in\bbZ$. We denote by $\bbL_+$ the positive cone of $\bbL$
which consists of those $\vx$ with $l\geq 0$. Finally, the weighted projective line
associated with $\bf p$ and $\boldsymbol\lz$ is defined to be $\bbX=\rm{Proj}^{\bbL}S$.

According to \cite{GL}, the set of nonzero prime homogeneous
elements in $S$ is partitioned into two sets: the
exceptional primes $\rmx_1,\ldots, \rmx_t$ and the ordinary primes
$f(\rmx_{1}^{p_{1}}, \rmx_{2}^{p_{2}})$, where $f(T_1, T_{2})$ is a prime
homogeneous polynomial in $\mathbf{k}[T_1,T_2]$ which are distinct from $T_1, T_2$ and
$T_2-\lz_iT_1$ for $i\in I$. The exceptional primes
correspond to the points $\lz_1, \ldots, \lz_t$, called
exceptional points and denoted by $x_1,\ldots,x_t$, respectively, while the ordinary
primes correspond to the remaining closed points of $\bbP^{1}(\mathbf{k})$, called ordinary
points. For convenience, we denote by ${\mathbb H}_k$ the set of ordinary points.
For each $z\in {\mathbb H}_\mathbf{k}$, its degree $\deg\,z$ is defined to be the degree
of the corresponding prime homogeneous polynomial.

The category of coherent sheaves on $\bbX$ can be defined as the
quotient of the category ${\rm mod}^{\mathbb{L}}(S)$ of finitely generated
$\mathbb{L}$-graded $S$-modules over the Serre subcategory $\mbox{mod}_{0}^{\mathbb{L}}(S)$
of finite length modules, that is,
$$\coh\bbX:={\rm mod}^{\mathbb{L}}(S)/\mbox{mod}_{0}^{\mathbb{L}}(S).$$
The free module $S$ gives the structure sheaf $\co$. Each line
bundle is given by the grading shift $\co(\vec{x})$ for a uniquely
determined element $\vec{x}\in \mathbb{L}$, and there is an
isomorphism
\[\Hom(\co(\vec{x}), \co(\vec{y}))\cong S_{\vec{y}-\vec{x}}.\]
Moreover, $\coh\bbX$ is a hereditary abelian category with Serre
duality of the form
\[D\Ext^1(X, Y)\cong\Hom(Y, X(\vec{\omega})),\]
where $D=\Hom_\mathbf{k}(-,\mathbf{k})$, and $\vec{\omega}:=(t-2)\vec{c}-\sum_{i\in I}\vec{x}_{i}$ is
called the \emph{dualizing element} of $\mathbb{L}$. This implies
the existence of almost split sequences in $\coh\X$ with the
Auslander--Reiten translation $\tau$ given by the grading shift with
$\vec{\omega}$.

Recall that $\coh\X$ admits a splitting torsion pair $({\rm coh}_0\X,\vect{\X})$,
where ${\rm coh}_0\X$ and $\vect{\X}$ are full subcategories
of torsion sheaves and vector bundles, respectively. Moreover, ${\rm coh}_0\X$
decomposes as a direct product of orthogonal tubes
$${\rm coh}_0\X=\prod_{i\in I}\mathcal{T}_i\times \prod_{z\in\bbH_\mathbf{k}}\mathcal{T}_z,$$
 where each $\mathcal{T}_z$ is a homogeneous tube for $z\in \bbH_\mathbf{k}$,
which is equivalent to the category of nilpotent representations of the
Jordan quiver over the residue field $\mathbf{k}_z$, while each $\mathcal{T}_i$
is a non-homogeneous tube for $i\in I$, which is equivalent to the
category of nilpotent representations of the cyclic quiver with $p_i$ vertices.
For each $i\in I$, let $S_{i,j}=S_{i,j}^{(1)}$ be the simple objects in $\mathcal{T}_i$,
where $0\leq j\leq p_i-1$, such that
$$\tau S_{i,j}=S_{i,j-1}.$$
Further, for $n\geq 1$, we denote by $S_{i,{j}}^{(n)}$
the unique indecomposable object in $\mathcal{T}_i$ whose top object is $S_{i,j}$ and length equals to $n$.
While for each $z\in \bbH_\mathbf{k}$, let $S_z=S_{z}^{(1)}$ be the unique simple object
in $\mathcal{T}_z$, and denote by $S_{z}^{(n)}$ the indecomposable object in $\mathcal{T}_z$
of length $n$.

It is known that the Grothendieck group $K_0(\X)$ of $\coh\X$ is a free abelian group
with a basis $[\co(\vec{x})]$ with $0\leq\vec{x}\leq \vec{c}$, where $[X]$ denotes
the class of an object $X\in \coh\X$ in $K_0(\X)$. Set $\delta=[L(\vc)]-[L]$.
The Euler form on $K_0(\X)$ is given by
\[\langle [X],[Y]\rangle=\dim_{k}\Hom(X,Y)-\dim_{k}\Ext^{1}(X,Y)\]
for any $X,Y\in\coh\X$. For simplicity, we sometimes write $\langle X,Y\rangle=\langle [X],[Y]\rangle$.
Its symmetrization is defined by
$$(X,Y)=\langle X,Y\rangle +\langle Y,X\rangle.$$

In this paper we mainly deal with the category $\coh\X$ of coherent sheaves over a weighted
projective line $\X$ of weight type $(p_1, p_2, p_3)$ with $p_i\geq 2$ for $i=1,2,3$.
In this case, we have the following explicit expression for line bundles in $\coh\X$.
\begin{proposition} [\cite{DR25}] \label{Grothendieck group of line bundles in normal form}
Assume that $\vx=\sum\limits_{i=1}^{3}l_{i}\vx_{i}+l\vc$ is in
normal form. Then we have in $K_0(\X)$,
$$[\co(\vx)]=\sum\limits_{i=1}^{3}[\mathcal
{O}(l_{i}\vx_{i})]+l[\co(\vc)] - (l+2)[\co].
$$
\end{proposition}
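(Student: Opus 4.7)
The plan is to establish the formula by induction on the nonnegative integer $N := l + l_1 + l_2 + l_3$, exploiting two natural families of short exact sequences in $\coh\X$ between line bundles whose degrees differ by a single generator. First, for every $\vy \in \bbL$ and every $i \in I$, multiplication by $\rmx_i$ yields an SES $0 \to \co(\vy) \to \co(\vy + \vx_i) \to S_{i,j(\vy)} \to 0$ whose cokernel is a simple torsion sheaf in the $i$-th tube $\mathcal{T}_i$, with the index $j(\vy)$ depending only on the $\vx_i$-component of $\vy$ modulo $p_i$. Second, for every $\vy$, the quotient $\co(\vy+\vc)/\co(\vy)$ represents the fixed class $\delta := [\co(\vc)] - [\co]$ in $K_0(\X)$; this follows either from iterating the first SES $p_i$ times together with the identity $\sum_{j=0}^{p_i-1}[S_{i,j}] = \delta$, or directly from translating the SES $0 \to \co \to \co(\vc) \to \co(\vc)/\co \to 0$.

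The base case $\vx = 0$ reads $[\co] = 3[\co] - 2[\co]$, which is immediate. For the inductive step with $N \geq 1$, if $l \geq 1$ set $\vx' := \vx - \vc = \sum l_i \vx_i + (l-1)\vc$, still in normal form; the $\vc$-shift SES gives $[\co(\vx)] = [\co(\vx')] + \delta$, and combining with the induction hypothesis for $\vx'$ together with $\delta = [\co(\vc)] - [\co]$ produces the claimed expression. If instead $l = 0$ and some $l_i \geq 1$, say $l_1 \geq 1$, set $\vx' := \vx - \vx_1 = (l_1-1)\vx_1 + l_2\vx_2 + l_3\vx_3$, again in normal form; the SES $0 \to \co(\vx') \to \co(\vx) \to S_{1,j(\vx')} \to 0$ gives $[\co(\vx)] = [\co(\vx')] + [S_{1,j(\vx')}]$, while the parallel SES $0 \to \co((l_1-1)\vx_1) \to \co(l_1\vx_1) \to S_{1,j'} \to 0$ gives $[\co(l_1\vx_1)] = [\co((l_1-1)\vx_1)] + [S_{1,j'}]$. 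The crucial observation is that $j(\vx') = j'$, since both indices are determined by the $\vx_1$-component of the source, which equals $l_1 - 1$ in each case. Subtracting these two relations and invoking the induction hypothesis applied to $\vx'$ then matches the right-hand side of the formula for $\vx$.

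The main obstacle is verifying the invariance claim for the cokernel: that the simple sheaf $S_{i,j(\vy)}$ appearing in the multiplication-by-$\rmx_i$ sequence is determined solely by the $\vx_i$-component of $\vy$ modulo $p_i$ and is independent of the other coordinates. This is a structural fact about the sheafification functor ${\rm mod}^{\bbL}(S) \to \coh\X$: the cokernel, as an $\bbL$-graded module, is a grading shift of $S/(\rmx_i)$, and its simple composition type at the $i$-th exceptional point depends only on the $\vx_i$-part of the grading. Once this invariance is in hand, the remainder of the argument is pure bookkeeping of $K_0$-classes, and the proposition drops out from the induction.
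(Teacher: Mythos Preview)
The paper does not include a proof of this proposition; it is quoted from \cite{DR25}. So there is no in-paper argument to compare against, and I can only assess your proposal on its own terms.

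Your strategy is sound, and the key structural observation --- that the simple cokernel of the multiplication-by-$\rmx_i$ map $\co(\vy)\hookrightarrow\co(\vy+\vx_i)$ depends only on the $\vx_i$-coordinate of $\vy$ modulo $p_i$ --- is correct: grading shift by $\vx_k$ for $k\ne i$ fixes every simple in $\mathcal{T}_i$, since $\rmx_k$ is a unit at the exceptional point $x_i$, and shift by $\vc=p_i\vx_i$ likewise acts trivially. The bookkeeping in both inductive steps is also right.

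There is, however, one genuine gap. You induct on $N=l+l_1+l_2+l_3$ and call it a nonnegative integer, but in the normal form $\vx=\sum_i l_i\vx_i+l\vc$ the integer $l$ ranges over all of $\bbZ$, so $N$ can be negative and your case split (``$l\ge1$'' or ``$l=0$ with some $l_i\ge1$'') never reaches $l\le-1$. The fix is painless: either induct on $|l|+l_1+l_2+l_3$ and handle $l\le-1$ by the reverse $\vc$-shift $[\co(\vx)]=[\co(\vx+\vc)]-\delta$, or first establish the formula for all $l\ge0$ exactly as you did and then observe that both sides of the asserted identity change by precisely $\delta=[\co(\vc)]-[\co]$ under $l\mapsto l-1$, which extends the result to all $l\in\bbZ$ at once.
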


\subsection{Orthogonal exceptional pairs}

Let $\mathcal{A}$ be an abelian category over a field $\mathbf{k}$. An object $E\in\mathcal{A}$ is
called exceptional if $\Ext^1(E,E)=0$ and $\End(E)\cong \mathbf{k}$. A pair of exceptional
objects $(E_1, E_2)$ is called orthogonal if
$$\Hom(E_1, E_2)=0=\Hom(E_2, E_1)\;\text{ and }\;\, \Ext^1(E_2, E_1)=0.$$
A theorem of Schofield (see \cite{Rin3}) makes it possible to construct exceptional objects
as extensions of orthogonal exceptional pairs. This procedure is often called Schofield
induction which is a very useful tool in the study of representation theory of algebras.

Following \cite{SS1}, an orthogonal exceptional pair $(E_1, E_2)$ is said to be
associated to an exceptional object $E$ if there exists an exact sequence
$$0\lra E_2\lra E\lra E_1\lra 0.$$
Then, in this case,
$$\aligned
 \dim\Ext^1(E_1, &E_2)=\dim\Hom(E_2, E) =\dim\Hom(E, E_1)=1, \\
 & \langle E_2, E_1\rangle=0\;\text{ and }\;\, \langle E_1, E_2\rangle=-1.
\endaligned$$

By a result of Kussin, Lenzing and Meltzer \cite{KLM}, each indecomposable vector
bundle of rank two in $\coh\X$ is an extension bundle, that is, isomorphic to the
middle term of the following ``unique" non-split exact sequence
\begin{align} \label{exbundle}
{0\lra L(\vw)\lra E_L\langle \vx\rangle\lra L(\vx)\lra 0,}
\end{align}
where $L$ is a line bundle, $\vx\in\mathbb{L}$
with $0\leq \vx \leq \sum\limits_{i=1}^{3}(p_i-2)\vx_i$. Moreover, all the extension bundles $E_L\langle
\vx\rangle$ are exceptional in $\coh\X$. Hence, they are determined
by their classes in the Grothendieck group $K_0(\X)$. For simplicity,
we denote $E_{\co}\langle \vx\rangle$ by $E\langle \vx\rangle$ in the rest of this paper.

The following is a key observation on the
feature of extension bundles.

\begin{proposition} [\cite{DR25}] \label{extension bundle which are in the same orbit}
Assume that $\vx, \vy, \vz\in\bbl$ and $\vx=\sum\limits_{i=1}^{3}l_i\vx_i$
with $0\leq l_i\leq p_i-2$ for $1\leq i\leq 3$. Then
$E\langle \vx\rangle=E\langle \vy\rangle(\vz)$ if and only if one of
the following conditions holds:
	\begin{itemize}
		\item[(i)] $\vy=\vx$ and $\vz=0$;
		\item[(ii)] $\vy=l_j\vx_j+\sum\limits_{i\neq
			j}(p_i-2-l_i)\vx_i$ and $\vz=\sum\limits_{i\neq j}(l_i+1)\vx_i-\vc$
		for some $1\leq j\leq 3$.
	\end{itemize}
\end{proposition}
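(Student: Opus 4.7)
The strategy is to reduce the question to an equality of classes in the Grothendieck group $K_0(\X)$. As noted in the excerpt, every extension bundle is exceptional in $\coh\X$, hence is determined up to isomorphism by its class in $K_0(\X)$. Therefore $E\langle \vx\rangle = E\langle \vy\rangle(\vz)$ if and only if
\[ [E\langle \vx\rangle] = [E\langle \vy\rangle(\vz)] \text{ in } K_0(\X), \]
and applying the defining sequence \eqref{exbundle} to $L=\co$ and to $L=\co(\vz)$ turns this into the linear identity
\[ [\co(\vw)] + [\co(\vx)] = [\co(\vw+\vz)] + [\co(\vy+\vz)]. \]
The whole proof is therefore a combinatorial analysis carried out with Proposition~\ref{Grothendieck group of line bundles in normal form}, which expresses each line bundle class, after normal-form reduction of its grading element, in terms of the $\bbZ$-basis $\{[\co(l_i\vx_i)]\,:\,1\le i\le 3,\ 0\le l_i\le p_i-1\}\cup\{[\co(\vc)]\}$ of $K_0(\X)$.

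For the \emph{if} direction, case (i) is tautological. For case (ii), fix $j$ (say $j=1$): direct substitution gives $\vw+\vz = (p_1-1)\vx_1 + l_2\vx_2 + l_3\vx_3 - \vc$ and $\vy+\vz = l_1\vx_1 + (p_2-1)\vx_2 + (p_3-1)\vx_3 - \vc$, both already in normal form. Expanding via Proposition~\ref{Grothendieck group of line bundles in normal form} and comparing with the expansion of $[\co(\vw)]+[\co(\vx)]$---where $\vw$ in normal form reads $\sum_i (p_i-1)\vx_i - 2\vc$---shows the two sides coincide. The cases $j=2,3$ follow by symmetry of the three weight indices.

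For the \emph{only if} direction, expand both sides of the $K_0$-identity in the chosen basis. The basis vectors $[\co((p_i-1)\vx_i)]$ coming from $[\co(\vw)]$ on the left cannot arise from $[\co(\vx)]$ or from the ``uncarried'' part of $[\co(\vy+\vz)]$, because the parameter ranges $0\le l_i\le p_i-2$ (and the corresponding constraint on $\vy$) exclude them. This forces those contributions to come from the normal-form reduction of $\vw+\vz$ or $\vy+\vz$, i.e.\ from coordinates of $\vz$ that trigger a carry into the $\vc$-part. A short case analysis on the set of carrying coordinates---guided by the requirement that the $[\co(\vc)]$ and $[\co]$ coefficients also match---shows the only possibilities are: no carry, giving $\vz=0$, $\vy=\vx$ (case (i)); or exactly two coordinates $\{i_1,i_2\} = \{1,2,3\}\setminus\{j\}$ carry, producing the reflection formula in (ii).

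The main technical obstacle is this carry analysis, since $\vz$ is a priori unconstrained in sign; all other sub-cases collapse once the constrained ranges on $\vx$ and $\vy$ are imposed on the Grothendieck-group identity. The payoff is that the matching of $K_0$-coefficients is rigid enough to pin down $(\vy, \vz)$ to exactly the two families listed in the proposition.
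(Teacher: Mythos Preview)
The paper does not actually prove this proposition; it is simply quoted from \cite{DR25} without argument, so there is no in-paper proof to compare against. That said, your strategy---reducing to the equality $[\co(\vw)]+[\co(\vx)]=[\co(\vw+\vz)]+[\co(\vy+\vz)]$ in $K_0(\X)$ via exceptionality, then expanding with Proposition~\ref{Grothendieck group of line bundles in normal form}---is exactly the natural line of attack and, carried through, does yield the result.

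Two remarks on execution. First, your description of the basis is slightly imprecise: the elements $[\co(0\cdot\vx_i)]$ coincide for all $i$, so the set you wrote is not literally a $\bbZ$-basis but its underlying set is. Second, and more substantively, the ``only if'' sketch glosses over a clean structural observation that makes the case analysis collapse. Writing $\vz=\sum_i c_i\vx_i+c\vc$ and $\vy=\sum_i m_i\vx_i$ in normal form, comparing the $[\co((p_i-1)\vx_i)]$-coefficients (which occur exactly once on the left, from $\vw$) forces, for each $i$, a dichotomy: either $c_i=0$ and $m_i=l_i$, or $c_i=l_i+1$ and $m_i=p_i-2-l_i$. In both alternatives there is \emph{no} carry in the $\vy+\vz$ reduction, so the parameter $b$ in your sketch is always zero; matching the $[\co(\vc)]$-coefficient then gives $2c+|J|=0$ where $J$ is the set of indices in the second alternative. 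Hence $|J|\in\{0,2\}$, giving precisely cases (i) and (ii). Your phrase ``exactly two coordinates carry'' is a little misleading here, since the carries in question occur only in the $\vw+\vz$ reduction, not in $\vy+\vz$; making this explicit turns the sketch into a complete argument.
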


\begin{proposition} \label{orth}
Assume that $L$ is a line bundle and $E=E_L\langle \vx\rangle$ is an extension bundle for
$\vx\in\bbl$. Then $(L(\vx),L(\vw))$ is an orthogonal exceptional pair associated to the
exceptional sheaf $E$, that is,
\[ 	\Hom(L(\vx),L(\vw))= \Hom(L(\vw),L(\vx))=\Ext^1(L(\vw),L(\vx))=0. 	\]
\end{proposition}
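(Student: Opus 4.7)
The plan is to verify the three vanishings directly by computing graded pieces of the coordinate ring $S$ and invoking Serre duality for the $\Ext^1$-term. First, since twisting by any line bundle is an auto-equivalence of $\coh\X$, I would write $L = \co(\vec{u})$ and twist the defining sequence (\ref{exbundle}) by $-\vec{u}$ to reduce to the case $L = \co$ and $E = E\langle \vx\rangle$. By the very definition of extension bundle, $\vx = \sum_{i=1}^{3} l_i \vec{x}_i$ is already in normal form with $0 \le l_i \le p_i - 2$, and since $t = 3$ we have $\vw = \vc - \sum_{i=1}^{3} \vec{x}_i$.

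The two key tools are the identification $\Hom(\co(\vec{y}), \co(\vec{z})) \cong S_{\vec{z}-\vec{y}}$ together with the fact that $S_{\vec{u}} \neq 0$ if and only if $\vec{u} \in \bbl_+$ (equivalently, the $\vc$-coefficient in the normal form of $\vec{u}$ is non-negative), and Serre duality $D\Ext^1(\co(\vw),\co(\vx)) \cong \Hom(\co(\vx),\co(2\vw))$. Each required vanishing then reduces to checking that an explicit element of $\bbl$, after reduction to normal form using the rewrite $-a\,\vec{x}_i = (p_i - a)\vec{x}_i - \vc$ valid for $0 \le a \le p_i - 1$, has a negative $\vc$-coefficient.

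The three calculations are as follows. First, $\vw - \vx = \vc - \sum(l_i+1)\vec{x}_i$ rewrites to $\sum(p_i - l_i - 1)\vec{x}_i - 2\vc$, with $\vc$-coefficient $-2$. Second, $\vx - \vw = \sum(l_i+1)\vec{x}_i - \vc$ is already in normal form since $1 \le l_i + 1 \le p_i - 1$, with $\vc$-coefficient $-1$. Third, $2\vw - \vx = 2\vc - \sum(l_i+2)\vec{x}_i$ rewrites to $\sum(p_i - l_i - 2)\vec{x}_i - \vc$, again with $\vc$-coefficient $-1$. In each case the element fails to lie in $\bbl_+$, so the three Hom/Ext groups vanish. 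Since $L(\vx)$ and $L(\vw)$ are exceptional line bundles and the defining sequence (\ref{exbundle}) realizes $E$ as an extension of $L(\vx)$ by $L(\vw)$, this establishes the orthogonal exceptional pair associated to $E$.

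The main delicate point is the third rewrite, since $l_i + 2$ may equal $p_i$ (exactly when $l_i = p_i - 2$); in that sub-case $-(l_i+2)\vec{x}_i = -\vc$, which stays consistent with the uniform formula $(p_i - l_i - 2)\vec{x}_i - \vc$ because $p_i - l_i - 2 = 0$ still lies in the allowed coefficient range $[0, p_i)$. Beyond this minor case-split, the proof is elementary arithmetic in the normal form of $\bbl$.
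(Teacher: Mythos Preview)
Your proof is correct and follows essentially the same route as the paper: both arguments reduce the three vanishings to checking that each of $\vw-\vx$, $\vx-\vw$, and (via Serre duality) $2\vw-\vx$ has negative $\vc$-coefficient in normal form, hence lies outside $\bbl_+$. The only cosmetic difference is that the paper expresses this via the inequalities $\sum_{i}\vx_i-\vc\le\vx-\vw\le\sum_{i}(p_i-1)\vx_i-\vc$, etc., whereas you compute the normal form explicitly; your careful handling of the boundary case $l_i=p_i-2$ in the third computation is exactly what the paper's inequality $-\vc\le 2\vw-\vx\le\sum_{i}(p_i-2)\vx_i-\vc$ encodes.
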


\begin{proof} It is known that the dimension of $\Hom(L(\vw),L(\vx))$ is determined
by $\vx-\vw$. Since $\vx$ is an element in $\mathbb{L}$ with
$0\leq \vx \leq \sum\limits_{i=1}^{3}(p_i-2)\vx_i$ and $\vw=\vc-\sum\limits_{i=1}^{3}\vx_i$, we have $$\sum_{i=1}^{3}\vx_i-\vc\leq \vx-\vw \leq \sum\limits_{i=1}^{3}(p_i-1)\vx_i-\vc.$$
Hence, $\Hom(L(\vw),L(\vx))=0$.
	
Similarly, we have
$$\sum\limits_{i=1}^{3}\vx_i-2\vc\leq \vw-\vx \leq \sum\limits_{i=1}^{3}(p_i-1)\vx_i-2\vc.$$
 Thus, $\Hom(L(\vx),L(\vw))=0$.
	
By the Serre duality, we have
$$\Ext^1(L(\vw),L(\vx))\cong D\Hom(L(\vx),L(2\vw)).$$
Since $-\vc\leq 2\vw-\vx \leq \sum\limits_{i=1}^{3}(p_i-2)\vx_i-\vc$, it follows that $\Ext^1(L(\vw),L(\vx))=0.$
\end{proof}

\subsection{Ringel--Hall algebra of a finitary category}

In the rest of this paper, we take the field $\mathbf{k}=\mathbb F_q$, a finite field
of $q$ elements. Let $\mathcal{A}$ be an essentially small abelian category over $\mathbf{k}$.
Assume $\ca$ is finitary, that is,
\[
|\Hom_\ca(M,N)|<\infty,\quad |\Ext^1_\ca(M,N)|<\infty,\,\,\forall\, M,N\in\ca.
\]

Let ${\rm Iso}(\mathcal{A})$ be the set of isomorphic classes in $\mathcal{A}$.
If no confusion arises, we also use $[M]$ to denote the isomorphism class of an object $M$.
The {\em Ringel--Hall algebra} (or {\em Hall algebra} for short) {$\mathcal{H}(\mathcal{A})$} of
$\mathcal{A}$ is defined to be the $\mathbb{Q}$-vector space with the basis
$\big\{[M ]\mid [M]\in{\rm Iso}(\mathcal{A})\big\}$,
endowed with the multiplication defined by (see \cite{Rin1})
$$[A]\cdot [B]=\sum_{[M]\in{\rm Iso}(\mathcal{A})}F_{AB}^M\cdot [M],
\;\;\forall\, A,B\in\mathcal{A},$$
where
$$F_{AB}^M= \big |\{X\subseteq M \mid X \cong B,  M/X\cong A\} \big |.$$
Note that the associativity of multiplication follows from the equalities
\begin{align*}
F_{ABC}^M:=\sum\limits_{[X]}F_{AB}^XF_{XC}^M=\sum\limits_{[Y]}F_{AY}^MF_{BC}^Y,
\end{align*}
where $A,B,C,M\in\ca$.

Furthermore, for given objects $A,B,M\in\ca$, let
$$\Ext_\ca^1(A,B)_M\subseteq \Ext_\ca^1(A,B)$$
denote the subset consisting of extensions whose middle term is isomorphic to $M$. By $a_M$
we denote the cardinality of the automorphism group ${\rm Aut}(M)$ of $M$. The
Riedtmann--Peng's formula states that
 \[F^M_{AB}=\frac{|\Ext_\ca^1(A,B)_M|}{|\Hom_\ca(A,B)|} \cdot \frac{a_M}{a_Aa_B}.
\]

If, moreover, $\ca$ is hereditary, then for each quadruple
$(M,N,X,Y)$ of objects in $\ca$, the following famous \emph{Green's formula}
holds:
\begin{align}\label{Green's formula}
&\sum_{[E]} F_{MN}^{E}F_{XY}^{E}\frac{1}{a_E}
=\sum\limits_{[A],[B],[C],[D]}q^{-\langle A,D\rangle}F_{AB}^{M}F_{CD}^{N}F_{AC}^{X}
F_{BD}^{Y}\frac{a_Aa_Ba_Ca_D}{a_Ma_Na_Xa_Y},
\end{align}
where $\langle A,D\rangle={\rm dim\,Hom}_{\ca}(A,D)-{\rm dim\,Ext}^1_{\ca}(A,D)$ is
the Euler form in $\ca$; see \cite[Thm 2]{Gr}.

\subsection{Two recursion formulas}

The orthogonal exceptional pairs provide an effective way to calculate Hall numbers
in this paper. Keep the notation in 2.3, i.e., $\ca$ is a finitary abelian category
over a finite field $\mathbf{k}=\bbF_q$. We now give two recursion
formulas related to orthogonal exceptional pairs which will be used later on.

\begin{proposition} Let $(E_1,E_2)$ be an orthogonal exceptional
pair in $\mathcal{A}$ associated to an exceptional object $E$. Then for any $X,X'\in\mathcal{A}$,
\[F_{X'E}^X=\sum_{[Z_1]}F_{X'E_1}^{Z_1}F_{Z_1E_2}^X-\sum_{[Z_2]}F_{X'E_2}^{Z_2}F_{Z_2E_1}^X. \]
\end{proposition}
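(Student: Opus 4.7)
The plan is to deduce the identity from the commutator relation
\[ [E_1]\cdot [E_2] - [E_2]\cdot [E_1] = [E] \]
in the Hall algebra $\mathcal{H}(\mathcal{A})$, after which the proposition follows by left-multiplication with $[X']$, associativity, and extraction of the coefficient of $[X]$.

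First I would use the Riedtmann--Peng formula to pin down the relevant Hall numbers. Since $(E_1, E_2)$ is an orthogonal exceptional pair associated to $E$, one has $\dim \Ext^1(E_1, E_2) = 1$ and $\Ext^1(E_2, E_1)=0$, so the only possible middle terms of an extension of $E_1$ by $E_2$ are $E$ (coming from the $q-1$ non-split classes) and $E_1 \oplus E_2$ (from the split class), while extensions of $E_2$ by $E_1$ are all split and produce only $E_1 \oplus E_2$. Combined with the exceptionality giving $a_{E_1} = a_{E_2} = a_E = q-1$, and the orthogonality giving $\End(E_1 \oplus E_2) \cong \mathbf{k} \times \mathbf{k}$ and hence $a_{E_1 \oplus E_2} = (q-1)^2$, a direct application of Riedtmann--Peng yields
\[ F_{E_1 E_2}^{E} = 1, \qquad F_{E_1 E_2}^{E_1 \oplus E_2} = 1, \qquad F_{E_2 E_1}^{E_1 \oplus E_2} = 1. \]
Consequently $[E_1]\cdot[E_2] = [E] + [E_1 \oplus E_2]$ and $[E_2]\cdot[E_1] = [E_1 \oplus E_2]$, and subtracting establishes the commutator relation above.

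Next, left-multiplying both sides by $[X']$ and expanding each triple product via the associativity formula $F_{ABC}^M = \sum_{[X]} F_{AB}^X F_{XC}^M$ recorded in 2.3, I obtain
\[ \sum_{[Z_1],[X]} F_{X'E_1}^{Z_1} F_{Z_1 E_2}^{X}\,[X] \;-\; \sum_{[Z_2],[X]} F_{X'E_2}^{Z_2} F_{Z_2 E_1}^{X}\,[X] \;=\; \sum_{[X]} F_{X'E}^{X}\,[X]. \]
Comparing coefficients of $[X]$ on both sides gives precisely the claimed recursion formula.

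The only non-formal step is the preliminary Hall-number computation: one must correctly enumerate the middle terms of the extensions and chase the orders of automorphism groups through Riedtmann--Peng. Everything else is a routine manipulation in $\mathcal{H}(\mathcal{A})$ once the commutator relation $[E_1]\cdot[E_2] - [E_2]\cdot[E_1] = [E]$ is in hand.
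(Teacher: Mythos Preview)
Your proof is correct and follows essentially the same approach as the paper: both arguments rest on the identities $F_{E_1E_2}^E=F_{E_1E_2}^{E_1\oplus E_2}=F_{E_2E_1}^{E_1\oplus E_2}=1$ and the associativity of the Hall product, with you packaging these as the commutator relation $[E_1][E_2]-[E_2][E_1]=[E]$ in $\mathcal{H}(\mathcal{A})$ while the paper unfolds the same computation directly at the level of Hall numbers. The only cosmetic difference is that you invoke Riedtmann--Peng to justify the three Hall numbers, whereas the paper states them without further comment.
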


\begin{proof} Since $F_{E_1E_2}^Z=1$ for $Z\cong E$ or $E_1\oplus E_2$, and $F_{E_1E_2}^Z=0$
otherwise, it follows that
    \begin{align*}
F_{X'E_1E_2}^X&=\sum_{[Z]}F_{E_1E_2}^{Z}F_{X'Z}^X\\&=F_{E_1E_2}^{E}F_{X'E}^X+
F_{E_1E_2}^{E_1\oplus E_2}F_{X',E_1\oplus E_2}^X=F_{X'E}^X+F_{X',E_1\oplus E_2}^X.
    \end{align*}
    On the other hand, $F_{X'E_1E_2}^X=\sum_{Z_1}F_{X'E_1}^{Z_1}F_{Z_1E_2}^X$. Thus,
\[ F_{X'E}^X=\sum_{[Z_1]}F_{X'E_1}^{Z_1}F_{Z_1E_2}^X-F_{X',E_1\oplus E_2}^X.    \]
 Similarly, $F_{E_2E_1}^Z=1$ for $Z\cong E_1\oplus E_2$, and $F_{E_2E_1}^Z=0$ otherwise, so
\begin{align*}
F_{X'E_2E_1}^X&=\sum_{[Z]}F_{E_2E_1}^{Z}F_{X'Z}^X\\&=F_{E_2E_1}^{E_1\oplus E_2}F_{X',E_1\oplus E_2}^X
=F_{X',E_1\oplus E_2}^X.
\end{align*}
Moreover, $F_{X'E_2E_1}^X=\sum_{[Z_2]}F_{X'E_2}^{Z_2}F_{Z_2E_1}^X$. This implies that
\[ F_{X'E}^X=\sum_{[Z_1]}F_{X'E_1}^{Z_1}F_{Z_1E_2}^X-\sum_{[Z_2]}F_{X'E_2}^{Z_2}F_{Z_2E_1}^X. \]
\end{proof}

\begin{proposition} \label{simplify}
Let $\mathcal{A}$ be a hereditary abelian category and $(E_1,E_2)$ be an orthogonal
exceptional pair in $\mathcal{A}$ associated to an exceptional object $E$.
Then for any two objects $X,X'\in\mathcal{A}$,
\begin{align*}
F_{X'X}^E=\frac{1}{q-1}\sum_{[X_1],[X_2],[X_3],[X_4]}q^{-\langle X_4,X_1\rangle}&F_{X_3X_1}^{X}F_{X_4X_2}^{X'}\frac{a_{X_1}a_{X_2}a_{X_3}a_{X_4}}{a_{X}a_{X'}}\\ &\times(F_{X_2X_1}^{E_2}F_{X_4X_3}^{E_1}-F_{X_2X_1}^{E_1}F_{X_4X_3}^{E_2}).
\end{align*}
\end{proposition}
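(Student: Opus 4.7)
The plan is to apply Green's formula \eqref{Green's formula} twice and take the difference. First I would substitute $(M,N,X,Y)=(X',X,E_1,E_2)$ into Green's formula. The key input is the behaviour of the pair $(E_1,E_2)$: since it is an orthogonal exceptional pair associated with $E$ we have $\dim_{\mathbf{k}}\Ext^1(E_1,E_2)=1$ and $\Hom(E_1,E_2)=0$, so Riedtmann--Peng's formula forces $F_{E_1E_2}^Z$ to equal $1$ exactly when $Z\cong E$ or $Z\cong E_1\oplus E_2$, and to vanish otherwise. Consequently the left-hand side of Green's identity collapses to
\[\frac{F_{X'X}^E}{a_E}+\frac{F_{X'X}^{E_1\oplus E_2}}{a_{E_1\oplus E_2}}.\]
With the relabeling $A=X_4,\ B=X_2,\ C=X_3,\ D=X_1$, the right-hand side matches the sum appearing in the proposition, except that it contains the single term $F_{X_2X_1}^{E_2}F_{X_4X_3}^{E_1}$ in place of the signed combination.

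Second I would apply Green's formula with $(M,N,X,Y)=(X',X,E_2,E_1)$. Because $\Ext^1(E_2,E_1)=0$ by orthogonality, the only $Z$ giving a nonzero $F_{E_2E_1}^Z$ is $Z\cong E_1\oplus E_2$ (with value $1$), so the left-hand side reduces to $F_{X'X}^{E_1\oplus E_2}/a_{E_1\oplus E_2}$ while the right-hand side gives the analogous sum, now with $F_{X_2X_1}^{E_1}F_{X_4X_3}^{E_2}$ in the corresponding slot. Subtracting the two identities kills the term $F_{X'X}^{E_1\oplus E_2}/a_{E_1\oplus E_2}$ on the left and produces precisely the signed combination $F_{X_2X_1}^{E_2}F_{X_4X_3}^{E_1}-F_{X_2X_1}^{E_1}F_{X_4X_3}^{E_2}$ on the right.

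To finish, exceptionality gives $a_E=a_{E_1}=a_{E_2}=q-1$, so multiplying across by $a_E$ contributes the prefactor $\frac{a_E}{a_{E_1}a_{E_2}}=\frac{1}{q-1}$ stated in the proposition. The rest is just bookkeeping. The only step requiring genuine care is the enumeration that pins down $F_{E_1E_2}^Z$ and $F_{E_2E_1}^Z$ to the values claimed above; everything else is a mechanical consequence of Green's formula and the orthogonal exceptional hypothesis.
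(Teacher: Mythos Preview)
Your proposal is correct and follows essentially the same route as the paper: two applications of Green's formula---once with the quadruple placing $(E_1,E_2)$ in the role of $(X,Y)$ and once with $(E_2,E_1)$---followed by subtraction to eliminate the $F_{X'X}^{E_1\oplus E_2}/a_{E_1\oplus E_2}$ term, using $a_E=a_{E_1}=a_{E_2}=q-1$. Your relabeling $A=X_4,\,B=X_2,\,C=X_3,\,D=X_1$ is exactly what is needed to match the indexing in the statement, and your enumeration of $F_{E_1E_2}^Z$ and $F_{E_2E_1}^Z$ is the same as the paper's.
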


\begin{proof} Consider the Green's formula associated to the quadruple $(E_1, E_2,X',X)$:
\begin{align*}
 &\sum_{[W]}F_{X'X}^{W}F_{E_1 E_2}^{W}\frac{1}{a_{W}}\\
 &=\sum_{[X_1],[X_2],[X_3],[X_4]}q^{-\langle X_4,X_1\rangle}F_{X_2X_1}^{E_2}F_{X_3X_1}^{X}F_{X_4X_2}^{X'}
 F_{X_4X_3}^{E_1}\frac{a_{X_1}a_{X_2}a_{X_3}a_{X_4}}{a_{X}a_{X'}a_{E_1}a_{E_2}}.
 \end{align*}
 This can be depicted in the following commutative diagram:
\[ 	\begin{tikzpicture}
	\node (0) at (0,0) {$X_3$};
	\node (1) at (1.5,0) {$E_1$};
	\node (2) at (3,0){$X_4$};
	\node (01) at (0,1) {$X$};
	\node (11) at (1.5,1) {$W$};
	\node (21) at (3,1){$X'$};
	\node (02) at (0,2) {$X_1$};
	\node (12) at (1.5,2) {$E_2$};
	\node (22) at (3,2){$X_2$};	
	\draw[->] (01) --node[above ]{} (11);
	\draw[->] (11) --node[above ]{} (21);
	\draw[->] (12) --node[above ]{} (11);
	\draw[->] (11) --node[above ]{} (1);	
	\draw[->] (0) --node[above ]{} (1);
	\draw[->] (1) --node[above ]{} (2);
	\draw[->] (02) --node[above ]{} (12);
	\draw[->] (12) --node[above ]{} (22);
	\draw[<-] (0) --node[above ]{} (01);
	\draw[<-] (01) --node[above ]{} (02);
	\draw[->] (21) --node[above ]{} (2);
	\draw[->] (22) --node[above ]{} (21);	
	\end{tikzpicture} \]
where all row and column sequences are short exact sequences (For simplicity,
we omit the terms $0$ here).
Since $\Ext^1(E_1,E_2)\cong \mathbf{k}$, we have either $W\cong E$ or
$W\cong E_1\oplus E_2$ when $F_{E_1 E_2}^{W}\not=0$. A direct calculation gives that
$$F_{E_1 E_2}^{E_1\oplus E_2}=F_{E_1E_2}^{E}=1,\, a_{E}=a_{E_1}=a_{E_2}=q-1,\,
a_{E_1\oplus E_2}=(q-1)^2.$$
Therefore,
\begin{align*}
	&\sum_{[X_1],[X_2],[X_3],[X_4]}q^{-\langle X_4,X_1\rangle}F_{X_2X_1}^{E_2} F_{X_3X_1}^{X}
F_{X_4X_2}^{X'}F_{X_4X_3}^{E_1}\frac{a_{X_1}a_{X_2}a_{X_3}a_{X_4}}{a_{X}a_{X'}\cdot(q-1)^2}  \\
 &=F_{X'X}^{E}F_{E_1 E_2}^{E}\frac{1}{q-1}+F_{X'X}^{E_2\oplus E_1}
 F_{E_1 E_2}^{E_2\oplus E_1}\frac{1}{(q-1)^2}  \\
 &=F_{X'X}^{E}\frac{1}{q-1}+F_{X'X}^{E_2\oplus E_1}\frac{1}{(q-1)^2  }.
\end{align*}
Again, by applying Green's formula associated
to the quadruple $(E_2, E_1,X',X)$, we obtain that
\begin{align*}
&\sum_{[W]}F_{X'X}^{W}F_{E_2 E_1}^{W}\frac{1}{a_{W}}\\&=
\sum_{[X_1],[X_2],[X_3],[X_4]}q^{-\langle X_4,X_1\rangle}F_{X_2X_1}^{E_1}F_{X_3X_1}^{X}
F_{X_4X_2}^{X'}F_{X_4X_3}^{E_2}\frac{a_{X_1}a_{X_2}a_{X_3}a_{X_4}}{a_{X}a_{X'}a_{E_2}a_{E_1}}.
	\end{align*}
Since $\Ext^1(E_2,E_1)=0$, the left-hand side contains only one possible
nonzero term, that is, $W\cong E_1\oplus E_2$, and thus,
\begin{align*}
&\sum_{[W]}F_{X'X}^{E_2\oplus E_1}\frac{1}{(q-1)^2} \\
&=\sum_{[X_1],[X_2],[X_3],[X_4]} q^{-\langle X_4,X_1\rangle}F_{X_2X_1}^{E_1}F_{X_3X_1}^{X}
F_{X_4X_2}^{X'}F_{X_4X_3}^{E_2}\frac{a_{X_1}a_{X_2}a_{X_3}a_{X_4}}{a_{X}a_{X'}\cdot (q-1)^2}.
	\end{align*}
By combining two equalities above together, we conclude that
 \begin{align*}
     F_{X'X}^E=\frac{1}{q-1}\sum_{[X_1],X_2,X_3,X_4}q^{-\langle X_4,X_1\rangle}&F_{X_3X_1}^{X}F_{X_4X_2}^{X'}\frac{a_{X_1}a_{X_2}a_{X_3}a_{X_4}}{a_{X}a_{X'}}\\ &\times(F_{X_2X_1}^{E_2}F_{X_4X_3}^{E_1}-F_{X_2X_1}^{E_1}F_{X_4X_3}^{E_2}).
 \end{align*}
\end{proof}

\section{Hall polynomials for line bundles and torsion sheaves}

This section is devoted to calculating Hall numbers for line bundles and torsion
sheaves on a weighted projective line over a finite field $\mathbf{k}$ by applying
Green's formula. It turns out that these numbers are indeed polynomials
in the cardinality of the ground field $\mathbf{k}$ and thus, called Hall polynomials.

Throughout this section, $\X$ always denotes a weight projective line of weight
type ${\bf p}=(p_{1},p_{2},\ldots, p_{t})$ over a field $\mathbf k$.
We keep all the notation in the previous section.

\subsection{Hom and Ext spaces of line bundles and torsion sheaves}
 In this subsection we calculate the dimensions of morphism spaces from
line bundles to torsion sheaves. To this aim, we need some notations. Given a
rational number $m/n$, define
$$\lfloor\dfrac{m}{n}\rfloor={\rm max}\{a\in\Z\mid a\leq m/n\},\;\;
\lceil\dfrac{m}{n}\rceil={\rm min}\{a\in\Z\mid a\geq m/n\}.$$
Further, fix a positive integer $p$, and for $m\in\Z$, set $0\leq \wtd{m}\leq p-1$
such that $\wtd{m}\equiv m\,({\rm mod}\,p)$. Then, for $m,n\in\Z$, we have
the following equalities:
\begin{equation} \label{equality-upper-lower-integer}
\aligned
& \lceil -\dfrac{n}{p}\rceil=-\lfloor\dfrac{n}{p}\rfloor, \quad
\lceil \dfrac{n+1}{p}\rceil=\lfloor\dfrac{n}{p}\rfloor+1,\\
& \lceil\dfrac{n-\wtd{m}}{p}\rceil=\lceil\dfrac{n-{m}}{p}\rceil+\lfloor\dfrac{m}{p}\rfloor,\\
& \lceil\dfrac{n+\wtd{m}}{p}\rceil=\lceil\dfrac{n+{m}}{p}\rceil-\lfloor\dfrac{m}{p}\rfloor.
\endaligned
\end{equation}

Recall from 2.1 the exceptional tubes $\cT_i$ ($i\in I$) in $\coh\X$. In what follows,
for $i\in I, j\in\bbZ$,  and $n\geq 1$, $S_{i,j}^{(n)}$ should be understood as
$S_{i,\tilde j}^{(n)}$ with $p=p_i$.

\begin{lemma} \label{EulerForm} Let $\mathcal{T}_i$ be an exceptional tube in $\coh\X$ of
rank $p_i=p$, and $S_{i,j}^{(n)}, S_{i,k}^{(m)}$ be indecomposable torsion sheaves
for some $n,m\ge1$ and $j,k\in\mathbb{Z}$.
\begin{itemize}
\item[(1)] If $n\ge m\ge1$, then
$$\dim\Hom(S_{i,j}^{(n)}, S_{i,k}^{(m)})=\lceil\dfrac{m-{k+j}}{p}\rceil+\lfloor\dfrac{k-j}{p}\rfloor,$$
\[
\dim\Ext^1(S_{i,k}^{(m)}, S_{i,j}^{(n)})=\lfloor\dfrac{m-{k+j}}{p}\rfloor+\lceil\dfrac{k-j}{p}\rceil.
\]

\item[(2)] If $m\ge n\ge1$, then
$$\dim\Hom(S_{i,j}^{(n)}, S_{i,k}^{(m)})=\lceil\dfrac{m-{k+j}}{p}\rceil+\lfloor\dfrac{n-m+k-j}{p}\rfloor,$$
\[
\dim\Ext^1(S_{i,k}^{(m)}, S_{i,j}^{(n)})=\lfloor\dfrac{m-{k+j}}{p}\rfloor+\lceil\dfrac{n-m+k-j}{p}\rceil.
\]

\item[(3)] For any $n,m\ge1$,
$$\langle S_{i,j}^{(n)}, S_{i,k}^{(m)}\rangle=\lceil\dfrac{m-{k+j}}{p}\rceil
+\lfloor\dfrac{k-j}{p}\rfloor-\lfloor\dfrac{n-{j+k}}{p}\rfloor-\lceil\dfrac{m-n+j-k}{p}\rceil.$$
\end{itemize}
\end{lemma}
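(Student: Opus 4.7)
The plan is to exploit the uniserial structure of the exceptional tube $\cT_i$, which is equivalent to the category of nilpotent representations of the cyclic quiver with $p=p_i$ vertices. Under this equivalence $S_{i,j}^{(n)}$ is uniserial with top $S_{i,j}$ and socle $S_{i,j-n+1}$ (indices read mod $p$); its unique quotient of length $\ell$ is $S_{i,j}^{(\ell)}$ and its unique submodule of length $\ell$ is $S_{i,j-n+\ell}^{(\ell)}$. A nonzero morphism $f\colon S_{i,j}^{(n)}\to S_{i,k}^{(m)}$, with $\ell=\dim\Im f$, therefore factors as the canonical surjection $S_{i,j}^{(n)}\twoheadrightarrow S_{i,j}^{(\ell)}$, an isomorphism $S_{i,j}^{(\ell)}\cong S_{i,k-m+\ell}^{(\ell)}$, and the canonical inclusion $S_{i,k-m+\ell}^{(\ell)}\hookrightarrow S_{i,k}^{(m)}$. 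Such a factorization exists precisely when $1\le\ell\le\min(n,m)$ and $\ell\equiv m-(k-j)\pmod{p}$, and for each admissible $\ell$ the morphisms with that prescribed image length form a one-dimensional subspace of $\Hom$; these subspaces are independent across different $\ell$, so
\[\dim\Hom(S_{i,j}^{(n)},S_{i,k}^{(m)})=\#\bigl\{\ell\in[1,\min(n,m)]\,:\,\ell\equiv m-(k-j)\pmod{p}\bigr\}.\]

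I would then translate this count into the stated floor/ceiling expressions. The number of integers in the interval $[1,N]$ congruent to a fixed residue mod $p$ is expressible as $\lceil(N-r)/p\rceil$ for a suitable representative $r$; taking $N=m$ for part (1) and $N=n$ for part (2), then using the identity $\lceil(N-\widetilde{r})/p\rceil=\lceil(N-r)/p\rceil+\lfloor r/p\rfloor$ from \eqref{equality-upper-lower-integer} to rewrite the residue $\widetilde{m-(k-j)}$, yields the Hom formulas. For the Ext statements I would invoke Serre duality on $\coh\X$ restricted to $\cT_i$, which is $\tau$-stable with $\tau S_{i,k}^{(m)}=S_{i,k-1}^{(m)}$, giving
\[\dim\Ext^1(S_{i,k}^{(m)},S_{i,j}^{(n)})=\dim\Hom(S_{i,j}^{(n)},S_{i,k-1}^{(m)}),\]
and then apply the Hom formula with $k$ replaced by $k-1$, simplifying via $\lceil(x+1)/p\rceil=\lfloor x/p\rfloor+1$.

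Part (3) then follows by forming $\langle\cdot,\cdot\rangle=\dim\Hom-\dim\Ext^1$ in each of the regimes $n\ge m$ and $m\ge n$; using $\lfloor(k-j)/p\rfloor=-\lceil(j-k)/p\rceil$ and its counterpart applied to $m-n+j-k$, both regimes collapse to the single uniform expression stated in (3). The argument is therefore elementary at its core — the uniserial factorization reduces Hom to an interval-counting problem — and the main obstacle is the bookkeeping: one must apply the identities in \eqref{equality-upper-lower-integer} consistently while keeping careful track of the residue $\widetilde{k-j}\in[0,p-1]$ against the integer $k-j$, which is the recurring potential source of sign and shift errors.
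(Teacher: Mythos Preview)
Your approach is correct and essentially the same as the paper's: both exploit the uniserial structure of $\cT_i$ to reduce the $\Hom$ computation to a count, invoke Serre duality $\dim\Ext^1(S_{i,k}^{(m)},S_{i,j}^{(n)})=\dim\Hom(S_{i,j}^{(n)},S_{i,k-1}^{(m)})$, and then simplify via the identities in \eqref{equality-upper-lower-integer}. The only difference is cosmetic---the paper factors every morphism through the maximal submodule $S_{i,j}^{(m-(k-j)^\sim)}\hookrightarrow S_{i,k}^{(m)}$ and quotes $\dim\Hom(S_{i,j}^{(n)},S_{i,j}^{(m')})=\lceil m'/p\rceil$, whereas you count admissible image lengths directly; one small caution is that the set of morphisms with image length \emph{exactly} $\ell$ is not literally a subspace (automorphisms of $S_{i,j}^{(\ell)}$ are not all scalars), so the clean statement is that the subspaces $\{f:\text{image length}\le\ell\}$ give a filtration with one-dimensional successive quotients, which yields the same count.
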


\begin{proof} (1) Since each morphism $S_{i,j}^{(n)}\to S_{i,k}^{(m)}$ factors
through the canonical embedding $S_{i,j}^{(m-(k-j)^\sim)}\to S_{i,k}^{(m)}$,
we obtain the equalities
$$\dim\Hom(S_{i,j}^{(n)}, S_{i,k}^{(m)})=\dim\Hom(S_{i,j}^{(n)},
S_{i,j}^{(m-(k-j)^\sim)})=\lceil\dfrac{m-(k-j)^\sim}{p}\rceil.$$
By \eqref{equality-upper-lower-integer}, we obtain that
 \[  \lceil\dfrac{m-(k-j)^\sim}{p}\rceil=\lceil\dfrac{m-{k+j}}{p}\rceil+\lfloor\dfrac{k-j}{p}\rfloor.   \]
Furthermore, by Serre duality,
$$\dim\Ext^1(S_{i,k}^{(m)}, S_{i,j}^{(n)})=\dim\Hom(S_{i,j}^{(n)}, S_{i,k-1}^{(m)})=\lceil\dfrac{m-{k+1+j}}{p}\rceil+\lfloor\dfrac{k-1-j}{p}\rfloor.$$
Again by \eqref{equality-upper-lower-integer}, this equals to
$$\lfloor\dfrac{m-{k+j}}{p}\rfloor+1+\lceil\dfrac{k-j}{p}\rceil-1
=\lfloor\dfrac{m-{k+j}}{p}\rfloor+\lceil\dfrac{k-j}{p}\rceil.$$

(2) In this case, each morphism $S_{i,j}^{(n)}\to S_{i,k}^{(m)}$ factors through the canonical
embedding $S_{i,j}^{(n-(n-m+k-j)^\sim)}\to S_{i,k}^{(m)}$. Thus, we obtain the equalities
$$\dim\Hom(S_{i,j}^{(n)}, S_{i,k}^{(m)})=\dim\Hom(S_{i,j}^{(n)}, S_{i,j}^{(n-(n-m+k-j)^\sim)})=\lceil\dfrac{n-(n-m+k-j)^\sim}{p}\rceil.$$
 This implies by \eqref{equality-upper-lower-integer} that
\[\lceil\dfrac{n-(n-m+k-j)^\sim}{p}\rceil=\lceil\dfrac{m-{k+j}}{p}\rceil+\lfloor\dfrac{n-m+k-j}{p}\rfloor.   \]
Applying Serre duality gives that
$$\aligned
\dim\Ext^1(S_{i,k}^{(m)}, S_{i,j}^{(n)})&=\dim\Hom(S_{i,j}^{(n)}, S_{i,k-1}^{(m)})\\
&=\lceil\dfrac{m-{k+1+j}}{p}\rceil+\lfloor\dfrac{n-m+k-1-j}{p}\rfloor,
\endaligned$$
which equals to $\lfloor\dfrac{m-{k+j}}{p}\rfloor+\lceil\dfrac{n-m+k-j}{p}\rceil$ by
\eqref{equality-upper-lower-integer}.

(3) It follows immediately from (1), (2) and the equality
$$\langle S_{i,j}^{(n)}, S_{i,k}^{(m)}\rangle=\dim\Hom(S_{i,j}^{(n)}, S_{i,k}^{(m)})
-\Ext^1 (S_{i,j}^{(n)}, S_{i,k}^{(m)}).$$
\end{proof}

\begin{lemma} \label{lem3.2}
Let $L$ be a line bundle and $S, S'$ be indecomposable torsion sheaves supported
at the same point. Assume $S$ and $S'$ have length $n$ and $n'$, respectively,
with $n\leq n'$. If $\Hom(L,\mathrm{top\,} S')\ne0$, then
$$\Hom(L,S)\cong \Hom(S',S)\; \text{ and }\;\, \Hom(L,S')\cong \End(S').$$
\end{lemma}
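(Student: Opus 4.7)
The plan is to lift the nonzero morphism $\phi : L \to \mathrm{top\,} S'$ provided by the hypothesis to an epimorphism $\tilde\phi : L \twoheadrightarrow S'$, set $K = \Ker \tilde\phi$, and prove that composition with $\tilde\phi$ defines an isomorphism $\Phi : \Hom(S', S) \xrightarrow{\sim} \Hom(L, S)$, $\alpha \mapsto \alpha \tilde\phi$. The second isomorphism of the lemma then follows by specializing the first to $S = S'$ (which trivially satisfies $n = n' \leq n'$).

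To produce $\tilde\phi$, note that for any torsion sheaf $T$ Serre duality gives $\Ext^1(L, T) \cong D\Hom(T, L(\vw)) = 0$, since $L(\vw)$ is a line bundle and hence torsion-free. Applying $\Hom(L, -)$ to $0 \to \rad S' \to S' \to \mathrm{top\,} S' \to 0$ therefore produces a surjection $\Hom(L, S') \twoheadrightarrow \Hom(L, \mathrm{top\,} S')$, so $\phi$ lifts to some $\tilde\phi$. The image of $\tilde\phi$ inside the uniserial object $S'$ projects onto $\mathrm{top\,} S'$, and uniseriality forces this image to coincide with $S'$; thus $\tilde\phi$ is epi and $\Phi$ is injective.

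Surjectivity of $\Phi$ reduces to showing $\psi|_K = 0$ for every $\psi \in \Hom(L, S)$. For nonzero $\psi$, the image $M = \Im \psi$ is an indecomposable subsheaf of the uniserial $S$ of some length $k \leq n \leq n'$. Applying $\Hom(L, -)$ to $0 \to \rad M \to M \to \mathrm{top\,} M \to 0$ together with the same Serre duality argument forces $\Hom(L, \mathrm{top\,} M) \neq 0$, which combined with the hypothesis on $L$ gives $\mathrm{top\,} M = \mathrm{top\,} S'$. Hence $M$ is isomorphic to the canonical length-$k$ top quotient $T_k$ of $S'$. Both the composition $\pi_k \tilde\phi : L \twoheadrightarrow T_k$, with $\pi_k : S' \twoheadrightarrow T_k$ the canonical projection, and the co-restriction $\psi' : L \twoheadrightarrow M \cong T_k$ are then epimorphisms $L \twoheadrightarrow T_k$.

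The main obstacle is to prove that any two such epimorphisms share the same kernel. I plan to establish this by showing that $\Hom(L, T_k)$ is free cyclic as an $\End(T_k)$-module: evaluation $\alpha \mapsto \alpha \psi_0$ at any fixed epimorphism $\psi_0$ is injective because $\psi_0$ is epi, and surjective by matching dimensions (both equal $\lceil k/p_i \rceil$ in an exceptional tube, and $k \cdot \deg z$ in a homogeneous tube, by explicit counting of subsheaves together with Lemma 3.1). Consequently any two epimorphisms $L \twoheadrightarrow T_k$ differ by a unit of $\End(T_k)$ and share their kernel. Since $\Ker(\pi_k \tilde\phi) \supseteq K$ tautologically, we conclude $K \subseteq \Ker \psi' = \Ker \psi$, whence $\psi|_K = 0$ and $\psi$ factors through $\tilde\phi$, completing the proof.
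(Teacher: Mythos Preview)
Your proof is correct and takes a genuinely different route from the paper's. The paper simply performs a direct dimension comparison: it filters $S$ by its composition series, uses $\Ext^1(L,\text{torsion})=0$ to obtain $\dim\Hom(L,S)=\sum_t\dim\Hom(L,S_t)$ summed over the simple factors $S_t$, and then checks in each case (homogeneous versus exceptional tube) that this number agrees with the formula for $\dim\Hom(S',S)$ supplied by Lemma~\ref{EulerForm}. No explicit map between the two Hom-spaces is ever written down.

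Your argument is more structural: you exhibit the concrete isomorphism $\Phi:\alpha\mapsto\alpha\tilde\phi$ and prove bijectivity. This has the pleasant side effect that the isomorphism is natural in $S$, something the paper's bare dimension count does not yield (and which is not needed downstream). The cost is the extra kernel-matching step. Observe, however, that your surjectivity argument---showing $\dim\Hom(L,T_k)=\dim\End(T_k)$---requires exactly the same composition-series computation the paper performs, merely applied to $T_k$ in place of $S$; so the technical core is not avoided, only relocated. One small point of presentation: the phrase ``explicit counting of subsheaves'' is vague. What you actually use is the filtration of $T_k$ by simples together with $\Ext^1(L,-)=0$ on torsion and the fact that a line bundle maps nontrivially to exactly one simple in each tube; it would be worth saying this explicitly.
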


\begin{proof} If $S=S_z^{(n)}$ is an indecomposable torsion sheaf
of class $dn\delta$ lying in a homogeneous tube $\mathcal{T}_z$, where $z\in\bbH_\bfk$ with $\mathrm{deg}\,z=d$,
then we obtain exact sequences
\[0\lra \Hom(L,S_z^{(1)})\lra \Hom(L,S_z^{(t)})\lra \Hom(L,S_z^{(t-1)})\lra 0   \]
 for $2\leq t\leq n$. We conclude that
$$\Hom(L,S)\cong \bigoplus_{k=1}^n\Hom(L,S_z^{(1)})\cong \mathbf{k}^{dn}\cong \Hom(S',S).$$

If $S=S_{i,{j}}^{(n)}$ is an indecomposable torsion sheaf lying in an
exceptional tube $\mathcal{T}_i$ of rank $p_i$ for some $i\in I$, then there are exact sequences
\[ 0\lra \Hom(L,S_{i,{j-n+t-1}}^{(t-1)})\lra \Hom(L,S_{i,{j-n+t}}^{(t)})
\lra \Hom(L,S_{i,{j-n+t}}^{(1)})\lra 0    \]
 for $2\leq t\leq n$. Thus,
$$\Hom(L,S)\cong \Hom(L,S_{i,{j}}^{(1)})\oplus \Hom(L,S_{i,{j-1}}^{(1)})
\oplus \cdots \oplus \Hom(L,S_{i,j-n+1}^{(1)}).$$
Assume $\mathrm{top}\,S'=S_{i,{j'}}^{(1)}$ for some $0\leq j'\leq p_i-1$.
Then $S'=S_{i,{j'}}^{(n')}$, and by Lemma \ref{EulerForm},
$$\Hom(L,S)\cong \mathbf{k}^m\cong \Hom(S',S),$$
 where $m=\lceil\frac{n-(j-j')^\sim}{p_i}\rceil$.

This finishes the proof of the first isomorphism, while the
second one follows immediately by replacing $S$ by $S'$.
\end{proof}

Consequently, we obtain the following result,  which indicates that we can use the formulas in Lemma \ref{EulerForm} to calculate the homomorphisms (resp. extensions) between line bundles and torsion sheaves. The strategy will be used frequently later on.

\begin{corollary} \label{cor3.3}
Let $L$ be a line bundle such that $\Hom(L, S_{i,j})\neq 0$ for some simple exceptional
sheaf $S_{i,j}$. Then for $n\geq 1$,
$$\Hom(L,S_{i,k}^{(n)})\cong \Hom(S_{i,j}^{(n)},S_{i,k}^{(n)})
\text{\;\; and \;\;} \Ext^1(S_{i,k}^{(n)},L)\cong \Ext^1(S_{i,k}^{(n)},S_{i,j}^{(n)}).$$
\end{corollary}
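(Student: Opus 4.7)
The plan is to deduce both isomorphisms directly from Lemma \ref{lem3.2} together with Serre duality, by feeding in appropriate choices of the indecomposable torsion sheaves $S$ and $S'$. The crucial observation is that $S_{i,j}^{(n)}$ and $S_{i,k}^{(n)}$ both lie in the exceptional tube $\cT_i$, hence are supported at the same exceptional point $x_i$ and have common length $n$; so the hypothesis of Lemma \ref{lem3.2} (with $n=n'$) is satisfied automatically.

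For the first isomorphism, I would take $S'=S_{i,j}^{(n)}$ and $S=S_{i,k}^{(n)}$. Then $\mathrm{top}\,S'=S_{i,j}$, so the assumption $\Hom(L,S_{i,j})\ne 0$ is precisely the condition $\Hom(L,\mathrm{top}\,S')\ne 0$ required by Lemma \ref{lem3.2}. The first conclusion of that lemma then reads $\Hom(L,S)\cong\Hom(S',S)$, which is exactly the claim.

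For the $\Ext^1$-isomorphism, I would apply Serre duality on both sides together with the identification $\tau S_{i,k}^{(n)}=S_{i,k}^{(n)}(\vw)=S_{i,k-1}^{(n)}$ coming from the description of the Auslander--Reiten translation in $\cT_i$. This yields
\[
\Ext^1(S_{i,k}^{(n)},L)\cong D\Hom(L,S_{i,k-1}^{(n)}),\qquad \Ext^1(S_{i,k}^{(n)},S_{i,j}^{(n)})\cong D\Hom(S_{i,j}^{(n)},S_{i,k-1}^{(n)}),
\]
so the claim reduces to $\Hom(L,S_{i,k-1}^{(n)})\cong\Hom(S_{i,j}^{(n)},S_{i,k-1}^{(n)})$. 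But this is simply the first isomorphism with $k$ replaced by $k-1$; the hypothesis $\Hom(L,S_{i,j})\ne 0$ remains in force, so the same argument applies.

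No substantive obstacle is expected: the content is that once one simple composition factor of the tube is ``seen'' by $L$, all hom- and ext-spaces between $L$ and an object of $\cT_i$ of length $n$ coincide with those from a length-$n$ comparison object in the tube. The only technical point is to keep track of the index shift by $\vw$ when passing through Serre duality.
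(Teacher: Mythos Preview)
Your argument is correct and follows the same route as the paper: the first isomorphism is obtained by applying Lemma~\ref{lem3.2} with $S'=S_{i,j}^{(n)}$ and $S=S_{i,k}^{(n)}$, and the second is deduced from the first via Serre duality (using $\tau S_{i,k}^{(n)}=S_{i,k-1}^{(n)}$). The paper's proof is simply the one-line version of what you wrote.
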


\begin{proof}
    The first statement follows from Lemma \ref{lem3.2}, while  the second one follows from Serre duality.
\end{proof}

\subsection{Hall numbers for line bundles and torsion sheaves} We first consider
some easy cases where only line bundles and torsion sheaves are involved. In this subsection,
$\X$ is an arbitrary weight projective line over a finite field $\mathbf{k}=\bbF_q$.

\begin{proposition} \label{hallpoly1}
Assume that $L$ is a line bundle, $\vx, \vy \in\bbl$ and $S=\bigoplus_k S_k$ is a torsion
sheaf, where $S_k$ are all indecomposable. Then there exists an exact sequence
 \begin{align}\label{exact L xy}
     0\lra L(\vx)\lra L(\vy)\lra S\lra 0
 \end{align}
	if and only if the following three conditions hold:
 \begin{itemize}
     \item [(1)]  $[L(\vx)]+[S]=[ L(\vy)]$ in $\mathrm{K}_0(\X)$;
      \item [(2)] the $S_k$ lie in pairwise distinct tubes;
      \item [(3)] $\Hom(L(\vy),\mathrm{top\,} S_k)\ne0$ for each $k$.
 \end{itemize}
Moreover, in this case,
\[F_{SL(\vx)}^{L(\vy)}=1.	 \]
\end{proposition}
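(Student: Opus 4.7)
I would interpret $F^{L(\vy)}_{SL(\vx)}$ as counting subsheaves $X \subseteq L(\vy)$ isomorphic to $L(\vx)$ with quotient isomorphic to $S$. Since every nonzero morphism between line bundles is a monomorphism, such subsheaves correspond to nonzero elements of $\Hom(L(\vx), L(\vy)) \cong S_{\vy-\vx}$ modulo the scalar action of $\bfk^{\ast} = \mathrm{Aut}(L(\vx))$. The whole proposition therefore reduces to showing
\[
 \bigl|\{0 \ne f \in S_{\vy-\vx} : \Coker(f) \cong S\}\bigr| = q-1,
\]
together with the claim that this set is nonempty exactly when (1)--(3) hold. The main technical input will be the essentially unique factorization of nonzero homogeneous elements of $S$ into the prime homogeneous elements $\rmx_1,\ldots,\rmx_t$ and the ordinary primes $g_z$ (recalled in Subsection~2.1), together with the fact that multiplication by $\rmx_i^{a}$ (resp.\ $g_z^{b}$) contributes a single indecomposable summand of length $a$ in $\cT_i$ (resp.\ $b$ in $\cT_z$) to the cokernel.

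\textbf{Necessity.} Starting from the exact sequence, (1) will be immediate in $K_0(\X)$. For (2) and (3) I would fix the factorization $f = c\prod_i \rmx_i^{a_i}\prod_z g_z^{b_z}$ corresponding to the inclusion and argue point-by-point: at each point the local cokernel is a single cyclic indecomposable in the corresponding tube, which forces the summands $S_k$ of $S$ to lie in pairwise distinct tubes. Condition (3) will follow because the composite $L(\vy)\twoheadrightarrow S\twoheadrightarrow S_k\twoheadrightarrow \mathrm{top\,}S_k$ is a surjection, hence nonzero.

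\textbf{Sufficiency and the count.} For the converse, I would reverse-engineer an element $f_0$ from $S$: each exceptional summand $S_{i,j_i}^{(a_i)}$ contributes the factor $\rmx_i^{a_i}$, where condition~(3) forces $j_i$ to be the unique residue modulo $p_i$ making $\Hom(L(\vy), S_{i,j_i}) \ne 0$; each homogeneous summand $S_z^{(b_z)}$ contributes $g_z^{b_z}$. Proposition~\ref{Grothendieck group of line bundles in normal form} converts condition~(1) into the statement that $f_0$ has degree $\vy-\vx$, while Lemma~\ref{lem3.2} and Corollary~\ref{cor3.3} guarantee that the cokernel of multiplication by $\rmx_i^{a_i}$ has top exactly $S_{i,j_i}$, so that $\Coker(f_0) \cong S$. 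Unique factorization then forces any other $f$ with $\Coker(f) \cong S$ to be a scalar multiple of $f_0$, giving the desired count of $q-1$.

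\textbf{Expected obstacle.} The delicate point is the bookkeeping of the grading shift $j_i$: different target line bundles $L(\vy)$ force different simple tops in the exceptional tube $\cT_i$, and condition~(3) is precisely the compatibility that makes the correspondence between tube-decompositions of $S$ and prime factorizations of elements in $S_{\vy-\vx}$ bijective. Once this matching is set up correctly via Corollary~\ref{cor3.3}, the rest of the argument is essentially formal.
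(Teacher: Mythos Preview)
Your approach is correct and essentially self-contained, but it differs from the paper's. The paper does not invoke the graded factoriality of the coordinate ring $S$; instead it quotes \cite[Exam.~4.12]{Sch2} twice---once for necessity (an epimorphism from a line bundle to a torsion sheaf forces the summands into pairwise distinct tubes) and once for sufficiency (individual epimorphisms $\pi_k:L(\vy)\to S_k$ assemble to an epimorphism $(\pi_k):L(\vy)\to\bigoplus_k S_k$)---and then identifies the kernel via condition~(1). The conclusion $F_{SL(\vx)}^{L(\vy)}=1$ is asserted in a single word (``Consequently''); your counting argument via unique factorization makes this step explicit, which is an advantage. What the paper's route buys is brevity and the avoidance of any grading bookkeeping; what yours buys is that the reader actually sees why the subsheaf is unique rather than merely why it exists.

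Two small citation mismatches: Proposition~\ref{Grothendieck group of line bundles in normal form} is stated only for weight type $(p_1,p_2,p_3)$, whereas the present proposition is for arbitrary weight type; what you really need is the more basic fact that line bundles are determined by their class in $K_0(\X)$, so that $[L(\vy)]-[S]=[L(\vx)]$ forces $\deg f_0=\vy-\vx$. Likewise, Lemma~\ref{lem3.2} and Corollary~\ref{cor3.3} compute dimensions of $\Hom$-spaces rather than identifying cokernels; the step you need is that $\Hom(L(\vy),S_{i,j})\ne 0$ for exactly one $j$ modulo $p_i$, which pins down the top of the exceptional summand of $\Coker(f_0)$. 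That fact is standard (it is implicit in the exact sequences $0\to\co(-\vx_i)\stackrel{\rmx_i}{\to}\co\to S_{i,0}\to 0$ from \cite{GL}), so the argument goes through once you cite it correctly.
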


\begin{proof}
First, the exact sequence \eqref{exact L xy} gives that $[L(\vx)]+[S]=[ L(\vy)]$.
Further, the existence of the epimorphism $L(\vy)\to S$ implies that $S_k$ must be taken from pairwise
distinct tubes, c.f. \cite[Exam. 4.12]{Sch2}. Thus, there is an epimorphism from $L(\vy)$
to $S_k$ for each $k$. We conclude that $\Hom(L(\vy),\mathrm{top\,} S_k)\ne0$.

Conversely, $\Hom(L(\vy),\mathrm{top\,} S_k)\ne0$ implies that there exists an epimorphism
$\pi_k:L(\vy)\rightarrow S_k$. By \cite[Exam. 4.12]{Sch2}, there is an epimorphism
$$(\pi_k):L(\vy)\lra S=\bigoplus_k S_k,$$
 and its kernel $K$ is a line bundle. Thus, $[K]+[S]=[ L(\vy)]$. This together with
$[L(\vx)]+[S]=[ L(\vy)]$ implies that $K\cong L(\vx)$ and hence, we obtain an exact sequence
of form \eqref{exact L xy}. Consequently, $F_{SL(\vx)}^{L(\vy)}=1$.
\end{proof}

Note that in the proposition above, if $[S_k]\ne c\delta$ for each $c\ge0$, then
the condition that $\Hom(L(\vy),\mathrm{top\,} S_k)\ne0$ follows from the other two conditions.

\medskip

Next, we consider the case where the middle term is given by a direct sum of a line bundle and
a torsion sheaf. It turns out that if such an exact sequence does not split, then the
associated Hall number is related to the cardinalities of automorphism groups.

\begin{proposition} \label{hallpoly1'}
Assume that $L,L'$ are line bundles and $S$ is a torsion sheaf. Then there exist a
torsion sheaf $S'$ (may be zero) and an exact sequence
\[0\lra L\lra L'\oplus S'\lra S\lra 0 	\]
if and only if $S'$ is a subsheaf of $S$ such that there is an exact sequence
\[0\lra L\lra L'\lra S/S'\lra 0. 	\]
 In this case, by setting $S''=S/S'$, we have
 \[
 F_{SL}^{L'\oplus S'}={|\Hom(L',S')|}\cdot F_{S''S'}^{S}\frac{ a_{S'}a_{S''}}{ a_S }.
 \]
 In particular, if $S$ is indecomposable, then
 \[
 F_{SL}^{L'\oplus S'}=\begin{cases}
        |\Hom(L,S)|,& \text{ if } S\cong S';\\
        a_{S'},& \text{ if } S\ncong S'.
     \end{cases}
    \]
\end{proposition}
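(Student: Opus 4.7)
The plan is to prove the ``iff'' characterization via a standard pullback construction, and then to extract the Hall number by applying Green's formula to a carefully chosen quadruple.

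For the forward direction of the characterization, I would exploit the splitting torsion pair $(\coh_0\X,\vect\X)$. The composite $S'\hookrightarrow L'\oplus S'\to S$ has kernel $L\cap S'$ inside $L'\oplus S'$, which is simultaneously torsion (as a subsheaf of $S'$) and torsion-free (as a subsheaf of $L$), hence zero; so $S'$ embeds as a subsheaf of $S$. Quotienting the given short exact sequence by the direct summand $0\oplus S'\subset L'\oplus S'$ then yields, via a $3\times 3$ diagram, the required sequence $0\to L\to L'\to S/S'\to 0$. Conversely, given $S'\subseteq S$ and $0\to L\to L'\to S''\to 0$ with $S''=S/S'$, I would form the pullback $P:=L'\times_{S''}S$; it fits into two short exact sequences $0\to L\to P\to S\to 0$ and $0\to S'\to P\to L'\to 0$. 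The second splits since Serre duality gives $\Ext^1(L',S')\cong D\Hom(S',L'(\vw))=0$ (vector-bundle target, torsion source), hence $P\cong L'\oplus S'$ produces the desired sequence.

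For the formula, I would apply Green's formula~\eqref{Green's formula} with $(M,N,X,Y)=(S,L,L',S')$. The left-hand side collapses to a single term because $\Ext^1(L',S')=0$ forces $E\cong L'\oplus S'$, and $F^{L'\oplus S'}_{L'S'}=1$ since any subsheaf of $L'\oplus S'$ isomorphic to the torsion sheaf $S'$ must coincide with the torsion summand. On the right-hand side, the term indexed by $(A,B,C,D)$ survives only when $D$ is both a line-bundle subsheaf of $L$ (from $F^L_{CD}\ne 0$) and a torsion subsheaf of $S'$ (from $F^{S'}_{BD}\ne 0$); the torsion pair forces $D=0$, which pins down $C=L$, $B=S'$ and $A=L'/L=S/S'=S''$, with $F^{L'}_{S''L}=1$ supplied by Proposition~\ref{hallpoly1}. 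Equating the two sides, using $\langle S'',0\rangle=0$ and the decomposition $a_{L'\oplus S'}=a_{L'}a_{S'}|\Hom(L',S')|$ (obtained by writing $\Aut(L'\oplus S')$ as block-triangular matrices, since $\Hom(S',L')=0$), and solving for $F^{L'\oplus S'}_{SL}$ yields the stated identity.

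The special case of indecomposable $S$ then reduces to routine bookkeeping. If $S\cong S'$, then $S''=0$ forces $L\cong L'$, and the formula collapses directly to $|\Hom(L,S)|$. If $S\not\cong S'$, then $F^S_{S''S'}=1$ by the uniqueness of subobjects of prescribed length inside an indecomposable object of a tube, and the remaining identity $|\Hom(L',S')|\cdot a_{S''}=a_S$ can be verified using Corollary~\ref{cor3.3} (exceptional tube) or Lemma~\ref{lem3.2} (homogeneous tube), combined with the Hom-dimension and endomorphism-length formulas extracted from Lemma~\ref{EulerForm}. I expect the main obstacle to be the Green-formula bookkeeping, namely verifying that every non-contributing quadruple genuinely vanishes; once the torsion-pair dichotomy and Proposition~\ref{hallpoly1} are invoked the remaining manipulation is purely mechanical.
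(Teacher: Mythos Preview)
Your proposal is correct and essentially matches the paper's proof: both reduce $F_{SL}^{L'\oplus S'}$ via Green's formula applied to the quadruple $(S,L,L',S')$ (the paper writes it as $(L',S',S,L)$, which is the same after swapping $(M,N)\leftrightarrow(X,Y)$), use the torsion pair to force the ``$D$''-variable to vanish, and invoke Proposition~\ref{hallpoly1} to identify the unique surviving quadruple and set $F_{S''L}^{L'}=1$. The only difference is cosmetic: you prove the ``iff'' characterization separately by a direct $3\times 3$/pullback argument, whereas the paper reads it off from the same Green's-formula reduction (observing that $F_{SL}^{L'\oplus S'}\ne 0$ iff $F_{S''L}^{L'}F_{S''S'}^{S}\ne 0$). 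One small point to tighten when you write it up: the phrase ``pins down $A=L'/L=S/S'$'' deserves a sentence explaining why $A$ is unique up to isomorphism---the paper uses that any torsion quotient of a line bundle has indecomposable summands in pairwise distinct tubes, so is determined by its class in $K_0(\X)$.
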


\begin{proof} By applying Green's formula to the quadruple $(L',S',S,L)$, we obtain that
\begin{align}\label{Green formula LLSS}
	&\sum_{[W]}F_{L'S'}^{W}F_{SL}^{W}\frac{1}{a_{W}}\\\notag
 &=\sum_{[X_1],[X_2],[X_3],[X_4]}q^{-\langle X_4,X_1\rangle}F_{X_2X_1}^{S'}F_{X_3X_1}^{L}F_{X_4X_2}^{S}F_{X_4X_3}^{L'}
 \frac{a_{X_1}a_{X_2}a_{X_3}a_{X_4}}{a_{L}a_{L'}a_{S}a_{S'}};
\end{align}
see the following commutative diagram:
\begin{align}\label{commutative diag}
	\begin{tikzpicture}
	\node (0) at (0,0) {$X_3$};
	\node (1) at (1.5,0) {$L'$};
	\node (2) at (3,0){$X_4$};
	\node (01) at (0,1) {$L$};
	\node (11) at (1.5,1) {$W$};
	\node (21) at (3,1){$S$};
	\node (02) at (0,2) {$X_1$};
	\node (12) at (1.5,2) {$S'$};
	\node (22) at (3,2){$X_2$};	
	\draw[->] (01) --node[above ]{} (11);
	\draw[->] (11) --node[above ]{} (21);
	\draw[->] (12) --node[above ]{} (11);
	\draw[->] (11) --node[above ]{} (1);	
	\draw[->] (0) --node[above ]{} (1);
	\draw[->] (1) --node[above ]{} (2);
	\draw[->] (02) --node[above ]{} (12);
	\draw[->] (12) --node[above ]{} (22);
	\draw[<-] (0) --node[above ]{} (01);
	\draw[<-] (01) --node[above ]{} (02);
	\draw[->] (21) --node[above ]{} (2);
	\draw[->] (22) --node[above ]{} (21);	
	\end{tikzpicture}
\end{align}
 where all row and column sequences are short exact sequences.
Then $W\cong L'\oplus S'$ and $X_1=0$.
Thus, $X_2\cong S'$ and $ X_3\cong L$. Since $X_4$ is a torsion sheaf and there is
an epimorphism from the line bundle $L'$ to $X_4$, the indecomposable direct summands
of $X_4$ lie in distinct tubes, and they are uniquely determined by $S$ and $S'$. In other words,
$X_4$ is unique (up to isomorphism) and so, $X_4\cong S''$.
Further, $F_{L'S'}^{L'\oplus S'}=1$ since $\Hom(S',L')=0$. In conclusion, the equality in
\eqref{Green formula LLSS} is reduced to the equality
 \begin{align*}
F_{SL}^{L'\oplus S'}&=a_{L'\oplus S'}\cdot F_{S''L}^{L'}F_{S''S'}^{S}\frac{a_{L}a_{S'}a_{S''}}{a_{L}a_{L'}a_{S}a_{S'}}.
	\end{align*}
Hence, $F_{SL}^{L'\oplus S'}\neq 0$ if and only if $ F_{S''L}^{L'}F_{S''S'}^{S}\neq 0$.
This proves the first assertion.

 In this case, by Proposition \ref{hallpoly1} we have
 $F_{S''L}^{L'}=1$. It follows that
\begin{align}\label{Hall number for L oplus S}
	F_{SL}^{L'\oplus S'}={|\Hom(L',S')|}\cdot F_{S''S'}^{S}\frac{ a_{S'}a_{S''}}{ a_S }.
	\end{align}

Now assume that $S$ is indecomposable, then so are $S'$ and $S''$. This implies that
$F_{S''S'}^{S}=1$. If $S'\cong S$, then $S''=0$ and $$F_{SL}^{L'\oplus S'}=|\Hom(L',S')|=|\Hom(L,S)|.$$
 If $S'\ncong S$, then $\mathrm{top\,}S=\mathrm{top\,}S''$, and
so $\Hom(L',\mathrm{top\,}S'')=\Hom(L',\mathrm{top\,}S)\ne0$. By Lemma \ref{lem3.2}, we obtain that
$$\Hom(L',S)\cong \End(S)\;\,\text{ and }\;\, \Hom(L',S'')\cong \End(S'').$$
 The exact sequence
 \[  0\lra \Hom(L',S')\lra \Hom(L',S)\lra \Hom(L',S'')\lra 0  \]
together with \eqref{Hall number for L oplus S} gives that
\[ F_{SL}^{L'\oplus S'}=\frac{|\End( S)|}{|\End( S'')|}\cdot \frac{ a_{S'}a_{S''}}{ a_S }. \]
 Since both $S$ and $S''$ are indecomposable and uniserial, we have that
$$|\End( S)|=q^{s+1},\; |\End( S'')|=q^{t+1},\;a_{S}=(q-1)q^s,\;\text{ and }\;\,
a_{S}=(q-1)q^t,$$
 where $s=\dim\rad(S)$ and $t=\dim\rad(S'')$. Therefore,
\[ F_{SL}^{L'\oplus S'}=\frac{|\End( S)|}{|\End( S'')|}\cdot \frac{ a_{S'}a_{S''}}{ a_S }
= \dfrac{ a_{S}}{ a_{S''} } \cdot \frac{ a_{S'}a_{S''}}{ a_S } =a_{S'}. \]
\end{proof}

\begin{proposition} \label{hallpoly1''}
Assume that $L,L'$ are line bundles and $S,S',S''$ are torsion sheaves (or zero sheaves).
Then there exists an exact sequence
\[	0\lra L\oplus S\lra L'\oplus S'\lra S''\lra 0 \]
if and only if there are sheaves $S_1$ and $S_2$ with exact sequences
\[
 0\to S\to S'\to S_1\to 0,\quad 0\to L\to L'\to S_2\to 0,\quad    0\to S_1\to S''\to S_2\to 0.
    \]
   In this case, we have
    \[
    F_{S''L\oplus S}^{L'\oplus S'}=\frac{|\Hom(L',S')|}{|\Hom(L',S)|}\cdot \sum_{X_2}F_{X_2 S}^{S'}F_{S_2 X_2}^{S''}\frac{a_{X_2}a_{S_2}}{a_{S''}}.\]
    In particular, if both $S'$ and $S''$ are indecomposable, then
    \[
 F_{S''L\oplus S}^{L'\oplus S'}=\begin{cases}
        |\Hom(L,S'')|,& \text{ if } L\cong L';\\
        a_{S'/S},& \text{ if } L\ncong L'.
     \end{cases}
    \]
\end{proposition}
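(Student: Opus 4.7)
The strategy parallels that of Proposition~\ref{hallpoly1'}: I would apply Green's formula \eqref{Green's formula} to the quadruple $(L',S',S'',L\oplus S)$, fitting the data into a $3\times 3$ commutative diagram of the shape \eqref{commutative diag} whose middle row is now $0\to L\oplus S\to W\to S''\to 0$. This yields an identity expressing $\sum_{[W]}F_{L'S'}^{W}F_{S'',L\oplus S}^{W}/a_W$ as a sum over tuples $(X_1,X_2,X_3,X_4)$ of products of Hall numbers weighted by automorphism counts and the Euler factor $q^{-\langle X_4,X_1\rangle}$.

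First I would reduce the left-hand side. Serre duality gives $\Ext^{1}(L',S')\cong D\Hom(S',L'(\vw))=0$, since $S'$ is torsion and $L'(\vw)$ is a line bundle, so every extension of $L'$ by $S'$ splits, leaving only $W\cong L'\oplus S'$ with $F_{L'S'}^{L'\oplus S'}=1$. Next I would identify the admissible quadruples. The left column $0\to X_1\to L\oplus S\to X_3\to 0$ forces $X_1$ to be torsion, hence $X_1\subseteq S$, while the bottom row $0\to X_3\to L'\to X_4\to 0$ forces $X_3$ to be a line bundle; together with $X_3\cong L\oplus (S/X_1)$ this forces $X_1\cong S$, $X_3\cong L$, and $F_{L,S}^{L\oplus S}=1$. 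By Proposition~\ref{hallpoly1} the admissible $X_4$ must be a quotient $S_2:=L'/L$, and the remaining index $X_2$ is then constrained by the top row $0\to S\to S'\to X_2\to 0$ and the right column $0\to X_2\to S''\to S_2\to 0$, i.e.\ exactly the three exact sequences in the iff statement.

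To extract the closed formula I would substitute $a_{L\oplus S}=a_{L}a_{S}|\Hom(L,S)|$ and $a_{L'\oplus S'}=a_{L'}a_{S'}|\Hom(L',S')|$ (using $\Hom(S,L)=0=\Hom(S',L')$) into Green's identity, and convert the Euler factor $q^{-\langle S_2,S\rangle}$ into a $\Hom$-ratio by applying $\Hom(-,S)$ to $0\to L\to L'\to S_2\to 0$: since $\Ext^{1}(L,S)=\Ext^{1}(L',S)=0$ by Serre duality, the four-term exact sequence
\[
0\to\Hom(S_2,S)\to\Hom(L',S)\to\Hom(L,S)\to\Ext^{1}(S_2,S)\to 0
\]
gives $|\Hom(L',S)|=q^{\langle S_2,S\rangle}|\Hom(L,S)|$, whence $q^{-\langle S_2,S\rangle}/|\Hom(L,S)|=1/|\Hom(L',S)|$. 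Collecting these substitutions and cancelling produces exactly the asserted formula.

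For the indecomposable case I would split according to whether $L\cong L'$. If $L\cong L'$, then $S_2=0$ and $F_{0,X_2}^{S''}\neq 0$ forces $X_2=S''$; applying $\Hom(L,-)$ to $0\to S\to S'\to S''\to 0$ (exact on the right since $\Ext^{1}(L,-)$ vanishes on torsion) together with $F_{S'',S}^{S'}=1$ reduces the formula to $|\Hom(L,S'')|$. If $L\not\cong L'$, then $S_2\neq 0$, and the uniserial structure of the relevant tube forces $X_2=S_1:=S'/S$ and $X_4=S_2=S''/S_1$ uniquely, with each Hall number in the product equal to $1$; the remaining manipulation reduces to the numerical identity $|\Hom(L',S_1)|\cdot a_{S_2}=a_{S''}$, which can be checked from Lemma~\ref{EulerForm} and the explicit $\Hom$- and $\Aut$-formulas for indecomposable torsion sheaves in a tube, yielding $F=a_{S_1}=a_{S'/S}$. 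The main delicate point will be the careful bookkeeping needed to repackage the automorphism counts and the Euler factor into the compact ratio $|\Hom(L',S')|/|\Hom(L',S)|$, and to verify the numerical identity that collapses the indecomposable case to a single automorphism count.
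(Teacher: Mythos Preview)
Your proposal is correct and follows essentially the same route as the paper: apply Green's formula to the quadruple $(L',S',S'',L\oplus S)$, reduce $W$ to $L'\oplus S'$ via $\Ext^1(L',S')=0$, pin down $X_1\cong S$, $X_3\cong L$, $X_4\cong S_2$, and then simplify the Euler factor using the four-term sequence obtained from $\Hom(-,S)$ applied to $0\to L\to L'\to S_2\to 0$. Two small remarks: the fact that $X_1$ is torsion comes from the top row $X_1\hookrightarrow S'$, not from the left column as you wrote; and for the final identity in the $L\not\cong L'$ case, the paper obtains $|\Hom(L',S')|/|\Hom(L',S)|=a_{S''}/a_{S_2}$ via Lemma~\ref{lem3.2} applied to $S''$ and $S_2$ (both have top with $\Hom(L',-)\neq 0$), which is slightly slicker than computing $|\Hom(L',S_1)|$ directly from Lemma~\ref{EulerForm}, though your route also works.
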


\begin{proof} First of all, Green's formula associated to the quadruple $(L',S',S'',L\oplus S)$
has the form
\begin{align}\label{Green formula for LSW}
 &\sum_{[W]}F_{L'S'}^{W}F_{S''L\oplus S}^{W}\frac{1}{a_{W}}\\\notag
 &=\sum_{[X_1],[X_2],[X_3],[X_4]}q^{-\langle X_4,X_1\rangle}F_{X_2X_1}^{S'}F_{X_3X_1}^{L\oplus S}F_{X_4X_2}^{S''}F_{X_4X_3}^{L'}\frac{a_{X_1}a_{X_2}a_{X_3}a_{X_4}}{a_{L\oplus S}a_{L'}a_{S''}a_{S'}}
\end{align}
which is depicted by the following commutative diagram
  \begin{align}\label{comm diag for LSSLS}
	\begin{tikzpicture}
	\node (0) at (0,0) {$X_3$};
	\node (1) at (1.5,0) {$L'$};
	\node (2) at (3,0){$X_4$};
	\node (01) at (0,1) {$L\oplus S$};
	\node (11) at (1.5,1) {$W$};
	\node (21) at (3,1){$S''$};
	\node (02) at (0,2) {$X_1$};
	\node (12) at (1.5,2) {$S'$};
	\node (22) at (3,2){$X_2$};	
	\draw[->] (01) --node[above ]{} (11);
	\draw[->] (11) --node[above ]{} (21);
	\draw[->] (12) --node[above ]{} (11);
	\draw[->] (11) --node[above ]{} (1);	
	\draw[->] (0) --node[above ]{} (1);
	\draw[->] (1) --node[above ]{} (2);
	\draw[->] (02) --node[above ]{} (12);
	\draw[->] (12) --node[above ]{} (22);
	\draw[<-] (0) --node[above ]{} (01);
	\draw[<-] (01) --node[above ]{} (02);
	\draw[->] (21) --node[above ]{} (2);
	\draw[->] (22) --node[above ]{} (21);	
	\end{tikzpicture}
	\end{align}
 Since $X_1$ is a torsion sheaf and $X_3$ is a subsheaf of $L'$, this implies that
$X_3\cong L$ and $X_1\cong S$. As a quotient sheaf of $L'$, the indecomposable
direct summands of $X_4$ lie in distinct tubes, and they are uniquely determined
by $S, S'$ and $S''$. In other words, $X_4$ is a torsion sheaf, written as $S_2$,
and there is an exact sequence
 \begin{align}\label{short exact L L' S2}
\xymatrix{0\lra L\longrightarrow L'\longrightarrow X_4=S_2\lra 0}
 \end{align}
Since $F_{LS}^{L\oplus S}=1=F_{L'S'}^{L'\oplus S'}$, the formula \eqref{Green formula for LSW}
is reduced to the equality
 \begin{align}\label{Hall number F L'S'}
	F_{S''L\oplus S}^{L'\oplus S'}&=q^{-\langle S_2,S\rangle} \sum_{[X_2]}F_{X_2 S}^{S'}F_{S_2 X_2}^{S''}\frac{a_{S}a_{X_2}a_{S_2}a_L\cdot a_{L'\oplus S'}}{a_{S'}a_{L\oplus S}a_{S''}a_{L'}}.
	\end{align}
Therefore,  $F_{S''L\oplus S}^{L'\oplus S'}\neq 0$ if and only if there exists a torsion
sheaf $S_1$ such that $F_{S_1 S}^{S'}F_{S_2 S_1}^{S''}\neq 0$. This proves the first statement.

Applying $\Hom(-,S)$ to the sequence in \eqref{short exact L L' S2} yields an exact sequence
\[  0\lra \Hom(S_2,S)\lra \Hom(L',S)\lra \Hom(L,S)\lra \Ext^1(S_2,S)\lra 0.  \]
 Then \eqref{Hall number F L'S'} can be written as
 \begin{align*}
	F_{S''L\oplus S}^{L'\oplus S'}
 &=\frac{|\Ext^1(S_2,S)|}{|\Hom(S_2,S)|}\frac{|\Hom(L',S')|}{|\Hom(L,S)|}\cdot\sum_{[X_2]}F_{X_2 S}^{S'}F_{S_2 X_2}^{S''}\frac{a_{X_2}a_{S_2}}{a_{S''}}
 \\&=\frac{|\Hom(L',S')|}{|\Hom(L',S)|}\cdot \sum_{[X_2]}F_{X_2 S}^{S'}F_{S_2 X_2}^{S''}
 \frac{a_{X_2}a_{S_2}}{a_{S''}}.
	\end{align*}

If, in addition, both $S'$ and $S''$ are indecomposable, then $S$ is indecomposable, and
$X_2$ is a uniquely determined indecomposable torsion sheaf, written as $S_1$. Moreover,
$$F_{S_1 S}^{S'}=1=F_{S_2 S_1}^{S''}.$$
If $L\cong L'$, then $S_2=0$, $S''\cong S_1$, and
$$F_{S''L\oplus S}^{L'\oplus S'}=|\Hom(L',S_1)|=|\Hom(L,S'')|.$$
 If $L\ncong L'$, i.e., $S_2\ne0$, then $\Hom(L',\mathrm{top\,}S_2)=\Hom(L',\mathrm{top\,}S'')\ne0$.
Since $S''$ and $S_2$ are indecomposable and uniserial, we have by Lemma \ref{lem3.2} that
$$\dfrac{|\Hom(L',S')|}{|\Hom(L',S)|}=\dfrac{|\Hom(L',S'')|}{|\Hom(L',S_2)|}
 =\dfrac{|\End(S'')|}{|\End(S_2)|}=\dfrac{a_{S''}}{a_{S_2}}.$$
 Consequently, we have
 \begin{align*}
	F_{S''L\oplus S}^{L'\oplus S'}&=\frac{a_{S''}}{a_{S_2}}\cdot F_{S_1 S}^{S'}F_{S_2 S_1}^{S''}\frac{a_{S_1}a_{S_2}}{a_{S''}}=a_{S_1}=a_{S'/S}.
	\end{align*}
\end{proof}

\section{Hall polynomials involving extension bundles}

In this section we assume that $\X$ is a weighted projective line of weight type
$(p_1,p_2,p_3)$ over a finite field $\mathbf{k}=\bbF_q$, and focus on calculating
Hall numbers for the case where extension bundles are concerned with. As in the
previous section, these numbers are again polynomials in $q=|\bfk|$.

By \eqref{exbundle} in Section 2, each bundle of rank 2 is an extension of two line bundles
$L(\vw)$ and $L(\vx)$ for $0\leq \vx \leq \sum\limits_{i=1}^{3}(p_i-2)\vx_i$.
We first describe the morphism and extension spaces between $L(\vw)$ and $L(\vx)$.

By the orthogonality of $(L(\vx),L(\vw))$, we immediately obtain that
$F_{L(\vx)L(\vw)}^{W}=1$ if $W\cong E$ or $W\cong L(\vx)\oplus L(\vw)$, and
$F_{L(\vx)L(\vw)}^{W}=0$ for the remaining cases. Moreover, $F_{L(\vw)L(\vx)}^{W}=1$ if
$W\cong L(\vx)\oplus L(\vw)$, and $F_{L(\vw)L(\vx)}^{W}=0$ otherwise.
\medskip

\begin{lemma} Let $L_1$ and $L_2$ be two line bundles in $\coh\X$. Then
$$\Ext^1(L_1,L_2)=0\;\,\text{ or }\;\,\Ext^1(L_2,L_1)=0.$$
\end{lemma}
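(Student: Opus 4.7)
The plan is to reduce the claim to a positivity question in the grading group $\bbL$ and then contradict it using the explicit form of the dualizing element $\vw=\vc-\vx_1-\vx_2-\vx_3$. Writing $L_1=\co(\vy_1)$ and $L_2=\co(\vy_2)$ and setting $\vz=\vy_1-\vy_2$, Serre duality $\Ext^1(L_i,L_j)\cong D\Hom(L_j,L_i(\vw))$ combined with the isomorphism $\Hom(\co(\vx),\co(\vy))\cong S_{\vy-\vx}$ translates the two $\Ext^1$-nonvanishings into the respective positivity conditions
\[ \vw+\vz\in\bbL_+ \quad\text{and}\quad \vw-\vz\in\bbL_+. \]

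Next, I would verify that $\bbL_+$ is closed under addition. Given two elements in normal form $\sum l_i\vx_i+l\vc$ and $\sum m_i\vx_i+m\vc$, re-normalizing the sum only modifies the $\vc$-coefficient by adding one for each index $i$ where $l_i+m_i\ge p_i$, so nonnegativity of the $\vc$-coefficient is preserved. Consequently, if both $\Ext^1$'s were nonzero, the sum $(\vw+\vz)+(\vw-\vz)=2\vw$ would lie in $\bbL_+$, and the lemma would follow once we rule this out.

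The final step is to show that $2\vw\notin\bbL_+$ by an explicit normal-form computation. Normalizing each $-2\vx_i$ yields $(p_i-2)\vx_i-\vc$, valid uniformly for $p_i\ge2$ (with the summand $(p_i-2)\vx_i$ vanishing when $p_i=2$). Summing gives
\[ 2\vw=2\vc+\sum_{i=1}^{3}(-2\vx_i)=\sum_{i=1}^{3}(p_i-2)\vx_i-\vc, \]
a normal form whose $\vc$-coefficient is $-1<0$, so $2\vw\notin\bbL_+$, yielding the desired contradiction. The only mildly delicate point is the uniform normalization of $-2\vx_i$ across the cases $p_i=2$ and $p_i\ge3$; since both contribute the same $\vc$-carry $-1$, this is a bookkeeping matter rather than a genuine obstacle.
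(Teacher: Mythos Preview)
Your proof is correct and follows essentially the same approach as the paper's: both argue by contradiction, use Serre duality to convert the two nonvanishing $\Ext^1$'s into the positivity conditions $\vw\pm\vz\in\bbL_+$, add them to get $2\vw\in\bbL_+$, and then observe that $2\vw=\sum_i(p_i-2)\vx_i-\vc$ has negative $\vc$-coefficient in normal form. Your version simply makes explicit the closure of $\bbL_+$ under addition and the normal-form bookkeeping that the paper leaves implicit.
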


\begin{proof} Suppose that $\Ext^1(L_1,L_2)\ne0$ and $\Ext^1(L_2,L_1)\ne0$. Then
\[     \det L_1- \det L_2+\vw\ge0,\quad \det L_2- \det L_1+\vw\ge0.     \]
 This implies that $ 2\vw=2\vc-2\sum_{i=1}^3\vx_i\geq 0$, a contradiction.
\end{proof}

By applying Proposition \ref{simplify} to the orthogonal exceptional pair $(L(\vx),L(\vw))$
associated to $E=E_L\langle\vx\rangle$ in $\coh\X$, we obtain that for any $X,X'\in\coh\X$,
 \begin{equation} \label{Hall-number-ext-bdl-1}
 \aligned
 F_{X'X}^E=\frac{1}{q-1}\sum_{[X_1],[X_2],[X_3],[X_4]}q^{-\langle X_4,X_1\rangle}
   &F_{X_3X_1}^{X}F_{X_4X_2}^{X'}\frac{a_{X_1}a_{X_2}a_{X_3}a_{X_4}}{a_{X}a_{X'}}\\ &\times(F_{X_2X_1}^{L(\vw)}F_{X_4X_3}^{L(\vx)}-F_{X_2X_1}^{L(\vx)}F_{X_4X_3}^{L(\vw)}).
\endaligned
\end{equation}
In order to calculate $F_{X'X}^E$, we write
\begin{equation} \label{Hall-number-ext-bdl-2}
F_{X'X}^E=\Psi_1-\Psi_2,
\end{equation}
 where
$$\aligned
\Psi_1&=\frac{1}{q-1}\sum_{[X_1],[X_2],[X_3],[X_4]}q^{-\langle X_4,X_1\rangle}F_{X_3X_1}^{X}F_{X_4X_2}^{X'}
\frac{a_{X_1}a_{X_2}a_{X_3}a_{X_4}}{a_{X}a_{X'}}F_{X_2X_1}^{L(\vw)}F_{X_4X_3}^{L(\vx)},\\
\Psi_2&=\frac{1}{q-1}\sum_{[X_1],[X_2],[X_3],[X_4]}q^{-\langle X_4,X_1\rangle}F_{X_3X_1}^{X}F_{X_4X_2}^{X'}
\frac{a_{X_1}a_{X_2}a_{X_3}a_{X_4}}{a_{X}a_{X'}}F_{X_2X_1}^{L(\vx)}F_{X_4X_3}^{L(\vw)}.
\endaligned$$

\medskip

The automorphism groups of torsion sheaves and their cardinalities play a
role in the calculation of Hall numbers. We first present some formulas given in
\cite[Prop.~6.1]{SS2}. As pointed out in Section 2, there are exactly three exceptional tubes
$\cT_1,\cT_2,\cT_3$ of ranks $p_1,p_2, p_3$, respectively, in $\coh\X$. Then the sums
of classes of all simple objects in $\cT_1$,
$\cT_2$ and $\cT_3$ in $\mathrm{K}_0(\X)$ are the same, equal to $\delta$. For $1\leq i\leq 3$, take an
indecomposable torsion sheaf $S_i$ in $\cT_i$ with the class $\sigma_i<\delta$.

For $0\leq k\leq 3$, let $\mathscr{S}_{n}^{\sigma_1,\ldots,\sigma_k}$ denote
the set of isomorphism classes of all torsion sheaves $S$ such that
\begin{itemize}
\item[(1)] the class of $S$ is $n\delta+\sigma_1+\cdots+\sigma_k$,
\item[(2)] its indecomposable summands are taken from pairwise distinct tubes, and
\item[(3)] $S$ contains exactly one indecomposable summand from each
exceptional tube of $\cT_1, \ldots, \cT_k$.
\end{itemize}
Finally, set
$$s_n^{(k)}(q)=\frac{1}{q-1}\sum_{[S]\in \mathscr{S}_{n}^{\sigma_1,\ldots,\sigma_k}}a_S.$$
 By convention, set $s_0^{(k)}(q)={(q-1)}^{k-1}$ and $s_{-1}^{(k)}(q)=0$.

Further, define
\begin{equation} \label{def-f(q)}
f_n(q)=\sum_{i=1}^n (-1)^{i-1}(2i-1)q^{n+1-i}+(-1)^n (n+1),\;\;\forall\, n\geq 1,
\end{equation}
and set $f_0(q)=1,\,f_{-n}(q)=0$.

\begin{proposition} [{\rm \cite[Prop.~6.1]{SS2}}] \label{ssprop}
For $n\ge1$ and $0\leq k\leq 3$, we have the following equalities
\[ s_n^{(k)}(q)=\sum_{i=1}^{2n+k-1} (-1)^{i-1}i q^{2n+k-i}-(-1)^k(n+1).  \]
Moreover,
 \begin{align*}
    & s_n^{(0)}(q)-s_{n-2}^{(3)}(q)=f_{2n-1}(q), & \text{ for }n\ge1;
	\\&s_{n-1}^{(3)}(q)-s_{n}^{(0)}(q)=f_{2n}(q), & \text{ for }n\ge1;
	\\&s_{n}^{(1)}(q)-s_{n-1}^{(2)}(q)=f_{2n}(q), & \text{ for }n\ge0;
	\\&s_{n}^{(2)}(q)-s_{n}^{(1)}(q)=f_{2n+1}(q), & \text{ for }n\ge0,
	\end{align*}
and
 \begin{align}
 \label{sumf}
     (q-1)\sum_{t\ge0}q^tf_{n-2t}(q)=f_{n+1}(q)+(-1)^n \; \text{ for }n\ge0.
 \end{align}

\end{proposition}

In the following we view each $f_n(q)$ either as a polynomials in $q$ or
as a number evaluating at the cardinality $q=|\bfk|$ of a finite field $\bfk$.
For notational simplicity, we sometimes write $f_n$ instead of $f_n(q)$.

\medskip

With the help of notations above, we are ready to calculate Hall numbers (polynomials)
involving extension bundles.

\subsection{Hall polynomial $F_{L_2L_1}^E$ for line bundles $L_1$ and $L_2$}

In this subsection, we calculate Hall number $F_{L_2L_1}^E$, where $E$ is
an extension bundle and $L_1$ and $L_2$ are line bundles.
It turns out that it is determined by the value $\langle L_1,L_2\rangle$.

\begin{theorem} \label{hallpoly2}
Assume that $L_1,L_2$, and $L$ are line bundles and $E=E_L\langle \vx\rangle$ is
an extension bundle with $\vx\in\bbl$. If $[L_1]+[L_2]=[E]$, then
$F_{L_2L_1}^E=f_{\langle L_1,L_2\rangle}(q).$
\end{theorem}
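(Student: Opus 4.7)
My plan is to apply Proposition~2.8 to the orthogonal exceptional pair $(L(\vx),L(\vw))$ associated with $E$ (see Proposition~2.6), specialising to $X=L_1$ and $X'=L_2$. This writes
$$F^E_{L_2L_1}=\Psi_1-\Psi_2$$
as a difference of two sums indexed by quadruples $(X_1,X_2,X_3,X_4)$ of iso classes fitting the $3\times 3$ commutative diagram of exact sequences in the proof of Proposition~2.8.

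The key simplification is that $L_1,L_2,L(\vw),L(\vx)$ are all line bundles, so each subsheaf appearing in the diagram must be either $0$ or a line bundle, while its complementary quotient is then either the whole ambient line bundle or torsion. Running through the compatibilities of the two rows and two columns, I expect $\Psi_1$ to split into a \emph{torsion part} parametrised by $X_4=S$, where $S$ is a torsion sheaf arising as the common cokernel of $L_1\hookrightarrow L(\vx)$ and $L(\vw)\hookrightarrow L_2$ (forcing $X_1=0$, $X_2=L(\vw)$, $X_3=L_1$), together with a \emph{boundary part} that is nonzero only when $(L_1,L_2)=(L(\vw),L(\vx))$. By Proposition~3.1 each admissible $S$ contributes $a_S/(q-1)$ to the torsion part, so summing over admissible $S$ recovers precisely
$$\Psi_1^{\mathrm{tor}}=s^{(k_1)}_{n_1}(q)$$
in the notation of Proposition~4.1, where $(n_1,k_1)$ is read off from the class $[S]=[L(\vx)]-[L_1]$: namely $n_1$ is the coefficient of $\delta$, and $k_1$ is the number of exceptional tubes hosting a summand of $S$ (determined by the top-compatibility with both $L(\vx)$ and $L_2$). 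An analogous decomposition of $\Psi_2$ swaps the roles of $L(\vx)$ and $L(\vw)$ and produces $s^{(k_2)}_{n_2}(q)$ plus its own boundary part, nonzero only when $(L_1,L_2)=(L(\vx),L(\vw))$.

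Combining everything, $F^E_{L_2L_1}$ becomes a difference $s^{(k_1)}_{n_1}-s^{(k_2)}_{n_2}$, which equals $f_m(q)$ for a specific integer $m$ by one of the four identities in Proposition~4.1; the boundary corrections for the distinguished pairs $(L(\vw),L(\vx))$ and $(L(\vx),L(\vw))$ reproduce exactly $f_0=1$ and $f_{-1}=0$, respectively. The main obstacle is the bookkeeping needed to verify $m=\langle L_1,L_2\rangle$: I would compute $\dim\Hom(L_1,L_2)$ and $\dim\Ext^1(L_1,L_2)$ (the latter via Serre duality) from the normal forms of $\vy_2-\vy_1$ and $\vy_1+\vw-\vy_2$ in $\bbL$ using Proposition~2.2, then match the resulting Euler form against the indices $(n_i,k_i)$ via a short case analysis depending on which of the inequalities $\vy_1\le\vx$, $\vy_1\le\vw$, $\vx\le\vy_2$, $\vw\le\vy_2$ hold (writing $L_j=L(\vy_j)$). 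In each case one of the four identities of Proposition~4.1 collapses the difference to exactly $f_{\langle L_1,L_2\rangle}(q)$.
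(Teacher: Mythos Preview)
Your approach is correct and essentially the same as the paper's: both apply Proposition~2.8 to the orthogonal pair $(L(\vx),L(\vw))$, reduce the resulting $\Psi_1,\Psi_2$ to sums of $a_S/(q-1)$ over admissible torsion sheaves via Proposition~3.4, identify these with the $s^{(k)}_n(q)$, and finish with the identities of Proposition~4.1.

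The only substantive difference is organizational. Where you propose to treat the special pairs $(L(\vw),L(\vx))$ and $(L(\vx),L(\vw))$ as ``boundary corrections'' and to verify $m=\langle L_1,L_2\rangle$ by a case analysis on normal forms of $\vy_2-\vy_1$, the paper first disposes of these two degenerate pairs and the case $\langle L_1,L_2\rangle<0$ directly, and then observes that any remaining $L_1$ (with $\langle L_1,L_2\rangle\ge 0$) must satisfy $\Hom(L_1,L(\vx))\ne 0$, so that $L_1,L_2$ are parametrised by a subset $J\subseteq\{1,2,3\}$ and an integer $l\ge 0$ via
\[
L_1=L\Big(\sum_{j\notin J}l_j\vx_j+\sum_{i\in J}(p_i-1)\vx_i-(l+|J|)\vc\Big),\quad
L_2=L\Big(\sum_{j\notin J}(p_j-1)\vx_j+\sum_{i\in J}l_i\vx_i+(l+|J|-2)\vc\Big).
\]
This parametrisation buys two things at once: it shows immediately that $\langle L_1,L_2\rangle=2l+|J|-1$, and it pins down \emph{which} exceptional tubes (and with which prescribed classes $\sigma_i$) can host a summand of $S$, giving $\Psi_1=s_l^{(|J|)}(q)$ and $\Psi_2=s_{l+|J|-2}^{(3-|J|)}(q)$ without further casework. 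Your plan would reach the same endpoint but through four separate cases; the $(J,l)$-parametrisation is a tidier way to do the bookkeeping you anticipate.
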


\begin{proof}
We first consider the case $L_1\cong L(\vx)$ or $L(\vw)$. If $L_1\cong L(\vx)$,
then $L_2\cong L(\vw)$. In this case, by Proposition \ref{orth}, we have
$$\langle L_1,E\rangle=\langle [L_1],[L_1]+[L_2]\rangle=0\;\text{ and }\;\,
F_{L_2L_1}^E=0.$$
If $L_1\cong L(\vw)$, then $L_2\cong L(\vx)$, and by Proposition \ref{orth} again,
$$\langle L_1,E\rangle=1\;\text{ and }\;\, F_{L_2L_1}^E=1.$$

Now suppose that $L_1\ncong L(\vw), L(\vx).$ If $\langle L_1,L_2 \rangle<0$,
then $\Ext^1(L_1,L_2)\ne0$ and hence, $\Ext^1(L_2,L_1)=0$. Thus, $F_{L_2L_1}^E=0$.

Assume $\langle L_1,L_2 \rangle \ge0$. Then
$$1\le \langle L_1,L_2\rangle+\langle L_1,L_1\rangle=\langle L_1,L(\vw)\rangle+\langle L_1,L(\vx)\rangle.$$
This implies that $\Hom (L_1,L(\vw))\ne0$ or $\Hom (L_1,L(\vx))\ne0$.
In the case when $\Hom(L_1,L(\vw))\ne0$, let $L_1=L(\vw-\vy), L_2=L(\vx+\vy)$ with
$\vy=\sum_{i=1}^3l'_i\vx_i+l'\vc>0$ and $\vx=\sum_{i=1}^{3}l_i\vx_i$ with
$0\leq l_i\leq p_i-2$ for $1\leq i\leq 3$. Then $l'_i=0$ or $l'_i=p_i-1-l_i$.
It follows that $\vx+\vy-\vw\ge0$ and so $\Hom(L_1,L(\vx))\ne0.$ Therefore,
there exists
$\vz=\sum_{i=1}^3(l_i+1)\vx_i+l\vc$ such that $L_1=L(\vx-\vz), L_2=L(\vw+\vz)$.
 The case $\Hom(L_1,L(\vx))\not=0$ can be treated similarly.

As conclusion, there is a subset $J\subseteq\{1,2,3\}$ such that
$$\aligned
& L_1=L\big(\sum_{j\notin J}l_j \vx_j+\sum_{i\in J}(p_i-1)\vx_i-(l+|J|)\vc\big),     \\
& L_2=L\big(\sum_{j\notin J}(p_j-1) \vx_j+\sum_{i\in J}l_i \vx_i+(l+|J|-2)\vc\big),
\endaligned$$
 and $\langle L_1, L_2\rangle=\dim \Hom (L_1,L_2)=2l+|J|-1$.

We claim that $F_{S_1L(\vw)}^{L_2}F_{S_1L_1}^{L(\vx)}\ne0$ if
and only if $ S_1\in \mathscr{S}_l^{\{\sigma_i\}_{i\in J}}$. For convenience,
assume $L=\co$ and denote by $\sigma_i=[S_{i,p_i-1}^{(p_i-1-l_i)}]$.
Then $[L_2]-[L(\vw)]=\sum_{i\in J}\sigma_i+l\delta$. For
$\varphi=\prod_{i\in J} {\rm x}_i^{p_i-1-l_i}\cdot f_l:L(\vw)\rightarrow L_2$ or $L_1\rightarrow L(\vx)$,
where $f_l\in\mathbf{k}[T_1,T_2]$ is a homogeneous polynomial of degree $l\vc$ (Note that the
coefficients of $\vx_j\, (j\notin J)$ in $L_2$ and $L(\vx)$ are different).
Hence, no direct summand of $\Coker \varphi$ lies in $\mathcal{T}_j$.

Similarly, $F_{S_2L(\vx)}^{L_2}F_{S_2L_1}^{L(\vw)}\ne0$ if and only if
$ S_2\in \mathscr{S}_{l+|J|-2}^{\{\sigma_j\}_{j\notin J}}$. Then by \eqref{Hall-number-ext-bdl-2},
we obtain that
\begin{align*}
    F_{L_2L_1}^E=\Psi_1-\Psi_2=
    s_l^{(|J|)}(q)-s^{(3-|J|)}_{l+|J|-2}(q)=f_{2l+|J|-1}(q)=f_{\langle L_1,L_2\rangle}(q),
\end{align*}
where both Proposition \ref{ssprop} and the equality $\langle L_1,L_2 \rangle=2l+|J|-1$
are used.
\end{proof}

Under the assumption $[L_1]+[L_2]=[E]$, we have
$$\langle L_1,E\rangle=\langle L_1,L_1\rangle+\langle L_1,L_2\rangle=1+\langle L_1,L_2\rangle$$
and
$$\langle E,E\rangle=1=\langle L_1,L_1\rangle+\langle L_1,L_2\rangle
+\langle L_2,L_1\rangle+\langle L_2,L_2\rangle.$$
Hence,
$$\langle L_1,L_2\rangle+\langle L_2,L_1\rangle=-1.$$
As a consequence,
$$\langle L_1,L_2\rangle \ge0 \Longleftrightarrow \langle L_2,L_1\rangle \le-1
\Longleftrightarrow\langle L_1,E\rangle \ge1.$$

\subsection{Hall polynomials $F_{SE'}^E$ for $S$ lying in a homogeneous tube}

In this subsection, we consider Hall polynomials $F_{SE'}^E$, where $E$ and $E'$ are
extension bundles and $S$ is an indecomposable torsion sheaf
taken from a homogeneous tube $\mathcal{T}_z$.

\begin{proposition} \label{hallpoly3}
Let $E, E'$ be extension bundles and $S$ be an indecomposable torsion sheaf taken from a
homogenous tube $\cT_z$ with $\deg z=d$. Assume $[E']+[S]=[E]$ and $S=S_z^{(n)}$ for some $n\geq 1$.
Then
\[  F_{SE'}^E=f_{dn}-f_{dn-1}+(q^d-1)\sum_{t\ge1}q^{d(t-1)}\big(f_{d(n-2t)}-f_{d(n-2t)-1}\big). \]
\end{proposition}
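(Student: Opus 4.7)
The plan is to apply Proposition \ref{simplify} to the orthogonal exceptional pair $(L(\vx), L(\vw))$ associated to $E = E_L\langle\vx\rangle$, with $X = E'$ and $X' = S$. This decomposes $F_{SE'}^E = \Psi_1 - \Psi_2$ as in \eqref{Hall-number-ext-bdl-2}. In the summand, $X_1, X_3$ appear as sub-line-bundles of $L(\vw), L(\vx)$ (or of $L(\vx), L(\vw)$ in $\Psi_2$), hence are line bundles, while $X_2, X_4$ are torsion subquotients of $S = S_z^{(n)}$. Since $S$ is indecomposable in the homogeneous tube $\mathcal{T}_z$, these force $X_2 = S_z^{(k)}$ and $X_4 = S_z^{(n-k)}$ for some $0\le k\le n$, which then uniquely determines $X_1$ and $X_3$. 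One checks $[X_1]+[X_3]=[E']$ automatically, so Theorem \ref{hallpoly2} yields $F_{X_3 X_1}^{E'} = f_{\langle X_1,X_3\rangle}(q)$.

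Next, I would assemble the numerical factors. Proposition \ref{hallpoly1} gives $F_{X_2 X_1}^{L(\vw)} = F_{X_4 X_3}^{L(\vx)} = 1$, and the uniseriality of $S_z^{(n)}$ gives $F_{S_z^{(n-k)}S_z^{(k)}}^{S_z^{(n)}} = 1$. Exceptionality of $E'$ and of line bundles produces $a_{E'} = a_{X_1} = a_{X_3} = q-1$ (with $a_0 = 1$ at the boundary), and $a_{S_z^{(m)}} = (q^d-1)q^{d(m-1)}$ for $m\ge 1$. Serre duality combined with $\tau S_z^{(n-k)} \cong S_z^{(n-k)}$ (homogeneity of $\mathcal{T}_z$) together with Lemma \ref{lem3.2} gives $\dim\Ext^1(X_4,X_1) = d(n-k)$, so $q^{-\langle X_4,X_1\rangle} = q^{d(n-k)}$. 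Combining all of this produces a coefficient of $1$ for $k\in\{0,n\}$ and $(q^d-1)/q^d$ for $1\le k\le n-1$, both multiplied by $q^{d(n-k)}$.

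The crucial step is evaluating $\langle X_1,X_3\rangle$ in each of $\Psi_1$ and $\Psi_2$. For weight type $(p_1,p_2,p_3)$ and $\vy = \sum l_i'\vx_i + l\vc$ in normal form, a direct count of monomials in $S$ gives $\dim S_\vy = l+1$ if $l\ge0$ and $0$ otherwise. Using $\dim\Hom(L(\vy_1),L(\vy_2)) = \dim S_{\vy_2-\vy_1}$, Serre duality, and the normal forms
\[
\vx-\vw = \sum(l_i+1)\vx_i - \vc, \quad \vw-\vx = \sum(p_i-1-l_i)\vx_i - 2\vc, \quad 2\vw-\vx = \sum(p_i-2-l_i)\vx_i - \vc
\]
(with $\vx=\sum l_i\vx_i$), a short case analysis on the sign of $n-2k$ yields the clean formulas
\[
\langle X_1,X_3\rangle_{\Psi_1} = (2k-n)d, \qquad \langle X_1,X_3\rangle_{\Psi_2} = (2k-n)d - 1,
\]
valid uniformly in $k$. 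Hence Theorem \ref{hallpoly2} delivers $F_{X_3 X_1}^{E'} = f_{(2k-n)d}(q)$ in $\Psi_1$ and $f_{(2k-n)d-1}(q)$ in $\Psi_2$.

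Finally I reindex with $t = n-k$, so $2k-n = n-2t$ and $q^{d(n-k)} = q^{dt}$. The boundary $k=n$ ($t=0$) contributes $f_{dn}-f_{dn-1}$; the boundary $k=0$ ($t=n$) contributes $q^{dn}(f_{-dn}-f_{-dn-1}) = 0$ since $f_m=0$ for $m<0$; the interior $1\le t\le n-1$ contributes $q^{d(t-1)}(q^d-1)\bigl[f_{d(n-2t)}-f_{d(n-2t)-1}\bigr]$, and terms with $t>n/2$ vanish so the range extends to all $t\ge 1$. Summing gives exactly the announced formula. The main obstacle is the normal-form bookkeeping needed to establish the uniform expressions $\langle X_1,X_3\rangle_{\Psi_1}=(2k-n)d$ and $\langle X_1,X_3\rangle_{\Psi_2}=(2k-n)d-1$; once these are in hand, the result falls out directly and no further recursive identity (in particular \eqref{sumf}) is required.
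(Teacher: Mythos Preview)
Your proof is correct and follows essentially the same route as the paper's: apply Proposition~\ref{simplify} to the orthogonal pair $(L(\vx),L(\vw))$, parametrize the possible $(X_1,X_2,X_3,X_4)$ by the length of the sub/quotient of $S_z^{(n)}$, and invoke Theorem~\ref{hallpoly2} for $F_{X_3X_1}^{E'}$. Two small remarks. First, the paper disposes of the boundary $X_2=0$ by observing directly that $\Ext^1(L(\vx-dn\vc),L(\vw))=0$ so $E'$ cannot arise; your handling via $f_{-dn}=0$ is equally valid since Theorem~\ref{hallpoly2} still applies and returns $0$. Second, your normal-form case analysis to obtain $\langle X_1,X_3\rangle_{\Psi_1}=(2k-n)d$ and $\langle X_1,X_3\rangle_{\Psi_2}=(2k-n)d-1$ is more work than necessary: since $[\co(\vy+m\vc)]=[\co(\vy)]+m\delta$ and $\langle \co,\delta\rangle=1$, you get these immediately from $\langle L(\vw),L(\vx)\rangle=0$ and $\langle L(\vx),L(\vw)\rangle=-1$ (Proposition~\ref{orth}) by linearity, with no case split on the sign of $n-2k$.
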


\begin{proof}
In view of \eqref{Hall-number-ext-bdl-1} and \eqref{Hall-number-ext-bdl-2}, we calculate
the corresponding values $\Psi_1$ and $\Psi_2$. Write $E=E_L\langle \vx\rangle$ for some
line bundle $L$ and $\vx\in\bbl$. Consider the following diagram with each row and
column being short exact sequences:
\[
\begin{tikzpicture}
\node (0) at (0,0) {$X_3$};
\node (1) at (1.5,0) {$L(\vx)$};
\node (2) at (3,0){$X_4$};
\node (01) at (0,1) {$E'$};
\node (11) at (1.5,1) {$W$};
\node (21) at (3,1){$S$};
\node (02) at (0,2) {$X_1$};
\node (12) at (1.5,2) {$L(\vw)$};
\node (22) at (3,2){$X_2$};	
\draw[->] (01) --node[above ]{} (11);
\draw[->] (11) --node[above ]{} (21);
\draw[->] (12) --node[above ]{} (11);
\draw[->] (11) --node[above ]{} (1);	
\draw[->] (0) --node[above ]{} (1);
\draw[->] (1) --node[above ]{} (2);
\draw[->] (02) --node[above ]{} (12);
\draw[->] (12) --node[above ]{} (22);
\draw[<-] (0) --node[above ]{} (01);
\draw[<-] (01) --node[above ]{} (02);
\draw[->] (21) --node[above ]{} (2);
\draw[->] (22) --node[above ]{} (21);	
\end{tikzpicture} \]
Then $W\cong E$ or $W\cong L(\vx)\oplus L(\vw)$. Furthermore,
$X_2,X_4$ are indecomposable torsion sheaves or zero and $X_1,X_3$ are line bundles.
If $X_2=0$, then $X_1=L(\vw),X_3=L(\vx-dn\vc)$, but $\Ext^1(L(\vx-dn\vc),L(\vw))=0$.
This contradicts that fact that $E'$ is an extension bundle. Therefore, $X_2\ne0$ and
$$\Psi_1=(q-1)\sum_{[X_1],[X_2],[X_3],[X_4]}q^{-\langle X_4,X_1\rangle}F_{X_2X_1}^{L(\vw)}
F_{X_3X_1}^{E'}F_{X_4X_2}^{S}F_{X_4X_3}^{L(\vx)}
\frac{a_{X_1}a_{X_2}a_{X_3}a_{X_4}}{a_{E'}a_{S}a_{L(\vw)}a_{L(\vx)}}.$$
We focus on non-zero terms on the right-hand side. Then $X_4=S_z^{(t)}$ for some $0\leq t<n$,
and hence,
$$X_2\cong S_z^{(n-t)},\,X_1\cong L(\vw-d(n-t)\vc),\; \text{ and }\;X_3\cong L(\vx-dt\vc).$$
By Theorem \ref{hallpoly2},
$$F_{L(\vx-dt\vc)L(\vw-d(n-t)\vc)}^{E'}=f_{\langle L(\vw-d(n-t)\vc),L(\vx-dt\vc)\rangle}=f_{d(n-2t)}.$$
Since $\langle X_4,X_1\rangle=-dt$ and
\[\frac{a_{S_z^{(n-t)}}a_{S_z^{(t)}}}{a_{S_z^{(n)}}}=
\begin{cases}
\dfrac{q^d-1}{q^d}, &\text{  if } 1\le t< n;  \\
1,  &\text{  if } t=0, \end{cases} \]
we obtain that
\begin{align}
\label{green3}
\Psi_1=f_{dn}+\frac{q^d-1}{q^d}\sum_{t\ge1}q^{dt}f_{d(n-2t)}.
\end{align}

To calculate $\Psi_2$, consider similarly the following commutative diagram:
\[
\begin{tikzpicture}
\node (0) at (0,0) {$X_3$};
\node (1) at (1.5,0) {$L(\vw)$};
\node (2) at (3,0){$X_4$};
\node (01) at (0,1) {$E'$};
\node (11) at (1.5,1) {$W$};
\node (21) at (3,1){$S$};
\node (02) at (0,2) {$X_1$};
\node (12) at (1.5,2) {$L(\vx)$};
\node (22) at (3,2){$X_2$};	
\draw[->] (01) --node[above ]{} (11);
\draw[->] (11) --node[above ]{} (21);
\draw[->] (12) --node[above ]{} (11);
\draw[->] (11) --node[above ]{} (1);	
\draw[->] (0) --node[above ]{} (1);
\draw[->] (1) --node[above ]{} (2);
\draw[->] (02) --node[above ]{} (12);
\draw[->] (12) --node[above ]{} (22);
\draw[<-] (0) --node[above ]{} (01);
\draw[<-] (01) --node[above ]{} (02);
\draw[->] (21) --node[above ]{} (2);
\draw[->] (22) --node[above ]{} (21);	
\end{tikzpicture} \]
Then $W\cong L(\vx)\oplus L(\vw)$ and $X_2\ne0$ as above. Therefore,
\begin{align*}
\Psi_2=(q-1)\sum_{[X_1],[X_2],[X_3],[X_4]}q^{-\langle X_4,X_1\rangle}F_{X_2X_1}^{L(\vx)}F_{X_3X_1}^{E'}F_{X_4X_2}^{S}F_{X_4X_3}^{L(\vw)}
\frac{a_{X_1}a_{X_2}a_{X_3}a_{X_4}}{a_{E'}a_{S}a_{L(\vw)}a_{L(\vx)}}.
\end{align*}
By considering non-zero terms in the right-hand side, we have $X_4=S_z^{(t)}$ for some
$0\leq t<n$, and then
$$X_2\cong S_z^{(n-t)},\, X_1\cong L(\vx-d(n-t)\vc),\;\text{ and }\; X_3\cong L(\vw-dt\vc).$$
Again, by Theorem \ref{hallpoly2},
$$F_{L(\vw-dt\vc)L(\vx-d(n-t)\vc)}^{E'}=
f_{\langle L(\vx-d(n-t)\vc),L(\vw-dt\vc)\rangle}=f_{d(n-2t)-1}.$$
From the equality $\langle X_4,X_1\rangle=-dt$ it follows that
\begin{align}
\label{green4}
\Psi_2=f_{dn-1}+\frac{q^d-1}{q^d}\sum_{t\ge1}q^{dt}f_{d(n-2t)-1}.
\end{align}
Combining the formulas \eqref{green3} and \eqref{green4}, we obtain
\[ F_{SE'}^E=\Psi_1-\Psi_2=f_{dn}-f_{dn-1}+(q^d-1)\sum_{t\ge1}q^{d(t-1)}(f_{d(n-2t)}-f_{d(n-2t)-1}). \]
\end{proof}

\subsection{Hall polynomials $F_{SE'}^E$ for $S$ lying in an exceptional tube}

Let $E,E'$ be two extension bundles with $E=E_L\langle \vx\rangle$
for a line bundle $L$ and some $\vx\in\bbl$.
Set $N=\lfloor\frac{1}{2}\langle E',E\rangle\rfloor-1$, and write $\vx-\vw=\sum\limits_{k=1}^{3}l_k\vx_k-\vc$ with
$1\leq l_k\leq p_k-1$ for $1\leq k\leq 3$.
Let $S=S_{i,j}^{(n)}$ be an indecomposable torsion sheaf taken from an exceptional
tube $\mathcal{T}_i$ for a fixed $i\in I$.

\begin{lemma} \label{preparing lemma}
Keep notations as above. Assume $[E']+[S]=[E]$.
\begin{itemize}
  \item[(1)] If $\Hom(L(\vx),\mathrm{top\,} S)\ne0$, then $n\not \equiv l_i\,(\mod p_i)$, and $N=\lfloor\frac{n-l_i}{p_i}\rfloor$.
  \item[(2)] If $\Hom(L(\vw),\mathrm{top\,} S)\ne0$, then $n\not \equiv p_i-l_i\,(\mod p_i)$, and $N=\lfloor\frac{n-(p_i-l_i)}{p_i}\rfloor$.
\end{itemize}\end{lemma}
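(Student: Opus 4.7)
The plan is to compute $\langle E', E\rangle$ directly and extract $N$, using the exceptionality of $E'$ to force the congruence conditions.

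I would begin from $[E'] = [E] - [S]$ together with $\langle E, E\rangle = 1$, which yield $\langle E', E\rangle = 1 - \langle S, E\rangle$. Applying $\langle S, -\rangle$ to the defining sequence $0 \to L(\vw) \to E \to L(\vx) \to 0$ and using $\Hom(S, L) = 0$ for a torsion sheaf $S$ and any line bundle $L$, Serre duality transforms the remaining piece into
\[
\langle S, E\rangle = -\dim\Hom(L(\vw), S(\vw)) - \dim\Hom(L(\vx), S(\vw)),
\]
where $S(\vw) = \tau S = S_{i, j-1}^{(n)}$. In Case (1), the hypothesis $\Hom(L(\vx), S_{i,j}) \ne 0$ pins $j \equiv l_i - 1 \pmod{p_i}$ via the normal form of $\vx$. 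Corollary \ref{cor3.3} then reduces each Hom on the right to a Hom between indecomposable torsion sheaves in $\mathcal{T}_i$, which Lemma \ref{EulerForm} evaluates (with the help of \eqref{equality-upper-lower-integer}) to $\lfloor n/p_i\rfloor$ and $\lfloor (n - l_i)/p_i\rfloor + 1$ respectively. Combining gives $\langle E', E\rangle = 2 + \lfloor n/p_i\rfloor + \lfloor (n - l_i)/p_i\rfloor$, and a short check via the residue decomposition $n - l_i = p_i q + r$ with $0\le r<p_i$ confirms $N = q = \lfloor (n - l_i)/p_i\rfloor$ regardless of which of the two possible values $\lfloor n/p_i\rfloor$ takes.

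The congruence $n \not\equiv l_i \pmod{p_i}$ I plan to derive by contradiction from $\langle E', E'\rangle = 1$, which is forced by $E'$ being exceptional as an extension bundle. Assuming $n = l_i + tp_i$ for some $t \ge 0$, a parallel computation of $\langle E, S\rangle$ and $\langle S, E\rangle$ (both obtainable from the same Hom formulas in Lemma \ref{EulerForm}) together with $\langle S, S\rangle = 1$ (since $p_i \nmid n$), substituted into the identity
\[
\langle E', E'\rangle = 1 - \langle E, S\rangle - \langle S, E\rangle + \langle S, S\rangle,
\]
will produce $\langle E', E'\rangle = 2$, giving the desired contradiction. Case (2) then follows by the same scheme with the roles of $L(\vx)$ and $L(\vw)$ interchanged and $l_i$ replaced by $p_i - l_i$ throughout, starting from the symmetric pinning $j \equiv p_i - 1 \pmod{p_i}$.

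The main obstacle will be the combinatorial bookkeeping with floor and ceiling under congruences modulo $p_i$ via \eqref{equality-upper-lower-integer}: in particular, checking that the two residue branches in which $\lfloor n/p_i\rfloor$ equals $q$ or $q + 1$ both collapse to the single formula for $N$, and verifying that $\langle E', E'\rangle$ genuinely equals $2$ (rather than $1$) in the forbidden congruence class $n \equiv l_i \pmod{p_i}$.
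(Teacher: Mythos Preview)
Your proposal is correct and follows essentially the same strategy as the paper: compute $\langle E',E\rangle = 1 - \langle S,E\rangle$, reduce $\langle S,L(\vw)\rangle$ and $\langle S,L(\vx)\rangle$ to Ext (or, via Serre duality, Hom) groups between indecomposable torsion sheaves in $\mathcal{T}_i$ using Corollary~\ref{cor3.3}, evaluate these with Lemma~\ref{EulerForm}, and then extract $N$ from the floor expression. The paper arrives at the identical formula $\langle E',E\rangle = 2 + \lfloor n/p_i\rfloor + \lfloor (n-l_i)/p_i\rfloor$ and reads off $N$ in the same way (though it suppresses the residue check you spell out).

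The one place your argument differs is the contradiction for $n\equiv l_i\pmod{p_i}$. You expand $\langle E',E'\rangle$ bilinearly as $1-\langle E,S\rangle-\langle S,E\rangle+\langle S,S\rangle$ and evaluate each term through the same Hom/Ext machinery. The paper instead observes that when $n=l_i+tp_i$, one has the Grothendieck-group identity $[L(\vx)]-[S]=[L(\vx-l_i\vx_i)]-t\delta$, so $[E']=[L(\vw)]+[L(\vx-l_i\vx_i)]-t\delta$; the orthogonality $\langle L(\vw),L(\vx-l_i\vx_i)\rangle=0=\langle L(\vx-l_i\vx_i),L(\vw)\rangle$ together with $\langle\delta,-\rangle+\langle-,\delta\rangle=0$ on line bundles then gives $\langle E',E'\rangle=2$ without further torsion computations. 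The paper's route is slightly cleaner here, while yours has the advantage of using a single uniform technique throughout. One small caveat: your pinning $j\equiv l_i-1\pmod{p_i}$ presumes $L=\co$; for general $L$ only the \emph{relative} shift $j\mapsto j-l_i$ between $L(\vx)$ and $L(\vw)$ matters, which is all you actually use.
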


\begin{proof} We only prove assertion (1). The proof of (2) is similar.

We first claim that $n\not \equiv l_i\,(\mod p_i)$.
Otherwise, suppose $n=l_i+tp_i$ for some $t\geq 0$. Then
$[L(\vx)]-[S]=[L(\vx-l_i\vx_i)]-t\delta$, and hence,
$$[E']=[E]-[S]=[L(\vw)]+[L(\vx)]-[S]=[L(\vw)]+[L(\vx-l_i\vx_i)]-t\delta.$$ Note that
$\langle L(\vw),L(\vx-l_i\vx_i)\rangle=0=\langle L(\vx-l_i\vx_i),L(\vw) \rangle$. Therefore,
\begin{align*} \langle E',E'\rangle&=\langle [L(\vw)]+[L(\vx-l_i\vx_i)]-t\delta, [L(\vw)]+[L(\vx-l_i\vx_i)]-t\delta\rangle=2.\end{align*}
This contradicts the fact that $E'$ is an exceptional object.

Further, we have
\begin{align*} \langle E',E\rangle&=\langle [L(\vw)]+[L(\vx)]-[S], [L(\vw)]+[L(\vx)]\rangle\\
&=1-\langle [S], [L(\vw)]+[L(\vx)]\rangle\\
&=1+\dim\Ext^{1}(S, L(\vw))+\dim\Ext^{1}(S, L(\vx))\\
&=1+\dim\Ext^{1}(S_{i,j}^{(n)}, S_{i,j-l_i}^{(n)})+\dim\Ext^{1}(S_{i,j}^{(n)}, S_{i,j}^{(n)})\\
&=1+\lfloor\frac{n}{p_i}\rfloor+\lfloor\frac{n-l_i}{p_i}\rfloor+\lceil\frac{l_i}{p_i}\rceil\\
&=2+\lfloor\frac{n}{p_i}\rfloor+\lfloor\frac{n-l_i}{p_i}\rfloor.\end{align*}
This implies that
$$N=\lfloor\frac{1}{2}\langle E',E\rangle\rfloor-1=1+\lfloor\frac{n-l_i}{p_i}\rfloor-1
=\lfloor\frac{n-l_i}{p_i}\rfloor.$$
\end{proof}

\begin{proposition} \label{hallpoly4}
Let $E,E'$ be extension bundles and $S=S_{i,j}^{(n)}$ be an indecomposable torsion
sheaf taken from an exceptional tube as above.
Assume $[E']+[S]=[E]$ and $\Hom(E,\mathrm{top}\, S)\ne0$.
Then
 \begin{align*}		
  F_{SE'}^E=\begin{cases}
			1, & \text{ if } N=-1;\\
f_{N+1}-f_{N}+(-1)^{N}, & \text{ if }  N\geq 0,
		\end{cases}
	\end{align*}
where $N=\lfloor\frac{1}{2}\langle E',E\rangle\rfloor-1$.
\end{proposition}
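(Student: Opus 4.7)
The plan is to mirror the proof of Proposition \ref{hallpoly3} by evaluating $F^E_{SE'}=\Psi_1-\Psi_2$ from formula \eqref{Hall-number-ext-bdl-2}, with the extra combinatorial care required by an exceptional tube. A key reduction is Lemma \ref{preparing lemma}: since $1\le l_i\le p_i-1$, the conditions $\Hom(L(\vx),\Top S)\ne 0$ and $\Hom(L(\vw),\Top S)\ne 0$ pick out distinct simples in $\cT_i$ and are mutually exclusive. Applying $\Hom(-,\Top S)$ to $0\to L(\vw)\to E\to L(\vx)\to 0$ together with the hypothesis $\Hom(E,\Top S)\ne 0$ forces exactly one of them to hold. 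Since the roles of $\Psi_1$ and $\Psi_2$ interchange when one swaps $L(\vx)\leftrightarrow L(\vw)$, both alternatives yield the same final value, so it is enough to treat Case (I): $\Hom(L(\vx),\Top S)\ne 0$, in which Lemma \ref{preparing lemma} gives $n\not\equiv l_i\pmod{p_i}$ and $N=\lfloor(n-l_i)/p_i\rfloor$.

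I would then enumerate the admissible quadruples $(X_1,X_2,X_3,X_4)$ contributing to $\Psi_1$ and $\Psi_2$. Uniseriality of $S=S_{i,j}^{(n)}$ forces $X_4=S_{i,j}^{(s)}$ and $X_2=S_{i,j-s}^{(n-s)}$ for some $0\le s\le n$. In $\Psi_1$, $X_4$ is additionally a quotient of $L(\vx)$ (automatic) and $X_2$ a quotient of $L(\vw)$; by Corollary \ref{cor3.3} and the fact that $L(\vw)$ and $L(\vx)$ hit simples of $\cT_i$ shifted from each other by $l_i$, this last condition amounts to $s\equiv l_i\pmod{p_i}$. Combined with $0<s<n$, the admissible values are $s=l_i+tp_i$ for $0\le t\le N$, giving $N+1$ terms. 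In $\Psi_2$ the roles of $L(\vx)$ and $L(\vw)$ are swapped, so any nonzero $X_4$ with top $S_{i,j}$ would require $\Hom(L(\vw),S_{i,j})\ne 0$, impossible in Case (I); only $X_4=0$ contributes, yielding the unique tuple with $X_2=S$, $X_3=L(\vw)$, and $X_1$ the line bundle determined by $[X_1]+[S]=[L(\vx)]$.

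For each admissible tuple, Theorem \ref{hallpoly2} gives $F^{E'}_{X_3 X_1}=f_{\langle X_1,X_3\rangle}(q)$, while the outer Hall numbers $F^{L(\vw)}_{X_2 X_1}$, $F^{L(\vx)}_{X_4 X_3}$, $F^{S}_{X_4 X_2}$ each equal $1$ by Proposition \ref{hallpoly1}. Substituting the automorphism-group cardinalities and the Euler pairing $\langle X_4,X_1\rangle$ computed via Lemma \ref{EulerForm}, $\Psi_1$ collapses into a sum of the shape $\sum_{t=0}^{N}q^{t}f_{N-2t+c}(q)$ for an explicit integer $c$, while $\Psi_2$ becomes a single $f$-polynomial. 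The identity \eqref{sumf} from Proposition \ref{ssprop} then telescopes $\Psi_1$, and subtraction of $\Psi_2$ leaves $f_{N+1}(q)-f_{N}(q)+(-1)^N$. The main obstacle I anticipate is this bookkeeping: tracking the normal forms of $[X_1]$ and $[X_3]$ through Proposition \ref{Grothendieck group of line bundles in normal form} as $t$ varies, computing $\langle X_1,X_3\rangle$ as an affine function of $t$ with exactly the slope needed to trigger the telescoping, and verifying that $\Psi_2$ matches the residual boundary term produced by the identity. The degenerate case $N=-1$ forces $\Psi_1=0$; the single $\Psi_2$ contribution then evaluates directly (using $f_1(q)=-1$) to $-1$, giving $F^E_{SE'}=1$, as claimed.
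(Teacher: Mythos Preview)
Your overall strategy coincides with the paper's: compute $F^E_{SE'}=\Psi_1-\Psi_2$ via \eqref{Hall-number-ext-bdl-2}, restrict to Case~(I), parametrize the admissible quadruples using uniseriality of $S$ together with Corollary~\ref{cor3.3}, and collapse the resulting sum using Theorem~\ref{hallpoly2} and the identity \eqref{sumf}. For $N\ge 0$ your outline is essentially correct and matches the paper.

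There is, however, a genuine gap in the enumeration for $\Psi_1$, and it breaks your $N=-1$ case. You claim that ``$X_2$ is a quotient of $L(\vw)$'' forces $s\equiv l_i\pmod{p_i}$, but this congruence is required only when $X_2\ne 0$; when $s=n$ (i.e.\ $X_2=0$) the condition is vacuous. That boundary tuple---with $X_1=L(\vw)$, $X_4=S$, and $X_3$ the kernel of $L(\vx)\twoheadrightarrow S$---contributes an extra term $q^{N+1}f_{n_2}$ to $\Psi_1$, where $n_2=-\lceil (n-l_i)/p_i\rceil$. For $N\ge 0$ one has $n_2<0$, so this term vanishes and your answer survives; but for $N=-1$ (i.e.\ $1\le n<l_i$) one gets $n_2=0$, the term equals $1$, and it is the \emph{only} contribution, giving $\Psi_1=1$, not $0$. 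Your computation of $\Psi_2$ for $N=-1$ is also wrong: the single $X_4=0$ tuple yields $\Psi_2=f_{\langle X_1,X_3\rangle}$ with $\langle X_1,X_3\rangle=\langle [L(\vx)]-[S],[L(\vw)]\rangle=N=-1$, so $\Psi_2=f_{-1}=0$; in particular the assertion $f_1(q)=-1$ is false, since by \eqref{def-f(q)} one has $f_1(q)=q-2$. The correct breakdown is $\Psi_1=1$, $\Psi_2=0$, hence $F^E_{SE'}=1$.
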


\begin{proof} In view of \eqref{Hall-number-ext-bdl-1}, we now
calculate $F_{SE'}^E=\Psi_1-\Psi_2$. Since $\Hom(E,\mathrm{top}\, S)\ne0$,
we have
$$\Hom(L(\vx),\mathrm{top\,} S)\ne0\, \text{ or }\;\, \Hom(L(\vw),\mathrm{top}\, S)\neq 0.$$
We only consider the first case, and the second one can be treated similarly.

Without loss of generality, we may take $i=1$, i.e., $S=S_{1,j}^{(n)}$, and write
$$\vx-\vw=\sum\limits_{k=1}^{3}l_k\vx_k-\vc$$
with $1\leq l_k\leq p_k-1$ for $1\leq k\leq 3$. In this case,
 $\Hom(L(\vx),S_{1,j})\ne0$ and $\Hom(L(\vw),S_{1,j-l_1})\ne0$.

We first calculate the non-zero terms of
$$\Psi_1:=\frac{1}{q-1}\sum_{[X_1],[X_2],[X_3],[X_4]}q^{-\langle X_4,X_1\rangle}F_{X_3X_1}^{E'}F_{X_4X_2}^{S}\frac{a_{X_1}a_{X_2}a_{X_3}a_{X_4}}{a_{E'}a_{S}}
F_{X_2X_1}^{L(\vw)}F_{X_4X_3}^{L(\vx)}.$$
If $X_2=0$, then $X_4=S_{1,j}^{(n)}$, $X_1=L(\vw)$ and $X_3=L(\vx-n\vx_1)$. In this case,
\begin{align*}
n_1:&=-\langle X_{4},X_1\rangle=-\langle S_{1,j}^{(n)},L(\vw)\rangle=\dim\Ext^{1}(S_{1,j}^{(n)},L(\vw))\\
&=\dim\Ext^{1}(S_{1,j}^{(n)},S_{1,j-l_1}^{(n)})=\lfloor\frac{n-l_1}{p_1}\rfloor+1=N+1, \text{\;\;
(by\;\; Lemma \ref{preparing lemma})}\\
n_2:&=\langle X_1,X_3\rangle=\langle [L(\vw)],[L(\vx)]- [S_{1,{j}}^{(n)}]\rangle
=-\langle [L(\vw)], [S_{1,{j}}^{(n)}]\rangle\\
&=-\dim\Hom(L(\vw),S_{1,j}^{(n)})=-\dim\Hom(S_{1,j-l_1}^{(n)},S_{1,j}^{(n)})
=-\lceil\frac{n-l_1}{p_1}\rceil,
\end{align*}
and
$$\frac{a_{X_1}a_{X_2}a_{X_3}a_{X_4}}{a_{E'}a_{S}}
=\frac{a_{S}a_{L(\vw)}a_{L(\vx-n\vx_1)}}{a_{E'}a_{S}}=q-1.$$
If $X_2\neq 0$, then $X_4=S_{1,j}^{(l)}$ and $X_2=S_{1,j-l}^{(n-l)}$ for some
$0\leq l\leq n-1$. Hence, $\Hom(L(\vw), \text{top\,} X_2)\neq 0$, which implies
$j-l\equiv j-l_1\,(\mod p_1)$, and thus, $l=l_1+tp_1$ for some $t\geq 0$.
In this case, $X_1=L(\vw-(n-l)\vx_1)$ and $X_3=L(\vx-l\vx_1)$, and
\begin{align*}
n_3:&=-\langle X_{4},X_1\rangle=-\langle S_{1,j}^{(l)},L(\vw-(n-l)\vx_1)\rangle
\\&=t-\langle [S_{1,j}^{(l_1)}],[L(\vw)]-[S_{1,j}^{(n)}]+[S_{1,j}^{(l)}]\rangle\\
&=t-(-1)+(\lceil\frac{n}{p_1}\rceil-\lceil\frac{n-l_1}{p_1}\rceil-\lfloor\frac{l_1}{p_1} \rfloor)
-1\\
&=t-\lceil\frac{n-l_1}{p_1} \rceil+\lceil\frac{n}{p_1} \rceil,\\
n_4:&=\langle X_1,X_3\rangle=\langle [L(\vw-(n-l)\vx_1)],[L(\vx-l\vx_1)]\rangle\\
&=\langle [L(\vw)]-[S_{1,{j-l}}^{(n-l)}],{[L(\vx)]}-[S_{1,{j}}^{(l)}]\rangle\\
&=\langle [L(\vw)],{[L(\vx)]}\rangle-\langle [L(\vw)],[S_{1,{j}}^{(l)}]\rangle-\langle [S_{1,{j-l}}^{(n-l)}],{[L(\vx)]}\rangle+\langle [S_{1,{j-l}}^{(n-l)}],[S_{1,{j}}^{(l)}]\rangle\\
&=0-{\dim\Hom(L(\vw), S_{1,{j}}^{(l)})}+{\dim\Ext^1(S_{1,{j-l}}^{(n-l)},L(\vx))}+\langle [S_{1,{j-l}}^{(n-l)}],[S_{1,{j}}^{(l)}]\rangle\\
&=-{\dim\Hom(S_{1,j-l_1}^{(l)}, S_{1,{j}}^{(l)})}+{\dim\Ext^1(S_{1,{j-l}}^{(n-l)},S_{1,{j}}^{(n-l)})}
+\langle [S_{1,{j-l}}^{(n-l)}],[S_{1,{j}}^{(l)}]\rangle\\
 &=-t+\big(\lfloor\frac{n}{p_1}\rfloor-t\big)+\big(\lfloor\frac{n-l_1}{p_1}\rfloor-\lfloor\frac{n}{p_1}\rfloor\big)\\
&=N-2t,
\end{align*}
and
\begin{align*}
\frac{a_{X_1}a_{X_2}a_{X_3}a_{X_4}}{a_{E'}a_{S}}&=
\frac{(q-1)^2}{q-1}\times \frac{a_{S_{1,j}^{(l)}}a_{S_{1,j-l}^{(n-l)}}}{a_{S_{1,j}^{(n)}}}
\\&=(q-1)\frac{(q-1)q^{\dim\End(S_{1,j}^{(l)})-1}\cdot (q-1)q^{\dim\End(S_{1,{j-l}}^{(n-l)})-1}}{(q-1)q^{\dim\End(S_{1,{j}}^{(n)})-1}}
\\&=(q-1)^2q^{-1}q^{\dim\End(S_{1,{j}}^{(l)})+\dim\End(S_{1,{j-l}}^{(n-l)})-\dim\End(S_{1,{j}}^{(n)})}
\\&=(q-1)^2q^{-1}q^{\lceil\frac{l}{p_1} \rceil+\lceil\frac{n-l}{p_1} \rceil-\lceil\frac{n}{p_1} \rceil}
\\&=(q-1)^2q^{\lceil\frac{n-l_1}{p_1} \rceil-\lceil\frac{n}{p_1} \rceil}.
\end{align*}
Since $0\leq t=\frac{l-l_1}{p_1}\leq \frac{n-l_1-1}{p_1}$, we conclude that
\begin{align}
\label{green5}
\Psi_1=&\frac{1}{q-1}q^{n_1}f_{n_2}\cdot(q-1)+
\frac{1}{q-1}\sum_{t\ge0}^{\lfloor\frac{n-l_1-1}{p_1}\rfloor}q^{-n_3}f_{n_4}\cdot (q-1)^2q^{\lceil\frac{n-l_1}{p_1} \rceil-\lceil\frac{n}{p_1} \rceil}
\\\notag =&q^{n_1}f_{n_2}+
\sum_{t\ge0}(q-1)q^{t}f_{n_1-1-2t}
\\\notag =&q^{n_1}f_{n_2}+f_{n_1}+(-1)^{n_1-1}.
\end{align}
Here the last equality follows from \eqref{sumf}.

Next we consider the non-zero terms of
$$\aligned
\Psi_2:=\frac{1}{q-1}\sum_{[X_1],[X_2],[X_3],[X_4]}q^{-\langle X_4,X_1\rangle}F_{X_3X_1}^{X}F_{X_4X_2}^{X'}
\frac{a_{X_1}a_{X_2}a_{X_3}a_{X_4}}{a_{X}a_{X'}}F_{X_2X_1}^{L(\vx)}F_{X_4X_3}^{L(\vw)}.
\endaligned$$
Since $\Ext^{1}(L(\vw), L(\vx))=0$ and $\Hom(L(\vw), S_{1,j})=0$, we have $X_4=0$. Thus,
$X_2=S=S_{1,{j}}^{(n)}$, $X_3=L(\vw)$ and $X_1=L(\vx-n\vx_1)$. In this case,
\begin{align} \label{green6}	\Psi_2=&\frac{1}{q-1}F_{SL(\vx-n\vx_1)}^{L(\vx)}F_{L(\vw)L(\vx-n\vx_1)}^{E'}
\frac{a_{S}a_{L(\vw)}a_{L(\vx-n\vx_1)}}{a_{E'}a_{S}}=f_{n_5},
\end{align}
where
\[	n_5=\langle [L(\vx)]- [S_{1,{j}}^{(n)}], [L(\vw)]\rangle=\lfloor\frac{n-l_1}{p_1}\rfloor=N.\]
	
By Proposition \ref{simplify} together with \eqref{green5} and \eqref{green6}, we have
\[	F_{SE'}^E=\Psi_1-\Psi_2=q^{n_1}f_{n_2}+f_{n_1}+(-1)^{n_1-1}-f_{n_5}.	\]
If $N=-1$, i.e., $1\le n< l_1$, then $n_1=n_2=0$, and hence,
 \begin{align*}
     F_{SE'}^E=f_{0}+f_{0}+(-1)-f_{-1}=1.
 \end{align*}
 If $N\geq 0$, then $n>l_1$ since $n\not \equiv l_i\;(\mod p_i)$ by Lemma \ref{preparing lemma}.
Therefore, $n_2<0$ and thus,
 \begin{align*}
      F_{SE'}^E&=f_{N+1}-f_{N}+(-1)^{N}.
 \end{align*}
This finishes the proof.
\end{proof}

\begin{remark} Keep notations as above. Since $F_{SE'}^E$ is
a polynomial in $q=|\bfk|$ of degree $N+1$, we have $F_{SE'}^E(q)\neq 0$ when
$q\gg0$. In this case, there exists a short exact sequence
\begin{equation}\label{exact} 0\lra E'\lra E\lra S\lra 0 \end{equation}
if and only if  $[E']+[S]=[E]$ and $\Hom(E,\mathrm{top}\, S)\ne0$.

However, this is not true in general. For example, let $\mathbb{X}$ be
the weighted projective line of weight type $(2,2,2)$. Let $S=S_{i,j}^{(2)}$ be an indecomposable torsion
sheaf taken from an exceptional tube and $E$ be the Auslander
bundle given by the non-split exact sequence
$$0\lra \co(\vw)\lra E\lra \co\lra 0.$$
Then we have
$\langle \tau E,E\rangle=3$, $[\tau E]+[S]=[E]$, and $\Hom(E,\mathrm{top}\, S)\ne0$. By Proposition \ref{hallpoly4} we obtain $N=0$ and
$F_{SE'}^E=f_1(q)=q-2$. Hence, if $\bfk=\bbF_2$, then $F_{SE'}^E=0$. Therefore,
there does not exist such an exact sequence of the form \eqref{exact}.
\end{remark}

\bigskip

\section{Derived Hall algebras and Hall polynomials for tame quivers}

In this section we recall from \cite{Toen,XX} the definition of the derived Hall algebra
of the bounded derived category of a finitary abelian category. In terms of the
isomorphism between derived Hall algebras of tame quivers and weighted projective lines of
domestic type, we apply the results in the previous sections to obtain Hall polynomials for
representations of tame quivers which are calculated in \cite{Sz0, SS2} by a case-by-case
analysis.

Let $\ca$ be a finitary abelian category over a finite field $\mathbf k$ and
$D^b(\mathcal{A})$ be the bounded derived category of $\mathcal{A}$. For any three
objects $X,Y,L$ in $D^b(\mathcal{A})$, let $\Hom_{D^b(\mathcal{A})}(X,Y)_L$ denote the
subset of $\Hom_{D^b(\mathcal{A})}(X,Y)$ consisting of morphisms $X \to Y$ whose cone is
isomorphic to $L$, and set
$$\{X, Y\}:=\prod_{i>0}|\Hom_{D^b(\mathcal{A})}(X[i],Y)|^{(-1)^i}.$$
For $X\in D^b(\mathcal{A})$, let $\widetilde{a}_{X}:=|\mathrm{Aut}_{D^b(\mathcal{A})}(X)|$
be the cardinality of the automorphism group. Recall from \cite{Toen,XX} that
the \emph{derived Hall number} associated with three objects $L,X,Y$ in $D^b(\mathcal{A})$ is defined as
\begin{align}\label{definition of derived Hall number}
G_{XY}^L:=\frac{|\Hom(L,X)_{Y[1]}|}{\widetilde{a}_X}\cdot\frac{\{L,X\}}{\{X,X\}}
=\frac{|\Hom(Y,L)_{X}|}{\widetilde{a}_Y}\cdot \frac{\{Y,L\}}{\{Y,Y\}},
\end{align}
which can be described by the following derived Riedtmann--Peng formula
(c.f., \cite[Prop. 3.3]{SCX}):
\[ G_{XY}^L=\frac{|\Hom(X,Y[1])_{L[1]}|\{X,Y[1]\}\{L,L\}}{\{X,X\}\{Y,Y\}}\cdot\frac{a_{L}}{a_{X}a_Y}.\]
The derived Hall algebra $\mathcal{DH}(\mathcal{A})$ of $\ca$
is by definition the $\mathbb{Q}$-vector space with the basis
$\{u_{[X]} \mid [X] \in \mathrm{Iso}(D^b(\mathcal{A}))\}$, endowed with multiplication defined by
\[u_{[X]}\cdot u_{[Y]}=\sum_{[L]\in{\rm{Iso}}(D^b(\mathcal{A}))}G_{XY}^L\, u_{[L]}. \]
It is an associative algebra, that is, for
given objects $X,Y,Z,M$ in $D^b(\mathcal{A})$, we have
\begin{align}\label{associativity of derived Hall number}
\sum_{[L]\in{\rm{Iso}}(D^b(\mathcal{A}))} G_{XY}^{L}G_{LZ}^{M}
 =\sum_{[L]\in{\rm{Iso}}(D^b(\mathcal{A}))} G_{XL}^{M}G_{YZ}^{L},
\end{align}
where both sums are taken over the set ${\rm{Iso}}(D^b(\mathcal{A}))$ of isomorphism
classes of objects in $D^b(\mathcal{A})$.

Derived Hall algebras are closely related to Hall algebras. Indeed, we have
the following embedding of algebras.

\begin{lemma} Let $\mathcal{A}$ be a hereditary abelian category. Then there exists
an algebra embedding
\[   \varphi:\mathcal{H}(\mathcal{A})\longrightarrow \mathcal{DH}(\mathcal{A}),     \]
taking
\[ \qquad \qquad   [X]\longmapsto \{X,X\}\cdot u_{[X]}.     \]
\end{lemma}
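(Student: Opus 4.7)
The plan is to show that under the hereditary hypothesis, the map $\varphi$ reduces to the inclusion of basis elements (because the scalar $\{X,X\}$ turns out to be $1$), and then to verify directly that this basis inclusion preserves multiplication by matching derived Hall numbers with Hall numbers.

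First I would observe that for any $X,Y \in \mathcal{A}$, regarded as stalk complexes concentrated in degree $0$, one has $\Hom_{D^b(\mathcal{A})}(X[i],Y) \cong \Ext^{-i}(X,Y) = 0$ for every $i \geq 1$, since negative Ext-groups vanish in an abelian category. Consequently $\{X,Y\} = 1$ for all $X,Y \in \mathcal{A}$; in particular $\{X,X\} = 1$, so $\varphi([X]) = u_{[X]}$, and moreover $\widetilde{a}_X = a_X$.

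The main step is to compare $G_{AB}^L$ with $F_{AB}^L$ for $A,B \in \mathcal{A}$ and $L \in D^b(\mathcal{A})$. I would first establish the vanishing: if $\Hom_{D^b(\mathcal{A})}(L,A)_{B[1]}$ is nonempty, then rotating the defining triangle to $B \to L \to A \to B[1]$ and taking the long exact cohomology sequence (using that $H^i(A) = H^i(B) = 0$ for $i \neq 0$) forces $H^i(L) = 0$ for $i \neq 0$ and produces a short exact sequence $0 \to B \to H^0(L) \to A \to 0$ in $\mathcal{A}$. Hereditariness of $\mathcal{A}$ then gives $L \cong H^0(L) \in \mathcal{A}$, so $G_{AB}^L = 0$ whenever $L$ does not lie in the heart. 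When $L \in \mathcal{A}$, I would identify $\Hom_{D^b(\mathcal{A})}(L,A)_{B[1]}$ with the set of surjections $L \twoheadrightarrow A$ in $\mathcal{A}$ whose kernel is isomorphic to $B$; each subobject $X \subseteq L$ with $X \cong B$ and $L/X \cong A$ contributes precisely $a_A$ such surjections (one for each iso $L/X \xrightarrow{\sim} A$), giving
\[
|\Hom_{D^b(\mathcal{A})}(L,A)_{B[1]}| = F_{AB}^L \cdot a_A.
\]
Substituting into the definition of $G_{AB}^L$ and using the identities from the previous paragraph, this yields $G_{AB}^L = F_{AB}^L$.

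Combining these calculations, $\varphi([A]) \cdot \varphi([B]) = u_{[A]} \cdot u_{[B]} = \sum_{[L]} G_{AB}^L \, u_{[L]} = \sum_{[M] \in \mathrm{Iso}(\mathcal{A})} F_{AB}^M \, u_{[M]} = \varphi([A] \cdot [B])$, which shows $\varphi$ is a homomorphism; injectivity is automatic since $\varphi$ sends the basis of $\mathcal{H}(\mathcal{A})$ bijectively onto a subset of the basis of $\mathcal{DH}(\mathcal{A})$. The main obstacle I anticipate is the careful identification of $\Hom_{D^b(\mathcal{A})}(L,A)_{B[1]}$ with surjections in $\mathcal{A}$ having a prescribed isomorphism class of kernel, and the counting thereof; everything else is formal once the values of $\{-,-\}$ and $\widetilde{a}_X$ on objects of $\mathcal{A}$ are pinned down.
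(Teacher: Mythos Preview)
Your proof is correct. The overall strategy---show that $G_{AB}^L$ vanishes for $L\notin\mathcal{A}$ and equals $F_{AB}^L$ for $L\in\mathcal{A}$, then conclude that $\varphi$ respects products---is exactly what the paper has in mind. Your observation that $\{X,Y\}=1$ for $X,Y\in\mathcal{A}$ (so that $\varphi$ is literally the basis inclusion) is right and simplifies matters.

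Where you differ from the paper is in \emph{which} formula for $G$ you invoke. The paper appeals to the derived Riedtmann--Peng formula
\[
G_{XY}^L=\frac{|\Hom(X,Y[1])_{L[1]}|\,\{X,Y[1]\}\,\{L,L\}}{\{X,X\}\{Y,Y\}}\cdot\frac{a_L}{a_Xa_Y}
\]
together with the identification $|\Hom_{D^b(\mathcal{A})}(X,Y[1])_{L[1]}|=|\Ext^1_{\mathcal{A}}(X,Y)_L|$ and $\{X,Y[1]\}=|\Hom(X,Y)|^{-1}$, which matches directly with the ordinary Riedtmann--Peng formula for $F_{XY}^L$ and makes the factors $\{X,X\},\{Y,Y\},\{L,L\}$ cancel automatically against the coefficients in $\varphi$. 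You instead use the defining expression $G_{XY}^L=|\Hom(L,X)_{Y[1]}|/\widetilde a_X\cdot\{L,X\}/\{X,X\}$ and count surjections $L\twoheadrightarrow X$ with kernel isomorphic to $Y$. Your route is more hands-on and requires the extra step of showing $L$ must lie in the heart (where you use hereditariness explicitly), while the paper's route sidesteps this because every class in $\Ext^1_{\mathcal{A}}(X,Y)$ already has middle term in $\mathcal{A}$. Both arguments are short; the paper's is slightly slicker because the two Riedtmann--Peng formulas line up term by term.
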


\begin{proof} This follows directly from the definitions and the equalities $$|\Hom_{D^b(\mathcal{A})}(X,Y[1])_{L[1]}|=|\Ext_\mathcal{A}^1(X,Y)_L|,\;\,\forall\, X, Y,L\in {\mathcal A}.$$
\end{proof}

It is well-known that for each tame quiver $Q$ without oriented cycles, there is
a weighted projective line $\X=\X_Q$ of domestic type such that the module category
$\bfk Q\mbox{-mod}$ of finite dimensional left modules over the path algebra
$\bfk Q$ and $\coh\X$ are derived equivalent. Note that
the correspondence between the types of $Q$ and $\X$ is given as follows
\[\begin{tabular}{|c|c|c|c|c|c|}
		\hline
 &&&&&\\
$Q$	&$\widetilde{\mathbb{A}}_{p,q}$&
 $\widetilde{\mathbb{D}}_{n}$&
 $\widetilde{\mathbb{E}}_{6}$ & $\widetilde{\mathbb{E}}_{7}$ &
 $\widetilde{\mathbb{E}}_{8}$\\
		\hline
 &&&&&\\
$\X$ &$(p,q)$&	$(2,2,n-2)$	 &$(2,3,3)$ &$(2,3,4)$  & $(2,3,5)$\\\hline
\end{tabular}\]	

\bigskip

From now onwards, let $Q$ be a tame quiver without oriented cycles and
$\mathrm{K}_0(\bfk Q)$ denote the Grothendieck group of $\bfk Q\mbox{-mod}$.
By \cite{DR}, the module category $\bfk Q\mbox{-mod}$ admits full subcategories $\mathscr{P,R}$ and
$\mathscr{I}$ which consist of preprojective, regular and preinjective $\bfk Q$-modules,
respectively. Then under the canonical derived equivalence $D^b(\bfk Q\mbox{-mod})\cong D^b(\coh\X)$,
$\mathscr{P,I}$ correspond to the vector bundles or their shifts, and $\mathscr{R}$ corresponds to
the torsion sheaves in $\coh\X$. We will simply identify $\mathscr{R}$ with $\coh_0$-$\X$
and use the same notation to denote the tubes in $\mathscr{R}$ as in $\coh_0$-$\X$.
In particular, for an indecomposable regular $\bfk Q$-module $R$ in a tube $\cT$, by $\mbox{top\,}R$ we
denote its relative top in $\cT$.

By \cite[Prop. 5]{Cr10}), there is an isomorphism
$\mathcal{DH}(\mathcal{\coh \X}) \cong \mathcal{DH}(\bfk Q\mbox{-mod})$ of derived
Hall algebras. Thus, we can reformulate the results in Sections 3 and 4 in terms
of representations of $Q$. This indeed gives several results on Hall polynomials
obtained in \cite{Sz2,SS2}; see the following propositions. Since their proofs are very similar,
we omit most of them.

\begin{proposition} 
 Assume that $P,P'$ are both indecomposable modules of defect $-1$ in $\mathscr{P}$ and
$R=\bigoplus_k R_k$ is a module in $\mathscr{R}$ with indecomposable summands $R_k$ from
pairwise distinct tubes. Then there exists a short exact sequence
\[ 	0\lra P\lra P'\lra R\lra 0 	\]
if and only if $[P]+[R]=[P']$ in $\mathrm{K}_0(\bfk Q)$ and $\Hom(P',\mathrm{top\,} R_k)\ne0$.
In this case, we have
\[	 F_{RP}^{P'}=1. 	 \]
\end{proposition}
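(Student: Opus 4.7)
My plan is to reduce this proposition to Proposition~\ref{hallpoly1} via the derived equivalence
\[ D^b(\bfk Q\mbox{-mod}) \simeq D^b(\coh\X_Q) \]
identified earlier in the paper, where $\X_Q$ is the associated domestic weighted projective line. Under this equivalence, the regular subcategory $\mathscr{R}$ corresponds to $\coh_0\X_Q$, while indecomposable preprojective modules of defect $-1$ correspond to line bundles, since the defect on $\mathrm{K}_0(\bfk Q)$ matches, up to sign, the rank function on $\mathrm{K}_0(\X_Q)$, so rank-one bundles are exactly the images of the defect $-1$ preprojectives.

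Given this dictionary, I let $L, L'$ be the line bundles corresponding to $P, P'$ and $S = \bigoplus_k S_k$ the torsion sheaf corresponding to $R$, with each $S_k$ matching $R_k$ and lying in the same tube. A short exact sequence $0 \to P \to P' \to R \to 0$ will translate bijectively to $0 \to L \to L' \to S \to 0$ in $\coh\X_Q$, and the conditions $[P] + [R] = [P']$ and $\Hom(P', \mathrm{top\,}R_k) \neq 0$ become $[L] + [S] = [L']$ and $\Hom(L', \mathrm{top\,}S_k) \neq 0$, since the derived equivalence preserves classes in $\mathrm{K}_0$, Hom-dimensions, and the tube structure. Applying Proposition~\ref{hallpoly1} then yields both the existence criterion and the identity $F_{SL}^{L'} = 1$.

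For the final step, transferring $F_{SL}^{L'} = 1$ back to $F_{RP}^{P'} = 1$, I will use the embedding $\varphi : \mathcal{H}(\mathcal{A}) \hookrightarrow \mathcal{DH}(\mathcal{A})$ from the preceding lemma together with the isomorphism $\mathcal{DH}(\coh\X_Q) \cong \mathcal{DH}(\bfk Q\mbox{-mod})$ of Cramer. Since $\varphi([X]) = \{X,X\} \cdot u_{[X]}$ and both factors are intrinsic to the derived category, Hall structure constants among the images of $L, L', S$ coincide with those among $P, P', R$, so $F_{RP}^{P'} = F_{SL}^{L'} = 1$.

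The main obstacle will be pinning down the derived equivalence precisely enough that defect $-1$ preprojective $\bfk Q$-modules land on honest line bundles (rather than on a shift or twist), and that short exact sequences in $\bfk Q\mbox{-mod}$ sit inside $D^b(\coh\X_Q)$ again as short exact sequences of sheaves. Should this proviso prove awkward in a particular tame type, a self-contained backup is to mimic the proof of Proposition~\ref{hallpoly1} directly inside $\bfk Q\mbox{-mod}$: for a defect $-1$ module $P'$, the existence of an epimorphism $P' \to R_k$ reduces to $\Hom(P', \mathrm{top\,}R_k) \neq 0$; the $R_k$ must lie in pairwise distinct tubes for such an epimorphism to exist globally; the kernel has defect $-1$ by additivity; and it is uniquely determined by its class in $\mathrm{K}_0(\bfk Q)$, yielding $F_{RP}^{P'} = 1$.
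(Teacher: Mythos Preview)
Your proposal is correct and follows essentially the same route as the paper: transfer $P,P',R$ across the derived equivalence to line bundles $L,L'$ and a torsion sheaf $S$, invoke Proposition~\ref{hallpoly1}, and pull the Hall number back via the derived Hall algebra isomorphism. The paper's proof is terser (it simply asserts that $G(P),G(P')$ are line bundles and that $F_{RP}^{P'}=F_{G(R)G(P)}^{G(P')}$), so your added care about why defect $-1$ preprojectives land on genuine line bundles and how the structure constants match up is welcome but not a departure in strategy.
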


\begin{proof} Consider the canonical derived equivalence by
$$G:\mathcal{DH}(\bfk Q\mbox{-mod})\longrightarrow \mathcal{DH}(\mathcal{\coh \X}).$$
Then $G(P),G(P')$ are line bundles with $G([P])+G([R])=G([P'])$, and $G(R)=\bigoplus_k G(R_k)$
satisfies $\Hom(G(P'),\mathrm{top\,} G(R_k))\ne0$.  By Proposition \ref{hallpoly1},
we have $F_{G(R)G(P)}^{G(P')}=1$ and thus, $F_{RP}^{P'}=1$.
\end{proof}

\begin{proposition} 
Assume that $P,P'$ are both indecomposable modules of defect $-1$ in $\mathscr{P}$
and $R$ is an indecomposable module in $\mathscr{R}$. Then there exist a module $R'$
(may be zero) in $\mathscr{R}$ and a short exact sequence
\[ 0\lra P\lra P'\oplus R'\lra R\lra 0 \]
if and only if $R'$ is a submodule of $R$ such that there is a short exact sequence
\[ 	0\lra P\lra P'\lra R/R'\lra 0. 	\]
In this case, we have
\[  F_{RP}^{P'\oplus R'}=\begin{cases}
        |\Hom(P,R)|,& \text{ if } R\cong R';\\
        a_{R'},& \text{ if } R\ncong R'.
     \end{cases}     \]
\end{proposition}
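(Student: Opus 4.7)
My plan is to mirror the proof of the preceding proposition and transport everything through the derived equivalence
$$G:\mathcal{DH}(\bfk Q\mbox{-mod})\longrightarrow \mathcal{DH}(\coh\X).$$
First, I would identify the images: the modules $P,P'\in \mathscr{P}$ of defect $-1$ correspond to line bundles $L=G(P)$ and $L'=G(P')$; the indecomposable regular module $R$ corresponds to an indecomposable torsion sheaf $S=G(R)$; and any $R'\in\mathscr{R}$ (possibly zero) corresponds to a torsion sheaf $S'=G(R')$ on $\X$. Since the images of $\mathscr{P}\cup\mathscr{R}$ under $G$ already lie in $\coh\X$ (not in any shifted copy), a short exact sequence $0\lra P\lra P'\oplus R'\lra R\lra 0$ in $\bfk Q\mbox{-mod}$ will correspond bijectively under $G$ to a short exact sequence $0\lra L\lra L'\oplus S'\lra S\lra 0$ in $\coh\X$.

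Next I would invoke Proposition \ref{hallpoly1'} applied to $L,L',S,S'$: it tells us that such a short exact sequence exists in $\coh\X$ if and only if $S'$ is a subsheaf of $S$ fitting into an exact sequence $0\lra L\lra L'\lra S/S'\lra 0$. Pulling this back through $G^{-1}$ gives precisely the claimed criterion that $R'$ be a submodule of $R$ with $0\lra P\lra P'\lra R/R'\lra 0$ exact.

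For the numerical formula, I would use that the algebra embedding $\mathcal{H}(-)\hookrightarrow\mathcal{DH}(-)$ from the lemma is compatible with $G$, so the Hall numbers agree: $F_{RP}^{P'\oplus R'}=F_{SL}^{L'\oplus S'}$. The ``in particular'' clause of Proposition \ref{hallpoly1'} then yields $F_{SL}^{L'\oplus S'}=|\Hom(L,S)|=|\Hom(P,R)|$ when $R\cong R'$ (so $S\cong S'$), and $F_{SL}^{L'\oplus S'}=a_{S'}=a_{R'}$ when $R\not\cong R'$. The main obstacle, exactly as in the preceding proposition, will be the careful verification that distinguished triangles among the images of $\mathscr{P}\cup\mathscr{R}$ really do arise from honest short exact sequences in $\coh\X$; this is however automatic since all three vertices lie in the heart of the standard $t$-structure, so no genuinely new technical work is needed beyond invoking Proposition \ref{hallpoly1'}.
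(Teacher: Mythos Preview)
Your proposal is correct and follows exactly the approach the paper intends: the paper explicitly states that the proofs of Propositions 5.2--5.7 are all very similar and omits the details after giving the model argument for Proposition 5.2, which is precisely the transport-through-$G$ argument you outline. Your invocation of Proposition \ref{hallpoly1'} and the observation that all objects involved lie in the heart of the standard $t$-structure (so that distinguished triangles correspond to genuine short exact sequences and Hall numbers are preserved) is exactly what is needed.
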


The two propositions above are a supplement of \cite[Thm 4.1]{Sz2}
where $R$ is a direct sum of indecomposable representations taken from a fixed
homogeneous tube. The following result is a reformulation of Proposition \ref{hallpoly1''}.

\begin{proposition} Assume that $P,P'$ are both indecomposable modules of defect
$-1$ in $\mathscr{P}$ and $R,R',R''$ are indecomposable modules in $\mathscr{R}$
(or zero modules). Then there exists a short exact sequence
 \[ 	0\lra P\oplus R\lra P'\oplus R'\lra R''\lra 0 	\]
 if and only if there are modules $R_1,R_2$ and short exact sequences
 \[  0\to R\to R'\to R_1\to 0,\quad 0\to P\to P'\to R_2\to 0,
 \quad    0\to R_1\to R''\to R_2\to 0.\]
    In this case, we have
 \[ F_{R''P\oplus R}^{P'\oplus R'}=\begin{cases}
        |\Hom(P,R'')|,& \text{ if } P\cong P';\\
        a_{R/R'},& \text{ if } P\ncong P'.
     \end{cases}     \]
\end{proposition}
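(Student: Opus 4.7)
The plan is to deduce this proposition directly from Proposition~\ref{hallpoly1''} by transporting across the canonical derived equivalence $F\colon D^b(\bfk Q\text{-}\mathrm{mod}) \xrightarrow{\sim} D^b(\coh\X)$, following the same template used for the two preceding propositions in this section.

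First, I would invoke the standard dictionary for this equivalence: indecomposable preprojective modules of defect $-1$ in $\mathscr{P}$ are sent by $F$ bijectively to line bundles in $\coh\X$ (they form the canonical tilting slice), and the regular subcategory $\mathscr{R}$ is sent onto $\coh_0\X$, with indecomposables mapping to indecomposables. Hence $L:=F(P)$ and $L':=F(P')$ are line bundles, while $S:=F(R)$, $S':=F(R')$, and $S'':=F(R'')$ are indecomposable torsion sheaves (or zero).

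Second, I would use the isomorphism $\mathcal{DH}(\bfk Q\text{-}\mathrm{mod}) \cong \mathcal{DH}(\coh\X)$ of derived Hall algebras induced by $F$ (Cramer's theorem, already cited in this section), combined with the embedding $\mathcal{H}(\ca) \hookrightarrow \mathcal{DH}(\ca)$ provided by the lemma at the beginning of Section~5. Since the normalizing factors $\{X,X\}$ depend only on the derived isomorphism class, they agree on both sides and cancel out, so the ordinary Hall number $F^{P'\oplus R'}_{R''\,P\oplus R}$ is identified with $F^{L'\oplus S'}_{S''\,L\oplus S}$. Because both middle terms $P\oplus R$ and $L\oplus S$ sit in the respective hearts, short exact sequences in $\bfk Q\text{-}\mathrm{mod}$ with the prescribed outer and middle terms correspond bijectively under $F$ to the analogous short exact sequences in $\coh\X$.

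Third, I would apply Proposition~\ref{hallpoly1''} verbatim. Existence of the desired sequence is equivalent to the existence of modules $R_1, R_2$ (the $F^{-1}$-images of the sheaves $S_1, S_2$) fitting into the three listed short exact sequences, and the closed formula in the ``in this case'' part is read off from the dichotomy $L\cong L'$ versus $L\not\cong L'$, which pulls back to $P\cong P'$ versus $P\not\cong P'$ and yields $|\Hom(P,R'')|$ or $a_{R'/R}$ accordingly. The only point requiring genuine care is tracking the embedding of Hall into derived Hall algebras so that the additive decomposition of the middle term is matched faithfully and no combinatorial factors intrude; but this is exactly the content of the embedding lemma, so I do not expect any substantive obstacle beyond bookkeeping, since all of the essential calculation has already been carried out in Section~3.
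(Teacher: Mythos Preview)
Your proposal is correct and follows precisely the approach the paper intends: the statement is explicitly introduced as ``a reformulation of Proposition~\ref{hallpoly1''}'', and the paper's own argument (the proof of Proposition~5.2 being the template, with the others omitted as ``very similar'') is exactly to transport across the derived equivalence $D^b(\bfk Q\text{-}\mathrm{mod})\simeq D^b(\coh\X)$, identify the Hall numbers via the embedding $\mathcal{H}\hookrightarrow\mathcal{DH}$ and Cramer's isomorphism, and read off the result from the sheaf side. Incidentally, your version $a_{R'/R}$ is the correct transcription of $a_{S'/S}$ from Proposition~\ref{hallpoly1''}; the $a_{R/R'}$ appearing in the stated proposition is a typo.
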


The following three propositions are reformulations of Theorem \ref{hallpoly2},
Proposition \ref{hallpoly3}, and Proposition \ref{hallpoly4}, respectively.

\begin{proposition} [{\rm\cite[Thm. 7.1]{SS2}}]
Assume that $P_1,P_2$ are indecomposable modules of defect $-1$ in $\mathscr{P}$ and
$P$ is an indecomposable module of defect $-2$ in $\mathscr{P}$. Assume
$[P_1]+[P_2]=[P]$ in $\mathrm{K}_0(\bfk Q)$ and the Euler form
$\langle P_1,P_2\rangle=n\ge0$. Then
\[ F_{P_2P_1}^P=f_n(q). \]
\end{proposition}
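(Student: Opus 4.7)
The plan is to transport the statement to the category of coherent sheaves on the corresponding weighted projective line $\X = \X_Q$ of domestic type via the derived equivalence $G: D^b(\bfk Q\text{-mod}) \to D^b(\coh\X)$ and then invoke Theorem \ref{hallpoly2}, which is the same statement for line bundles and extension bundles.

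First, I would identify the images $L_1 := G(P_1)$, $L_2 := G(P_2)$, $E := G(P)$. Under the standard correspondence between the preprojective component of $\bfk Q\text{-mod}$ and the preprojective part of $\vect\X$, indecomposable preprojective modules correspond to vector bundles, and the defect of a preprojective module equals the negative of the rank of the corresponding bundle. Hence $L_1, L_2$ are line bundles and $E$ is an indecomposable rank-$2$ bundle, i.e., an extension bundle. The hypothesis $[P_1] + [P_2] = [P]$ transfers to $[L_1] + [L_2] = [E]$ in $K_0(\X)$, and the Euler form is preserved by the derived equivalence, so $\langle L_1, L_2 \rangle = n \ge 0$.

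Next, I would translate the Hall number. Combine the isomorphism of derived Hall algebras $\mathcal{DH}(\bfk Q\text{-mod}) \cong \mathcal{DH}(\coh\X)$ from \cite{Cr10} with the algebra embedding $\varphi: \mathcal{H}(\mathcal{A}) \hookrightarrow \mathcal{DH}(\mathcal{A})$ given above, which sends $[X] \mapsto \{X, X\} \cdot u_{[X]}$. Since the three objects $P_1, P_2, P$ live in the single heart $\bfk Q\text{-mod}$ and their images $L_1, L_2, E$ live in the heart $\coh\X$, the scalar factors $\{P_i, P_i\}$ and $\{L_i, L_i\}$ match (both are determined by Ext-dimensions, preserved by $G$). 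Comparing the coefficients of the basis element $u_{[E]} = u_{G[P]}$ in the products $u_{[P_2]} \cdot u_{[P_1]}$ and $u_{[L_2]} \cdot u_{[L_1]}$ yields $F_{P_2 P_1}^{P} = F_{L_2 L_1}^{E}$.

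Finally, Theorem \ref{hallpoly2} applied to the line bundles $L_1, L_2$ and the extension bundle $E$ gives $F_{L_2 L_1}^{E} = f_{\langle L_1, L_2\rangle}(q) = f_n(q)$, which completes the proof. The main bookkeeping obstacle is justifying the identification $F_{P_2 P_1}^P = F_{L_2 L_1}^E$ cleanly — one must either argue through the two-step passage (Hall algebra $\hookrightarrow$ derived Hall algebra, then derived equivalence) as sketched, or alternatively observe directly that the short exact sequences $0 \to P_1 \to P \to P_2 \to 0$ in $\bfk Q\text{-mod}$ correspond bijectively to the short exact sequences $0 \to L_1 \to E \to L_2 \to 0$ in $\coh\X$ via $G$, since all six objects already lie in respective hearts so the connecting triangle is forced to be a short exact sequence on each side. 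Either way, the statement reduces immediately to the already-established Theorem \ref{hallpoly2}.
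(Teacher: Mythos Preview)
Your proposal is correct and follows exactly the paper's intended approach: the paper states that this proposition is a reformulation of Theorem \ref{hallpoly2} via the derived equivalence, and its (omitted) proof is meant to mirror the short argument given for Proposition 5.2, which is precisely the transport-through-$G$ argument you wrote out. Your identification of defect $-1$ (resp.\ $-2$) preprojectives with line bundles (resp.\ extension bundles) and your justification of $F_{P_2P_1}^P = F_{L_2L_1}^E$ via the embedding $\mathcal{H}(\mathcal{A})\hookrightarrow\mathcal{DH}(\mathcal{A})$ together with the derived Hall algebra isomorphism are exactly what the paper has in mind.
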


\begin{proposition} [{\rm\cite[Thm. 7.3]{SS2}}]
 Assume that $P,P'$ are indecomposable modules of defect $-2$ in
 $\mathcal{P}$ and $R$ is an indecomposable module in $\mathscr{R}$
 of class $dn\delta$ taken from a homogeneous tube $\mathcal{T}_z$ with
 $\mathrm{deg}\ z=d$. Assume $[P']+[R]=[P]$ in $\mathrm{K}_0(\bfk Q)$. Then
 \[ F_{RP'}^P=f_{dn}-f_{dn-1}+(q^d-1)\sum_{t\ge1}q^{d(t-1)}(f_{d(n-2t)}-f_{d(n-2t)-1}).	\]
\end{proposition}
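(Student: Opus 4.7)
The plan is to derive this as a direct translation of Proposition \ref{hallpoly3} through the derived equivalence between $D^b(\bfk Q\text{-mod})$ and $D^b(\coh\X)$, exploiting the algebra isomorphism $\mathcal{DH}(\bfk Q\text{-mod})\cong \mathcal{DH}(\coh\X)$ together with the embedding $\varphi:\mathcal{H}(-)\hookrightarrow \mathcal{DH}(-)$ from the lemma above. The key observation is that the Hall number $F_{RP'}^P$ is encoded in the product $u_{[R]}\cdot u_{[P']}$ inside the derived Hall algebra, so if the images of $P,P',R$ under the equivalence land on an extension bundle $E$, an extension bundle $E'$, and an indecomposable torsion sheaf $S_z^{(n)}$ respectively, then $F_{RP'}^P=F_{SE'}^{E}$ in $\coh\X$.

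First I would set up the correspondence. Under the canonical derived equivalence $G:D^b(\bfk Q\text{-mod})\xrightarrow{\sim} D^b(\coh\X)$, the preprojective and preinjective components of the Auslander--Reiten quiver match (up to shift) with the vector bundles in $\coh\X$, while $\mathscr{R}$ is identified with $\coh_0\X$ preserving tubes and the rational degree. Since an indecomposable $\bfk Q$-module of defect $-2$ in $\mathscr{P}$ is rigid with $\dim\End=1$ and has rank two with respect to the slope function coming from $\X$, $G(P)$ and $G(P')$ are exceptional vector bundles of rank $2$; by the Kussin--Lenzing--Meltzer theorem these are extension bundles. Likewise $G(R)=S_z^{(n)}$ with $\deg z=d$, matching the class $dn\delta$ on both sides.

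Next, I would use the fact that the embedding $\varphi$ preserves Hall numbers when all objects sit in the abelian category and no shifts intervene: since $R,P',P$ are all in $\bfk Q\text{-mod}$ with $0\to P'\to P\to R\to 0$ an extension in the abelian category (forced by $[P']+[R]=[P]$ and the nonvanishing of the Hall number), and correspondingly $G(P'),G(P),G(R)$ sit in $\coh\X$ with the analogous extension, one obtains $F_{RP'}^P=F_{SE'}^E$. Then Proposition \ref{hallpoly3} applied to $E=G(P)$, $E'=G(P')$, $S=G(R)=S_z^{(n)}$ yields the stated formula verbatim.

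The step requiring the most care will be verifying that the class equality $[P']+[R]=[P]$ in $\mathrm{K}_0(\bfk Q)$ really corresponds to $[E']+[S]=[E]$ in $\mathrm{K}_0(\X)$ and that the translation $G$ sends the tube $\mathcal{T}_z\subset\mathscr{R}$ of degree $d$ to a homogeneous tube of the same degree in $\coh_0\X$; this is essentially bookkeeping using the Grothendieck group isomorphism induced by $G$. Once the identification of objects and classes is in place, no further computation is needed, as the formula in the sheaf setting is identical in form to the one required, with the variable $n$ being the length of $S_z^{(n)}$ in both categories.
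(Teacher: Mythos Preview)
Your proposal is correct and follows essentially the same approach as the paper: the paper states this proposition as a direct reformulation of Proposition~\ref{hallpoly3} via the derived equivalence $D^b(\bfk Q\text{-mod})\cong D^b(\coh\X)$ and the algebra isomorphism $\mathcal{DH}(\bfk Q\text{-mod})\cong\mathcal{DH}(\coh\X)$, without giving a separate proof. Your identification of defect $-2$ preprojectives with extension bundles and of the homogeneous regular module with $S_z^{(n)}$, together with the Grothendieck group bookkeeping, is exactly the argument the paper has in mind.
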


\begin{proposition} [{\rm \cite[Thm. 7.5]{SS2}}]
 Assume that $P,P'$ are indecomposable modules of defect $-2$
in $\mathscr{P}$ and $R$ is an indecomposable module in $\mathscr{R}$ taken
from an exceptional tube. Assume $[P']+[R]=[P]$ and $\Hom(P,\mathrm{top\,} S)\ne0$,
and set $N=\lfloor\frac{1}{2}\langle P',P\rangle\rfloor-1.$
Then
  \begin{align*}
     F_{RP'}^P=\begin{cases}
        1,& \text{ if }N=-1;\\
        f_{N+1}-f_{N}+(-1)^{N}, & \text{ if }N\ge0.     \end{cases}
 \end{align*}
\end{proposition}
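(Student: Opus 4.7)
The plan is to reduce this statement to Proposition \ref{hallpoly4} via the derived equivalence between $\bfk Q$-mod and $\coh\X$, where $\X$ is the weighted projective line of domestic type associated with the tame quiver $Q$ as per the table.

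First I would recall that under the canonical derived equivalence
\[G: D^b(\bfk Q\text{-mod})\xrightarrow{\sim} D^b(\coh\X),\]
the preprojective $\bfk Q$-modules are sent to vector bundles (up to shift), while the regular modules are sent to torsion sheaves. Moreover, since the defect on the $\bfk Q$-side corresponds to the negative of the rank on the $\coh\X$-side (in absolute value for the shift-free summands), an indecomposable preprojective $\bfk Q$-module of defect $-2$ corresponds to an indecomposable vector bundle of rank $2$ in $\coh\X$, i.e., an extension bundle $E_L\langle\vx\rangle$. Thus $G(P)\cong E$ and $G(P')\cong E'$ are extension bundles, and $G(R)\cong S$ is an indecomposable torsion sheaf lying in an exceptional tube (the exceptional tubes on both sides are identified as abelian subcategories).

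Next I would invoke the isomorphism $\mathcal{DH}(\bfk Q\text{-mod})\cong\mathcal{DH}(\coh\X)$ of derived Hall algebras from \cite{Cr10}, together with the embedding $\varphi:\mathcal{H}(\mathcal{A})\hookrightarrow\mathcal{DH}(\mathcal{A})$ for hereditary $\mathcal{A}$. The embedding rescales the basis element by the factor $\{X,X\}$, which only depends on higher Ext data. Since $E,E',S$ all lie in $\coh\X$ (concentrated in degree zero, not shifted), the factors $\{E,E\}$, $\{E',E'\}$ and $\{S,S\}$ coincide with the analogous factors for $P,P',R$ under the equivalence, so the ordinary Hall numbers match:
\[ F_{RP'}^P = F_{SE'}^E. \]
I would also verify that the condition $\Hom(P,\mathrm{top\,}R)\ne0$ translates to $\Hom(E,\mathrm{top\,}S)\ne0$ (since the equivalence identifies the exceptional tubes as abelian categories and preserves Hom-spaces in degree zero), and that the Euler form $\langle P',P\rangle$ equals $\langle E',E\rangle$ because $G$ induces an isometry on Grothendieck groups, so the integer $N=\lfloor\frac{1}{2}\langle P',P\rangle\rfloor-1$ is the same in both settings.

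Finally, I would apply Proposition \ref{hallpoly4} directly to $F_{SE'}^E$ to obtain the desired formula. The principal obstacle I anticipate is the careful bookkeeping of the correspondence between objects across the derived equivalence: specifically, one must ensure that the short exact sequences needed for Hall numbers on the $\bfk Q$-side really correspond to short exact sequences (not merely triangles with shifts) on the $\coh\X$-side. This is ensured here because all three objects $P$, $P'$, $R$ lie in a common heart (preprojectives together with regulars), so any triangle connecting them and lying in a single heart is a short exact sequence, and the Riedtmann--Peng-type formulas ensure the Hall numbers are genuinely equal under $\varphi$. Once this transport is in place, the result is immediate from Proposition \ref{hallpoly4}.
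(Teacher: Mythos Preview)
Your proposal is correct and follows essentially the same approach as the paper: both reduce the statement to Proposition~\ref{hallpoly4} via the derived equivalence $D^b(\bfk Q\text{-mod})\cong D^b(\coh\X)$ and the identification of Hall numbers through the embeddings into the derived Hall algebras. The paper's proof adds an explicit computation showing $\lfloor\tfrac{1}{2}\langle P',P\rangle\rfloor=\lfloor\tfrac{n-l_i}{p_i}\rfloor+1$ by expanding $\langle E',E\rangle=1-\langle S,L(\vw)\rangle-\langle S,L(\vx)\rangle$, whereas you handle this more directly by invoking that $G$ induces an isometry on Grothendieck groups so that $\langle P',P\rangle=\langle E',E\rangle$; your argument is cleaner and the explicit calculation in the paper is not strictly necessary.
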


\begin{proof} We only need to deal with the Euler form. In the first case in
Proposition \ref{hallpoly4}, we have
$$\langle E',E\rangle=\langle E,E\rangle-\langle S,E\rangle
=1-\langle S,L(\vw)\rangle-\langle S,L(\vx)\rangle,$$
where $-\langle S,L(\vw)\rangle=\lfloor\frac{n-l_i}{p_i}\rfloor+1$ and
$-\langle S,L(\vw)\rangle-1\le -\langle S,L(\vx)\rangle\le -\langle S,L(\vw)\rangle$.
Hence, $\lfloor\frac{1}{2}\langle P',P\rangle\rfloor=\lfloor\frac{n-l_i}{p_i}\rfloor+1$.
The second case can be treated similarly.
Therefore, we can transform the Hall polynomials in Proposition \ref{hallpoly4} to those
in $\bfk Q\mbox{-mod}$.
\end{proof}

To obtain the Hall polynomials involving preinjective $\bfk Q$-modules, we apply the
following result from \cite{Ruan}, which indicates the relationship between derived
Hall numbers under rotation of triangles.

\begin{lemma} [{\rm \cite[Lem.~2.1]{Ruan}}] \label{rotation}
Let $\ca$ be a finitary abelian category over $\mathbf k$. Then for $X,Y,L\in D^b(\mathcal{A})$, we have
\begin{equation}\label{rotation in Db(A)}
\frac{G_{XY}^L}{a_L\cdot \{L,L\}}=\frac{G_{Y[1]L}^X}{a_X\cdot\{X,X\}}
=\frac{G_{L X[-1]}^Y}{a_Y\cdot \{Y,Y\}}.
\end{equation}
In particular, if $X,Y,L\in\mathcal{A}$, then
\begin{equation}\label{rotation in A}\frac{G_{XY}^L}{a_L}=\frac{G_{Y[1]L}^X}{a_X}
=\frac{G_{LX[-1]}^Y}{a_Y}.
\end{equation}
  \end{lemma}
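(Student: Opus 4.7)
The plan is to derive both equalities by direct computation from the three equivalent expressions for $G_{XY}^L$ already recorded in the excerpt --- the two defining formulas together with the derived Riedtmann--Peng formula --- combined with the elementary observation that the shift functor $[1]$ is an auto-equivalence of $D^b(\mathcal{A})$, so it preserves cardinalities of automorphism groups, of $\Hom$-sets, and hence of the brackets $\{-,-\}$. Conceptually, $G_{XY}^L$ counts (suitably weighted) distinguished triangles of the form $Y\to L\to X\to Y[1]$; rotating this triangle to $L\to X\to Y[1]\to L[1]$ or to $X[-1]\to Y\to L\to X$ suggests that the same data is repackaged by $G_{Y[1]L}^X$ and $G_{LX[-1]}^Y$, up to the normalizing factors $a_?\{?,?\}$.

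For the first equality, I would divide the Riedtmann--Peng formula by $a_L\{L,L\}$ to obtain
\[
\frac{G_{XY}^L}{a_L\{L,L\}}=\frac{|\Hom(X,Y[1])_{L[1]}|\,\{X,Y[1]\}}{a_X a_Y\{X,X\}\{Y,Y\}},
\]
and apply the first defining formula for the derived Hall number to the triple $(Y[1],L,X)$, which gives
\[
G_{Y[1]L}^X=\frac{|\Hom(X,Y[1])_{L[1]}|}{a_{Y[1]}}\cdot\frac{\{X,Y[1]\}}{\{Y[1],Y[1]\}}.
\]
Shift invariance yields $a_{Y[1]}=a_Y$ and $\{Y[1],Y[1]\}=\{Y,Y\}$, so dividing by $a_X\{X,X\}$ reproduces exactly the expression above. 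For the second equality, I would start from the second defining formula for $G_{XY}^L$, giving
\[
\frac{G_{XY}^L}{a_L\{L,L\}}=\frac{|\Hom(Y,L)_X|\,\{Y,L\}}{a_L a_Y\{L,L\}\{Y,Y\}},
\]
and compare with the first defining formula applied to the triple $(L,X[-1],Y)$, noting that $(X[-1])[1]=X$ and hence $\Hom(Y,L)_{(X[-1])[1]}=\Hom(Y,L)_X$; dividing by $a_Y\{Y,Y\}$ again produces a matching fraction.

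For the specialization $X,Y,L\in\mathcal{A}$, I observe that $\Hom_{D^b(\mathcal{A})}(X[i],X)=\Ext^{-i}_{\mathcal{A}}(X,X)=0$ for every $i>0$, so $\{X,X\}=1$, and likewise $\{Y,Y\}=\{L,L\}=1$; combined with $\widetilde{a}_?=a_?$ for objects of $\mathcal{A}$, equation \eqref{rotation in Db(A)} immediately collapses to \eqref{rotation in A}.

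The anticipated difficulty is entirely notational rather than conceptual: one must carefully track which of the three expressions for $G_{?\,?}^{\,?}$ is being used in each step, and verify that the $\Hom$-subsets $\Hom(-,-)_{?}$ appearing in the two sides of each comparison really do match after the shift is applied (for example, that the cone of a morphism $X\to Y[1]$ is isomorphic to $L[1]$ if and only if, after shifting by $-1$, the corresponding morphism $X[-1]\to Y$ has cone $L$). Once the bookkeeping is set up cleanly, no genuine obstacle remains.
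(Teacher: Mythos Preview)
Your argument is correct. The paper itself does not supply a proof of this lemma; it is simply quoted from \cite{Ruan}. Your approach---reading off each of the three ratios from, respectively, the derived Riedtmann--Peng formula, the first defining expression in \eqref{definition of derived Hall number} applied to $(Y[1],L,X)$, and the first defining expression applied to $(L,X[-1],Y)$, together with the shift-invariance of $\widetilde a_{-}$ and $\{-,-\}$---is exactly the natural direct computation, and the bookkeeping you describe goes through without issue. The specialization to $X,Y,L\in\mathcal{A}$ via $\{X,X\}=\{Y,Y\}=\{L,L\}=1$ is also correct.
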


Let $P$ (resp., $\widetilde{P}$) be an indecomposable module in $\mathscr{P}$ of defect $-1$
(resp., $-2$); $I, I_1, I_2$ (resp., $\widetilde{I}$) indecomposable modules in $\mathscr{I}$
of defect $1$ (resp., $2$); $R, R_1, R_2$ indecomposable regular modules; and $R^h$ (resp., $ R^e$)
an indecomposable regular module from a homogeneous (resp., an exceptional) tube.

By Lemma \ref{rotation} together with Propositions 5.2--5.7 and an easy calculations of
automorphism groups, we obtain the following formulas of Hall polynomials, under the assumption
that the corresponding exact sequences exist.

\begin{proposition} [{\rm \cite[Thms. 7.2, 7.4\, \&\, 7.6]{SS2}}]
 Keep the notations above. Then
\begin{align*}
 F_{IP}^R&=\frac{a_R}{q-1},\qquad F_{I_1R}^{I_2}=1,\\
 F_{IP\oplus R_1}^{ R_2}&=\frac{a_{R_1}a_{R_2}}{a_{P\oplus R_1}}=\frac{a_{R_2}}{(q-1)|\Hom(P,R_1)|},\\
 F_{I\widetilde{P}}^P&=f_{n-1}(q), \text{ for } n=\langle \widetilde{P},P\rangle\ge1,\\
 F_{\widetilde{I}\widetilde{P}}^{R^h}&=\frac{a_{R^h}}{q-1}(f_{dn}-f_{dn-1}+(q^d-1)
 \sum_{t\ge1}q^{d(t-1)}(f_{d(n-2t)}-f_{d(n-2t)-1})),
 \end{align*}
where $R^h$ is of class $dn\delta$ taken from a homogeneous tube $\mathcal{T}_z$
with $\mathrm{deg}\ z=d$, and
\begin{align*}
F_{\widetilde{I}\widetilde{P}}^{R^e}=&\begin{cases}
        \frac{a_{R^e}}{q-1}, & \text{ if }{N=-1};\\
          \frac{a_{R^e}}{q-1}(f_{N+1}-f_{N}+(-1)^{N}),& \text{ if }N\ge0,
\end{cases}
 \end{align*}
for $N=\lfloor-\frac{1}{2}\langle \widetilde{I},\widetilde{P}\rangle\rfloor+1$.
\end{proposition}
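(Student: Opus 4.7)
The plan is to use the isomorphism $\mathcal{DH}(\bfk Q\mbox{-mod}) \cong \mathcal{DH}(\coh\X)$ of derived Hall algebras coming from the derived equivalence $\Phi : D^b(\bfk Q\mbox{-mod}) \to D^b(\coh\X)$, and to apply the rotation identity of Lemma \ref{rotation} in order to remove every preinjective entry. Under the standard tilting that realises this equivalence, an indecomposable preprojective module goes to a line bundle (defect $-1$) or an extension bundle (defect $-2$), a regular module goes to a torsion sheaf, and an indecomposable preinjective module $I$ of positive defect goes to a shifted vector bundle $V[1]$; in particular $\Phi(I[-1])$ lies again inside $\coh\X$.

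For each of the six formulas, the execution runs as follows. First, note that in a hereditary heart the embedding $\mathcal{H}(\ca) \hookrightarrow \mathcal{DH}(\ca)$ of Section 5 specialises to $F_{AB}^M = G_{AB}^M$ for triples inside the heart, since all $\{X,X\}$ factors are trivial. Second, starting from the assumed short exact sequence in $\bfk Q\mbox{-mod}$, apply Lemma \ref{rotation} to the associated triangle to shift every preinjective entry by $[-1]$. Third, transport the resulting derived Hall number through $\Phi$ into $D^b(\coh\X)$; by the shift observation its three entries will all lie in $\coh\X$, so it coincides with the corresponding ordinary Hall number. Fourth, read off its value from Propositions \ref{hallpoly1}, \ref{hallpoly1'}, \ref{hallpoly1''}, Theorem \ref{hallpoly2}, or Propositions \ref{hallpoly3}, \ref{hallpoly4}. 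Finally, collect the automorphism-group factors produced by the rotation, using $a_M = q-1$ for any exceptional indecomposable module or sheaf, and the standard uniserial formula for regular torsion summands.

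As a prototype, for $F_{IP}^R$ one has $G_{IP}^R/a_R = G_{RI[-1]}^P/a_P$; the right-hand side transports via $\Phi$ to a Hall number in $\coh\X$ of the type classified by Proposition \ref{hallpoly1}, with value $1$, and together with $a_P = q-1$ this yields exactly $F_{IP}^R = a_R/(q-1)$. Identical rotations turn $F_{I_1R}^{I_2}$ and $F_{IP\oplus R_1}^{R_2}$ into instances of Propositions \ref{hallpoly1} and \ref{hallpoly1'}, whereas $F_{I\widetilde P}^P$, $F_{\widetilde I\widetilde P}^{R^h}$, $F_{\widetilde I\widetilde P}^{R^e}$ are reduced to Theorem \ref{hallpoly2} and Propositions \ref{hallpoly3}, \ref{hallpoly4} respectively.

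The main obstacle will be correctly translating the Euler-form parameters through the rotation and the equivalence. Since $\langle X[-1], Y\rangle = -\langle X, Y\rangle$ in the derived category, the invariant $n = \langle \widetilde P, P\rangle$ on the representation side must be matched with the invariant $\langle L_I, L_P\rangle = n-1$ on the sheaf side, using $[P] = [\widetilde P] + [I]$ and the exceptionality of $\widetilde P$; analogously, $\langle \widetilde I, \widetilde P\rangle$ converts into the $\langle E',E\rangle$ whose half-floor controls the parameter $N$ in Proposition \ref{hallpoly4}. Verifying these translations, together with a routine separation of the homogeneous and exceptional-tube cases in the last formula, is the essential bookkeeping required to complete the argument.
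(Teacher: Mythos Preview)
Your proposal is correct and follows essentially the same route as the paper. The paper's own proof is a single sentence: it invokes Lemma~\ref{rotation} together with Propositions~5.2--5.7 (the reformulations of Propositions~\ref{hallpoly1}, \ref{hallpoly1'}, \ref{hallpoly1''}, Theorem~\ref{hallpoly2}, and Propositions~\ref{hallpoly3}, \ref{hallpoly4} on the quiver side) and ``an easy calculation of automorphism groups,'' which is precisely the rotation-then-transport-then-read-off strategy you outline; your worked example for $F_{IP}^R$ and your remarks on the Euler-form bookkeeping fill in exactly the details the paper leaves implicit.
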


\bigskip

\noindent {\bf Acknowledgements.}\quad
Bangming Deng was supported by the National Natural Science Foundation of China
(Grant No. 1203100). Shiquan Ruan was partially supported by the Natural Science
Foundation of Xiamen (Grant No. 3502Z20227184),
Fujian Provincial Natural Science Foundation of China (Grant Nos. 2024J010006 and 2022J01034),
the National Natural Science Foundation of China (Grant No. 12271448), and the Fundamental
Research Funds for Central Universities of China (Grant No. 20720220043).

\bigskip

\end{document}